\numberwithin{equation}{section}
\theoremstyle{plain}
\newtheorem{theorem}{Theorem}[section]
\newtheorem{lemma}[theorem]{Lemma}
\newtheorem{proposition}[theorem]{Proposition}
\newcommand{\inth}{\textstyle \int}
\theoremstyle{definition}
\newtheorem{definition}[theorem]{Definition}
\newtheorem{example}[theorem]{Example}
\newtheorem{remark}[theorem]{Remark}
\newtheorem{question}[theorem]{Question}
\newcommand{\tails}{\ensuremath{\operatorname{tails}}}
\let\c@equation\c@theorem  
\DeclareMathOperator{\gldim}{gldim} 
\DeclareMathOperator{\Ext}{Ext}  
\DeclareMathOperator{\grmod}{grmod}
\DeclareMathOperator{\tors}{tors}
\DeclareMathOperator{\injdim}{injdim}
\DeclareMathOperator{\GKdim}{GKdim}
\DeclareMathOperator{\End}{End}
 \DeclareMathOperator{\Hom}{Hom}
\def\Pty{{\sf p}}
\begin{document}
\title[Auslander's Theorem for down-up algebras]
{Auslander's Theorem for group coactions on \\
noetherian graded down-up algebras}

\author{J. Chen, E. Kirkman and J.J. Zhang}

\address{Chen: School of Mathematical Sciences,
Xiamen University, Xiamen 361005, Fujian, China}

\email{chenjianmin@xmu.edu.cn}

\address{Kirkman: Department of Mathematics,
P. O. Box 7388, Wake Forest University, Winston-Salem, NC 27109}

\email{kirkman@wfu.edu}

\address{Zhang: Department of Mathematics, Box 354350,
University of Washington, Seattle, Washington 98195, USA}

\email{zhang@math.washington.edu}

\begin{abstract}
We prove a version of a theorem of Auslander for finite group coactions 
on noetherian graded down-up algebras.
\end{abstract}

\subjclass[2010]{16E10, 16E65, 16S40, 16T05, 16W22}




\keywords{Noetherian graded down-up algebra, group coaction, 
Auslander's theorem, trivial homological (co)determinant, pertinency}


\maketitle


\setcounter{section}{-1}

\section{Introduction}
\label{xxsec0}
Maurice Auslander \cite{Au} proved that if $G$ is a finite subgroup of 
${\rm{GL}}_n(\Bbbk)$, containing no pseudo-reflections (e.g. subgroups 
of ${\rm{SL}}_n(\Bbbk)$), acting linearly on the commutative polynomial
ring $A= \Bbbk[x_1, \dots, x_n]$, with fixed subring $A^G$, then the 
natural map from the skew group algebra $A*G$ to $\End_{A^G}(A)$ is an 
isomorphism of graded algebras. This theorem is the main ingredient in 
the McKay correspondence, relating representations of $G$ and $A^G$-modules.
Noncommutative versions of this theorem of Auslander \cite{BHZ1, BHZ2} are 
an important ingredient in establishing a noncommutative McKay correspondence. 
One of the main open questions concerning a noncommutative version of 
Auslander's theorem is the following conjecture that was stated in 
\cite[Conjecture 0.4]{BHZ1} and \cite[Conjecture 0.2]{CKWZ2}, where the 
condition that the homological determinant of the $H$-action is trivial 
generalizes the result for group actions by subgroups of SL$_n(\Bbbk)$:\\

\noindent
{\it 
Let $A$ be a connected graded noetherian Artin-Schelter regular algebra
\cite{AS} and $H$ be a semisimple Hopf algebra acting on $A$ inner-faithfully and 
homogeneously. If the homological determinant of the $H$-action on $A$ 
is trivial, then there is a natural graded algebra isomorphism
$$A\#H \cong \End_{A^H}(A).$$
}

By \cite[Theorem 0.3]{CKWZ2} the above conjecture holds when $A$ has 
global dimension two, which is one of the main results in \cite{CKWZ2}. 
It is natural to search for a proof of this conjecture for global 
dimension three (or higher). The paper \cite{BHZ2} started this program 
by showing that the above conjecture holds for certain finite group 
actions on noetherian graded down-up algebras, which are 
Artin-Schelter regular algebras of global dimension three 
\cite[Theorem 0.6]{BHZ2}. Some interesting partial results concerning 
Auslander's theorem have been proven in \cite{BHZ1, BHZ2, GKMW, HZ, MU}. 
The goal of this paper is to verify the conjecture for finite group 
coactions on Artin-Schelter regular down-up algebras [Theorem \ref{xxthm0.1}]. 
The idea of the proof is to use the pertinency introduced in \cite{BHZ1} 
that has been one major tool for proving the noncommutative Auslander theorem.

Throughout the paper, let $\Bbbk$ be a base field of characteristic 
zero, and all objects are over $\Bbbk$.

Down-up algebras were introduced in 1998 by Benkart-Roby in \cite{BR}, and,
since then, these algebras have been studied extensively. Noetherian 
graded down-up algebras are Artin-Schelter regular algebras of global 
dimension three with two generators by a result of \cite{KMP}. Let 
$\alpha$ and $\beta$ be two scalars in $\Bbbk$. The graded down-up 
algebra, denoted by ${\mathbb D}(\alpha,\beta)$, is generated by two 
elements $d$ and $u$ and subject to two relations
\begin{align}
\label{E0.0.1}\tag{E0.0.1}
d^2 u &= \alpha dud +\beta u d^2,\\
\label{E0.0.2}\tag{E0.0.2}
d u^2 &= \alpha udu +\beta u^2 d.
\end{align}
This algebra is noetherian if and only if $\beta\neq 0$, and in this 
paper we always assume that $\beta\neq 0$. When $\alpha=0$, we use 
${\mathbb D}_\beta$ instead of ${\mathbb D}(0,\beta)$. The groups of  graded
algebra automorphisms of the down-up algebras were computed in \cite{KK}. 
Recently, the invariant theory of graded down-up algebras under finite 
group actions and coactions has been studied in \cite{KKZ2, CKZ, HZ}.

In a general setting, let $H$ be a semisimple Hopf algebra and let 
$K$ be its $\Bbbk$-linear dual. Then $K$ is also a semisimple 
Hopf algebra. It is well-known that a left $H$-action on an algebra 
$A$ is equivalent to a right $K$-coaction on $A$. 

Suppose $H$ is a semisimple Hopf algebra with integral $\inth$, and 
$A$ is an algebra with $\GKdim A<\infty$. Here $\GKdim A$ denotes 
the Gelfand-Kirillov dimension of $A$. If $H$ acts on $A$, by 
\cite[Definition 0.1]{BHZ1}, the {\it pertinency} of the $H$-action 
on $A$ is defined to be
\begin{equation}
\label{E0.0.3}\tag{E0.0.3}
\Pty(A,H)=\GKdim A-\GKdim ((A\# H)/I)
\end{equation}
where $I$ is the 2-sided ideal of $A\# H$ generated by $1\# \inth$. 
Define the fixed subring of the $H$-action to be
$$A^H=\{ a\in A\mid h\cdot (a)=\epsilon(h) a, \forall h\in H\}$$
where $\epsilon$ is the counit of $H$.
For any algebra $A$ with $H$-action, there is a natural algebra 
homomorphism $\phi: A\# H\to \End_{A^H}(A)$ which sends $a\# h$ to 
an $A^H$-endomorphism of $A$:
$$\phi(a\# h): x\mapsto a (h\cdot (x)), \;\; \forall \; x \in A.$$
By \cite[Theorem 0.3]{BHZ1}, if $A$ is a noetherian, connected graded, 
Artin-Schelter regular and Cohen-Macaulay domain of $\GKdim \geq 2$,
then $\Pty(A,H)\geq 2$ if and only if the canonical map
\begin{equation}
\label{E0.0.4}\tag{E0.0.4}
\phi: \quad A\# H\longrightarrow \End_{A^H}(A)
\end{equation}
is an isomorphism. For simplicity, if $\phi$ is an isomorphism, 
we say that $(A,H)$ has the {\it isom-property}.

In this paper we are interested in the case when $H$ is 
$\Bbbk^G:=\Hom_{\Bbbk}(\Bbbk G, \Bbbk)$, or equivalently, $K$ is the 
group algebra $\Bbbk G$ for some finite group $G$, and when $A$ is 
the noetherian graded down-up algebra $\mathbb{D}(\alpha, \beta)$. Our main result 
is

\begin{theorem}
\label{xxthm0.1}  
Let $H:=\Bbbk^G$ act on $A:=\mathbb{D}(\alpha,\beta)$ homogeneously and 
inner faithfully, where $\beta\neq 0$. If the action has trivial 
homological determinant, then the pertinency $\Pty(A, H)\geq 2$. As a 
consequence, Auslander's theorem holds, namely, there is a 
natural isomorphism of graded algebras
$$\phi: \quad A\# H\cong \End_{A^H}(A).$$
\end{theorem}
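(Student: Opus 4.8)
The key reduction is already given in the excerpt: since $A=\mathbb{D}(\alpha,\beta)$ with $\beta\neq 0$ is a noetherian, connected graded, Artin--Schelter regular Cohen--Macaulay domain of GK-dimension three, by \cite[Theorem 0.3]{BHZ1} it suffices to prove the single inequality $\Pty(A,H)\geq 2$, i.e. that $\GKdim\bigl((A\#H)/I\bigr)\leq 1$, where $I=(1\#\inth)$. Equivalently, passing to the dual picture with $K=\Bbbk G$, one works with the smash product $A\#\Bbbk^G$ and the idempotent $e=1\#\inth$; since $H=\Bbbk^G$ is semisimple and commutative, $A\#\Bbbk^G\cong \bigoplus_{g\in G} A_g$ as a graded vector space (the $g$-component of the $G$-grading on $A$ induced by the coaction), and the pertinency can be recast in terms of this $G$-grading.

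**Main steps.** First I would fix a normal form for the generators under the $\Bbbk^G$-action: because the action is homogeneous and inner faithful on the degree-one piece $A_1=\Bbbk d\oplus \Bbbk u$, and because $\Bbbk^G$-actions diagonalize (the group $\widehat{G}$ of characters acts), one may choose coordinates so that $d,u$ are $G$-homogeneous, or are interchanged by $G$, and classify the possible finite groups $G$ and cocycle data exactly as in the automorphism computation of \cite{KK} and the invariant-theory analysis of \cite{KKZ2, CKZ, HZ}. The trivial-homological-determinant hypothesis then pins down which characters can occur; concretely it forces the product of the eigenvalue characters of $d$ and $u$ (suitably twisted by the relation) to be trivial, which is the coaction analogue of landing in $\mathrm{SL}$. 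Second, for each surviving case I would exhibit enough elements of $I$ to cut the GK-dimension down: the standard device is to show that $e$ together with a homogeneous normal (or normalizing) element $\Omega\in A^H$ — for down-up algebras the natural central-type element of degree related to $\beta$ — generate an ideal modulo which the quotient is finite over a commutative affine algebra of Krull dimension $\leq 1$. One computes $\GKdim$ of the quotient ring by localizing at $\Omega$ or by a filtration argument, reducing to a (possibly skew) polynomial ring in one variable.

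**Expected main obstacle.** The hard part will be the case analysis together with the GK-dimension estimate in the "degenerate" down-up algebras, particularly ${\mathbb D}_\beta={\mathbb D}(0,\beta)$ and the values of $(\alpha,\beta)$ for which the associated quadratic form has repeated roots: there the Nakayama automorphism and the structure of $A^H$ are most delicate, and showing $\GKdim((A\#H)/I)\leq 1$ requires producing an explicit element of $I$ (beyond the obvious central one) whose vanishing forces finiteness. I would handle this by a direct computation inside $A\#\Bbbk^G$ using the relations \eqref{E0.0.1}--\eqref{E0.0.2}: write down $\inth$ as $\frac{1}{|G|}\sum_{g} g$ in the dual basis, compute the image of a spanning set of monomials $d^iu^j$ under $\phi(1\#\inth)$, and identify a monomial relation forcing, say, $d^N$ or $u^N$ (or a product $du$) into $I$ for some $N$, after which a Hilbert-series/GK bound finishes the proof. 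The inner-faithfulness hypothesis is what guarantees $G$ is genuinely finite and does not collapse, and trivial homological determinant is what rules out pseudo-reflection-type characters that would otherwise obstruct the bound.
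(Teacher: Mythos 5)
Your high-level skeleton matches the paper's: reduce via \cite[Theorem 0.3]{BHZ1} to showing $\Pty(A,\Bbbk^G)\geq 2$, classify the possible coactions, produce elements of the ideal $I$, and bound the GK-dimension of the quotient. But as written there is a genuine gap: everything that makes the theorem hard is deferred to ``a direct computation'' whose proposed mechanism would not work. First, the classification step is not just diagonalizing characters. A $\Bbbk^G$-action is a $G$-grading, and while one can always find $G$-homogeneous degree-one generators, those generators need not satisfy the standard down-up relations; by \cite[Proposition 1.12]{CKZ} (Lemma \ref{xxlem2.1}) the genuinely new cases are the presentations ${\mathbb{F}}$ ($x^3=yxy$, $y^3=xyx$), ${\mathbb{H}}$, and $\mathbb{D}(\alpha,-1)$ with $\alpha\neq 2$ in the basis $x=\frac12(d+u)$, $y=\frac12(d-u)$ --- and the cyclic/abelian non-diagonal case must be handled separately since that classification assumes $G$ non-cyclic. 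Your ``expected main obstacle'' ($\mathbb{D}_\beta=\mathbb{D}(0,\beta)$ with repeated roots) misidentifies the difficulty: with $d,u$ homogeneous that is the easy case (Subsection \ref{xxsec2.1}); the hard cases are the non-diagonal abelian actions on $\mathbb{D}(\alpha,-1)$ (Lemma \ref{xxlem2.11}, especially $G=T$, where explicit degree-$7$ and degree-$9$ monomials in $J$ and special parameter values $a^2=1,2$ must be treated) and the algebra ${\mathbb{F}}$ (Subsection \ref{xxsec2.4}, which needs the commuting elements $V_i,W_j$ and the normality of $x^4$).

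Second, your device for the GK bound is insufficient. Forcing a single monomial such as $d^N$ or $u^N$ into $I$ does not give $\GKdim\leq 1$: by the Diamond Lemma normal form $d^a(ud)^bu^c$, killing $u^N$ alone still leaves a quotient of GK-dimension $2$; one needs \emph{two} independent leading-term relations, of the shape in Lemma \ref{xxlem1.10} (e.g.\ both $u^t$ and $d^s(ud)^i$ reduced to lower terms). Likewise the ``normal element $\Omega\in A^H$ plus localization'' idea is not a substitute: the quotient $(A\#H)/I$ is not controlled by inverting a central-type element of $A$, and in the hard cases no such single element does the job. The actual engine of the proof is Lemma \ref{xxlem1.8}/\ref{xxlem1.9} (a product $f_n\cdots f_1$ lies in $I$ once the $G$-degrees of its right partial products sweep out all of $G$), applied case by case to carefully chosen monomials, followed by the two-relation GK estimate. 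Your proposal does not supply these elements, nor an argument that your alternative would produce them, so the central inequality $\GKdim((A\#H)/I)\leq 1$ remains unproved in precisely the cases where the theorem has content.
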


Theorem \ref{xxthm0.1} fails without the hypothesis of ``trivial 
homological determinant'', see Remark \ref{xxrem1.6}(2).  
Theorem \ref{xxthm0.1} suggests there is a McKay correspondence for 
down-up algebras $\mathbb{D}(\alpha,\beta)$; it follows from 
\cite[Theorem A]{CKWZ2} that when Auslander's theorem holds, there 
are bijections between several categories of modules, e.g. simple 
left $H$-modules and indecomposable direct summands of $A$ as a 
left $A^H$-modules.  The paper \cite{QWZ} shows that whenever 
Auslander's theorem holds one can view $A\# H$ as a generalized 
noncommutative crepant resolution (NCCR) of $A^H$, and when 
$A^H$ is a central subalgebra of $A\# H$, $A\# H$ is an NCCR of 
$A^H$. 

The paper is organized as follows:  Section 1 contains some preliminary 
results, Section 2 contains the proof of Theorem \ref{xxthm0.1}, and 
Section 3 contains some examples.


\section{Preliminaries}
\label{xxsec1}
In this section we recall some basic definitions and make some comments.
We will omit the definition of Artin-Schelter Gorensteinness and
Artin-Schelter regularity \cite{AS} since these can be found in many 
other papers and we will not need these in the proof of the main result. 
As mentioned in the introduction, noetherian graded down-up algebras
are Artin-Schelter regular of global dimension three.

We introduce a temporary concept. For a graded module $C$ and an integer
$w$, the $w$th shift of $C$, denoted by $C(w)$, is defined by
$C(w)_m = C_{w+m}$ for all $m\in {\mathbb Z}$. 

\begin{definition}
\label{xxdef1.1} Let $H$ be a semisimple Hopf algebra acting on a 
connected graded algebra $A$ homogeneously and inner faithfully. 
Decompose $A$ into 
\begin{equation}
\label{E1.1.1}\tag{E1.1.1}
(\bigoplus_{i=1}^s A^H(-w_i)) \bigoplus B
\end{equation} 
as a right $A^H$-module 
for some integer $s\geq 1$, where $B$ has no direct summand that is
isomorphic to $A^H(w)$ for some integer $w$. If $B=0$, $H$ is called 
a {\it reflection Hopf algebra} with respect to $A$.  
If $s\geq 2$ (but $B\neq 0$), we say $H$ is a 
{\it fractional-reflection Hopf algebra} with respect to $A$, 
since part (but not all) of $A$ is a graded free $A^H$-module.
\end{definition}

\begin{lemma}
\label{xxlem1.2} 
Suppose $H$ acts on a connected graded algebra $A$ as a reflection
{\rm{(}}or fractional-reflection{\rm{)}} Hopf algebra.  Then:
\begin{enumerate}
\item[(1)]
$(A,H)$ does not have the isom-property.
\item[(2)]
If $A$ is a noetherian Artin-Schelter Gorenstein algebra, 
then the $H$-action on $A$ does not have trivial homological 
determinant.
\end{enumerate}
\end{lemma}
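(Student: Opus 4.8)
My plan is to prove both statements by comparing Hilbert series, exploiting that the hypothesis of Definition \ref{xxdef1.1} supplies at least two graded free summands of $A$ over $A^{H}$. First, some bookkeeping for the decomposition $A=\bigoplus_{i=1}^{s}A^{H}(-w_{i})\oplus B$ of \eqref{E1.1.1}: since $A$ is connected and $A^{H}$ is (via the integral, i.e.\ the Reynolds operator) a graded direct summand of $A_{A^{H}}$, exactly one $w_{i}$---say $w_{1}$---equals $0$ while $w_{i}\ge 1$ for $i\ge 2$, and $B_{0}=0$. We may assume the action is non-trivial, i.e.\ $A^{H}\subsetneq A$ (otherwise inner faithfulness forces $H=\Bbbk$, a vacuous case); then $s\ge 2$ in both cases (by definition in the fractional-reflection case, and because $A=\bigoplus_i A^H(-w_i)$ would otherwise collapse to $A^H$ in the reflection case). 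Set $w:=\max_{i}w_{i}\ge 1$ and $\mu:=\#\{i:w_{i}=w\}\ge 1$.

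For (1), I would argue by contradiction: suppose $\phi$ is an isomorphism of graded algebras. Since $H$ sits in degree $0$, the Hilbert series satisfy $H_{A\#H}(t)=(\dim_{\Bbbk}H)\,H_{A}(t)$, so the same identity must hold for $H_{\End_{A^{H}}(A)}(t)$. On the other hand, applying $\Hom_{A^{H}}(-,A)$ to the decomposition and using the graded left $A$-module isomorphism $\Hom_{A^{H}}(A^{H}(-w_{i}),A)\cong A(w_{i})$ yields
\[
H_{\End_{A^{H}}(A)}(t)=\Bigl(\sum_{i}t^{-w_{i}}\Bigr)H_{A}(t)+H_{\Hom_{A^{H}}(B,A)}(t),
\]
in which the last summand is a genuine Hilbert series and hence has nonnegative coefficients. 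Comparing the coefficient of $t^{-w}$ gives a contradiction: on the left-hand side of the displayed identity it equals $(\dim_{\Bbbk}H)\dim_{\Bbbk}A_{-w}=0$ (as $w\ge 1$), whereas on the right-hand side it is at least $\mu\ge 1$ (each $i$ with $w_{i}=w$ contributes $1$, since $H_{A}$ has constant term $1$).

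For (2), assume the $H$-action has trivial homological determinant. The key input is the established relationship between triviality of the homological determinant and the Gorenstein property of the fixed ring (J{\o}rgensen--Zhang; Kirkman--Kuzmanovich--Zhang): under our hypotheses $A^{H}$ is again noetherian Artin-Schelter Gorenstein, with the \emph{same} Gorenstein parameter as $A$. Feeding this into graded duality---and using that $A$ is maximal Cohen-Macaulay over $A^{H}$---one deduces an isomorphism $\Hom_{A^{H}}(A,A^{H})\cong A$ of graded right $A^{H}$-modules. Applying $\Hom_{A^{H}}(-,A^{H})$ to the decomposition, using $\Hom_{A^{H}}(A^{H}(-w_{i}),A^{H})\cong A^{H}(w_{i})$, and comparing Hilbert series gives
\[
\Bigl(\sum_{i}t^{-w_{i}}-\sum_{i}t^{w_{i}}\Bigr)H_{A^{H}}(t)=H_{B}(t)-H_{\Hom_{A^{H}}(B,A^{H})}(t).
\]
Since $s\ge 2$, the left-hand side is a nonzero Laurent series whose lowest term lies in degree $-w$ with coefficient $\mu>0$; but in degree $-w<0$ the right-hand side has coefficient $-\dim_{\Bbbk}\Hom_{A^{H}}(B,A^{H})_{-w}\le 0$, because $B$ is concentrated in nonnegative degrees---a contradiction. (When $B=0$ the identity already forces $\sum_{i}t^{-w_{i}}=\sum_{i}t^{w_{i}}$, so all $w_{i}=0$ and $A=A^{H}$, which is excluded.)

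I expect part (1) to be essentially a one-line computation once one observes that the partial freeness of $A_{A^{H}}$ forces $H_{\End_{A^{H}}(A)}(t)$ to carry the stray negative-degree term $\mu\,t^{-w}+\cdots$ that $(\dim_{\Bbbk}H)H_{A}(t)$ cannot have. The genuinely substantial point---and the main obstacle---is the external ingredient used in (2): the theorem that a trivial homological determinant forces $A^{H}$ to be Artin-Schelter Gorenstein with unchanged Gorenstein parameter (equivalently, that $\Hom_{A^{H}}(A,A^{H})\cong A$ as graded right $A^{H}$-modules); once this is granted, (2) reduces to the same Hilbert-series bookkeeping. I would also need to check the routine finiteness facts used implicitly---that $A$ is module-finite over $A^{H}$ and that $\End_{A^{H}}(A)$ and $\Hom_{A^{H}}(B,A^{H})$ are finitely generated, so the series above are locally finite and bounded below---and to dispose of the degenerate case $A=A^{H}$ via inner faithfulness.
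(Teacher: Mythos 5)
Your proposal is correct, and it rests on the same two pillars as the paper's proof; the difference is packaging, through Hilbert series, rather than through an explicit map and a bottom-degree comparison of local cohomology. For part (1) the paper simply exhibits the nonzero endomorphism of degree $-w_2<0$ obtained by projecting onto the summand $A^H(-w_2)$, shifting, and including back into $A$, and notes that every nonzero homogeneous element of $A\# H$ has nonnegative degree; your coefficient-of-$t^{-w}$ computation detects exactly the same negative-degree piece of $\End_{A^H}(A)$ (via $\Hom_{A^H}(A^H(-w_i),A)\cong A(w_i)$), but at the cost of the local-finiteness and graded-versus-ungraded $\Hom$ issues you flag at the end, which the explicit-element argument avoids (note that part (1) carries no noetherian hypothesis). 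For part (2) your external input is precisely the paper's: trivial homological determinant forces $A^H$ to be noetherian Artin-Schelter Gorenstein with the same injective dimension and the same AS index $\ell$ as $A$ (\cite[Theorem 3.6 and Lemma 3.5]{KKZ1}). The paper then applies $R^d\Gamma_{\mathfrak{m}}(-)^*$ and compares lowest degrees: $R^d\Gamma_{\mathfrak{m}}(A)^*\cong A(-\ell)$, while its direct summand $R^d\Gamma_{\mathfrak{m}}(A^H(-w_2))^*\cong A^H(-\ell+w_2)$ begins in degree $\ell-w_2<\ell$, a contradiction. Your route is the Matlis dual of this: the isomorphism $\Hom_{A^H}(A,A^H)\cong A$ that you single out as the main obstacle is exactly what graded local duality yields from $A$ being maximal Cohen--Macaulay over $A^H$ together with equality of the Gorenstein parameters, i.e.\ it is supplied by the same sources the paper cites (\cite[Lemma 3.5]{RRZ} plus the change-of-rings for local cohomology from \cite{AZ}); two harmless caveats are that this duality holds only up to a Nakayama-automorphism twist (irrelevant for Hilbert series) and that, as you note, one should justify finite generation of $A$ over $A^H$ so the graded Homs are locally finite. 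Your dismissal of the degenerate case $A=A^H$ matches the paper's implicit assumption $s\ge 2$ and is fine. Net effect: you trade the paper's very short element-wise and bottom-degree arguments for a uniform Hilbert-series identity, obtaining a correct, somewhat more computational rendering of the same proof.
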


\begin{proof} (1) Since $H$ is a fractional-reflection Hopf algebra, 
$s\geq 2$ in \eqref{E1.1.1}. We write $A=A^H\oplus A^H(-w_2) \oplus C$ 
where $C$ is a right $A^H$-module. Note that $w_2$ is necessarily 
positive since $A$ is connected graded. There is a homogeneous 
$A^H$-module map of degree $-w_2$:
$$
A\xrightarrow{pr_{_{A^{H}(-w_2)}}} A^H(-w_2) 
\xrightarrow{shift\;  by\; degree\; w_2} A^H 
\xrightarrow{inclusion} A.
$$
Then $\End_{A^H}(A)$ has a nonzero element of negative degree. On the 
other hand, every nonzero homogeneous element in $A\# H$ has 
nonnegative degree. Therefore $A\# H\not\cong\End_{A^H}(A)$.

(2) We now assume that $A$ is noetherian and Artin-Schelter 
Gorenstein. If the $H$-action on $A$ has trivial homological 
determinant, then, by \cite[Theorem 3.6]{KKZ1}
and the proof of \cite[Lemma 3.5(d)]{KKZ1}, we have
\begin{enumerate}
\item[(a)]
$A^H$ is noetherian and Artin-Schelter Gorenstein, 
\item[(b)]
$\injdim A=\injdim A^H=:d$, and 
\item[(c)]
the AS indices of $A$ and $A^H$ are the same, denoted by $\ell$. 
\end{enumerate}
Let $\mathfrak{m}$ be the graded maximal ideal of $A^H$. We consider the local 
cohomology $R^d\Gamma_{\mathfrak{m}}(A)^*$ as in \cite{AZ, KKZ1}. Since $A^H(-w_2)$ is 
a direct summand of $A$ (as a right $A^H$-module), 
$R^d\Gamma_{\mathfrak{m}}(A^H(-w_2))^*$ is a direct summand of $R^d\Gamma_{\mathfrak{m}}(A)^*$. 
If both $A$ and $A^H$ are Artin-Schelter Gorenstein, by \cite[Lemma 3.5]{RRZ},
$$R^d\Gamma_{\mathfrak{m}}(A)^*\cong A(-\ell)\quad {\text{and}}
\quad R^d\Gamma_{\mathfrak{m}}(A^H(-w_2))^*
\cong A^H(-\ell+w_2).$$ The lowest degree of
nonzero element in $R^d\Gamma_{\mathfrak{m}}(A^H(-w_2))^*$ is $\ell-w_2$ 
and the lowest degree of
nonzero element in $R^d\Gamma_{\mathfrak{m}}(A)^*$ is $\ell$.
Since $w_2$ is positive, this is impossible. Therefore the $H$-action on $A$
does not have trivial homological determinant.
\end{proof}

\begin{remark}
\label{xxrem1.3} Lemma \ref{xxlem1.2}(2) is a generalization 
of \cite[Theorem 2.3]{CKWZ1}.
\end{remark}

The definition of maximal Cohen-Macaulay modules was extended to this context 
in \cite[Definition 3.5]{CKWZ3}.

\begin{proposition}
\label{xxpro1.4}
Let $A$ be connected graded and suppose that $(A, \Bbbk^G)$ has the 
isom-property. Write $A=\bigoplus_{g\in G} A_g$.  If $g\neq h$,
then $A_g$ is not isomorphic to $A_h(w)$ for any $w\in {\mathbb Z}$. 
As a consequence, if $A$ is noetherian Artin-Schelter Gorenstein, 
there are at least $|G|$ non-isomorphic graded maximal 
Cohen-Macaulay modules over $A^{co\; G}$, up to degree shift.
\end{proposition}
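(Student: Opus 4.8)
\emph{Proof proposal.}
The plan is to read everything off the corner decomposition of $A\#\Bbbk^G$ induced by the orthogonal idempotents $\epsilon_g:=1\#p_g$, where $\{p_g\}_{g\in G}$ is the dual basis of $\Bbbk^G$. Recall the standard dictionary: a $\Bbbk^G$-action on $A$ is the same as a $G$-grading $A=\bigoplus_{g\in G}A_g$ (the one coming from the corresponding $\Bbbk G$-coaction), with $A^{co\,G}=A^H=A_e$, and $p_g$ acting on $A$ as the projection $e_g\colon A\to A_g$. Thus the $\epsilon_g$ form a complete set of orthogonal idempotents of $A\#\Bbbk^G$ with $\phi(\epsilon_g)=e_g$. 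Since $(A,\Bbbk^G)$ has the isom-property, for each $g$ the map $\phi$ restricts to an isomorphism of graded left $A\#\Bbbk^G$-modules
$$
P_g\ :=\ (A\#\Bbbk^G)\,\epsilon_g\ \xrightarrow{\ \sim\ }\ \End_{A^H}(A)\,e_g\ \cong\ \Hom_{A^H}(A_g,A),
$$
the last isomorphism (restriction of an $A^H$-endomorphism of $A$ to $A_g$) being canonical and left $\End_{A^H}(A)$-linear. In particular the graded isomorphism type of $P_g$ --- equivalently that of $\Hom_{A^H}(A_g,A)$ --- determines that of $A_g$ as a graded $A^H$-module up to shift.

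For the first statement, suppose for contradiction that $A_g\cong A_h(w)$ as graded right $A^H$-modules for some $w\in{\mathbb Z}$ and some $g\neq h$. Precomposing with this isomorphism yields a left $\End_{A^H}(A)$-linear graded isomorphism $\Hom_{A^H}(A_g,A)\cong\Hom_{A^H}(A_h,A)(-w)$, and transporting it through $\phi$ gives a graded isomorphism $P_g\cong P_h(-w)$ of left $A\#\Bbbk^G$-modules. An isomorphism between two graded projectives of the form $Re_g$ and $Re_h(v)$ is given by a pair of homogeneous elements $a\in\epsilon_g(A\#\Bbbk^G)\epsilon_h$ and $b\in\epsilon_h(A\#\Bbbk^G)\epsilon_g$ with $ab=\epsilon_g$ and $ba=\epsilon_h$. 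A direct computation of the corners gives $\epsilon_g(A\#\Bbbk^G)\epsilon_h=A_{gh^{-1}}\#p_h$, identifies the product $\epsilon_g(A\#\Bbbk^G)\epsilon_h\times\epsilon_h(A\#\Bbbk^G)\epsilon_g\to\epsilon_g(A\#\Bbbk^G)\epsilon_g$ with the multiplication $A_{gh^{-1}}\times A_{hg^{-1}}\to A_e$ of $A$, and identifies $\epsilon_g$ with $1\in A_e=A^H$. Hence $ab=\epsilon_g$ becomes $ab=1$ in $A$ with $a$ a nonzero homogeneous element of $A_{gh^{-1}}$ and $b$ a homogeneous element of $A_{hg^{-1}}$, so $\deg a+\deg b=0$. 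But $g\neq h$ forces $gh^{-1}\neq e$, and since $A$ is connected graded with $A_0=\Bbbk\subseteq A_e$ we have $(A_{gh^{-1}})_0=0$; thus $\deg a\geq 1$ and $\deg b=-\deg a<0$, contradicting $b\in A$. This proves the first assertion.

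For the consequence, assume in addition that $A$ is noetherian Artin-Schelter Gorenstein. Then $A^H$ is noetherian and $A$ is a finitely generated graded $A^H$-module (standard for semisimple Hopf actions on noetherian algebras), and $A$ is a maximal Cohen-Macaulay $A^H$-module: being Artin-Schelter Gorenstein, $A$ is Cohen-Macaulay over itself, and since $A/\mathfrak{m}_{A^H}A$ is finite dimensional the local cohomology of $A$ over $A^H$ at the graded maximal ideal coincides with its local cohomology over $A$, hence is concentrated in degree $\GKdim A=\GKdim A^H$ (see \cite{CKWZ3}). Each $A_g$ is a direct summand of $A$ as a graded $A^H$-module, so it is again maximal Cohen-Macaulay, and it is nonzero because $\phi$ is injective while $\phi(\epsilon_g)=e_g\neq0$. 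By the first part, the $|G|$ modules $A_g$ are pairwise non-isomorphic up to degree shift, which gives the claim.

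The only genuinely delicate point is the corner computation in the first part and its compatibility with $\phi$: one must line up the idempotent conventions, the direction of $\phi$, and the degree shift so that a hypothetical isomorphism $P_g\cong P_h(-w)$ forces an equation $ab=1$ in $A$ with $a$ of strictly positive degree. Everything else --- the $\Bbbk^G$/$G$-grading dictionary, the identification $P_g\cong\Hom_{A^H}(A_g,A)$, and the maximal Cohen-Macaulay bookkeeping in the second part --- is routine.
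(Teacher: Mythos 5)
Your proposal is correct, but it takes a genuinely different route from the paper's. The paper argues directly with the two graded algebras: if $A_g\cong A_h(w)$ with $w\neq 0$, then $\End_{A^{co\,G}}(A)$ contains a nonzero element of negative degree while $A\#\Bbbk^G$ is nonnegatively graded; if $w=0$, the degree-zero part of $\End_{A^{co\,G}}(A)$ contains a noncommutative $2\times 2$ matrix algebra while the degree-zero part of $A\#\Bbbk^G$ is the commutative algebra $\Bbbk^G$ --- either way no graded isomorphism can exist. You instead pass through the corner decomposition by the idempotents $1\#p_g$, identify $(A\#\Bbbk^G)(1\#p_g)$ with $\Hom_{A^H}(A_g,A)$ via $\phi$, and reduce both cases at once to the impossibility of $ab=1$ with $a$ homogeneous of positive degree in a connected graded ring (your corner computation $\epsilon_g(A\#\Bbbk^G)\epsilon_h=A_{gh^{-1}}\#p_h$ and the product identification are correct). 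The paper's argument is shorter and needs only the coarse graded structure of the smash product; yours is more structural, avoids the case split on $w$, uses the specific map $\phi$ rather than mere abstract graded isomorphism, and has the side benefit of making explicit that the isom-property forces every $A_g\neq 0$ (injectivity of $\phi$ applied to $1\#p_g$), a point the paper leaves inside ``the consequence is clear.'' Two small remarks: the sentence claiming that the graded isomorphism type of $P_g$ \emph{determines} that of $A_g$ is neither needed nor justified by what you write (your argument only uses the implication $A_g\cong A_h(w)\Rightarrow P_g\cong P_h(-w)$, which is fine); and in the maximal Cohen--Macaulay bookkeeping the local cohomology of an Artin-Schelter Gorenstein algebra is concentrated in degree equal to its injective dimension (the degree used in the cited definition), not automatically $\GKdim A$ --- a cosmetic slip in a step the paper itself dismisses as clear.
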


\begin{proof} Let $B=A^{co\; G}$. Suppose to the contrary that 
$A_g\cong A_h(w)$ for some $g\neq h$. If $w\neq 0$, then 
$\End_{B}(A)$ has an element of negative degree. So $A\# 
\Bbbk^G\not\cong \End_{B}(A)$, a contradiction. If $w=0$, 
then the degree zero part of $\End_{B}(A)$ contains a 
$2\times 2$ matrix algebra which is not commutative. However 
the degree 0 part of $A\# \Bbbk^G$ is $\Bbbk^G$, which is 
commutative. Therefore $A\# \Bbbk^G\not\cong \End_{B}(A)$, a 
contradiction. Therefore $A_g$ is not isomorphic to $A_h(w)$ 
if $g\neq h$.

The consequence is clear.
\end{proof}

The homological (co)determinant is defined in \cite{KKZ1}. 
We need some facts about the homological (co)determinant
of group coactions on down-up algebras. 
Suppose that $\mathbb{D}_{\beta}$ is $G$-graded with $\deg_G d=g_1$ and 
$\deg_G u=g_2$ (or equivalently, $G$ coacts on $\mathbb{D}_{\beta}$).
Assume that the $G$-coaction on $\mathbb{D}_{\beta}$ is inner-faithful, which 
is equivalent to the condition that $G$ is generated by $g_1$ and 
$g_2$, in this case.

\begin{lemma}
\label{xxlem1.5}
Retain the above notation.
The homological {\rm{(}}co{\rm{)}}determinant of the $\Bbbk^G$-action 
{\rm{(}}or $G$-coaction{\rm{)}} on $\mathbb{D}_{\beta}$ is $g_1^2 g_2^2$, 
and is trivial if and only if $g_1^2 g_2^2=1$, where $1$ is the unit 
of $G$.
\end{lemma}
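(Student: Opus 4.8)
The plan is to compute the homological codeterminant of the $G$-coaction on $\mathbb{D}_\beta$ by exhibiting an explicit Koszul-type minimal free resolution of $\Bbbk$ over $A=\mathbb{D}_\beta$ and tracking the $G$-grading through it. Since $\mathbb{D}_\beta$ is Artin--Schelter regular of global dimension three with two degree-one generators $d,u$ (with $\deg_G d=g_1$, $\deg_G u = g_2$) and two cubic relations, its minimal free resolution of $\Bbbk$ has the shape
\[
0\to A(-4)\to A(-3)^{\oplus 2}\to A(-1)^{\oplus 2}\to A\to \Bbbk\to 0,
\]
and the point is to determine the $G$-degree of the rank-one module in homological degree three. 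First I would write down the first syzygy map $A(-1)^{\oplus 2}\to A$ as $(d,u)$, so the two generators of the module $A(-1)^{\oplus 2}$ carry $G$-degrees $g_1$ and $g_2$. Next, the two relations \eqref{E0.0.1}--\eqref{E0.0.2} are homogeneous of $G$-degree $g_1^2 g_2$ and $g_1 g_2^2$ respectively (reading off from $d^2u$ and $du^2$), which pins down the $G$-degrees of the two generators in homological degree two. Finally, the last syzygy: the single relation among the relations (the ``Koszul dual'' top class) has internal degree $4$, and by following the Koszul complex structure of a down-up algebra its $G$-degree is $g_1^2 g_2^2$. Then by the definition of homological codeterminant (the $G$-degree by which the coaction twists $R^3\Gamma_{\mathfrak m}(A)^*\cong A(-4)$, equivalently the $G$-degree of the top term of the resolution of $\Bbbk$, cf.\ \cite{KKZ1}), we get $\mathrm{hcodet} = g_1^2 g_2^2$.

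The key computational step is therefore to produce the last map in the resolution explicitly and read its $G$-degree. For a down-up algebra this is classical: writing $r_1 = d^2u-\alpha dud-\beta ud^2$ and $r_2 = du^2-\alpha udu - \beta u^2 d$ as the two rows of the second syzygy, there is an essentially unique (up to scalar) relation $\ell_1 r_1 + \ell_2 r_2 = 0$ with $\ell_1,\ell_2 \in A_1$ spanned by $d,u$; concretely one finds $\ell_1$ proportional to $u$ and $\ell_2$ proportional to $d$ (so that the combination has internal degree $1+3 = 4$), and then the $G$-degree of the top generator is $g_2\cdot(g_1^2 g_2) = g_1^2 g_2^2$, consistently $= g_1 \cdot (g_1 g_2^2)$. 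I would double-check this by the Artin--Schelter Gorenstein symmetry: the resolution is self-dual up to the shift by the AS index $\ell=4$ and a twist by $\mathrm{hdet}$ (resp.\ $\mathrm{hcodet}$ for coactions), and matching homological degree $0$ against degree $3$ forces the $G$-twist to equal the $G$-degree accumulated by the full complex, which is $g_1^2 g_2^2$.

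Once the value $g_1^2 g_2^2$ is established, the ``trivial if and only if'' clause is immediate: by definition the homological codeterminant is trivial exactly when this grouplike/degree element is the identity of $G$, i.e.\ $g_1^2 g_2^2 = 1$. I would also remark that this is independent of $\alpha$, since $\alpha$ only enters through middle terms that are $G$-homogeneous of the already-determined degrees, so the computation is uniform over all noetherian graded down-up algebras $\mathbb{D}(\alpha,\beta)$ (via $\mathbb{D}_\beta$ after the standard reduction, or directly).

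The main obstacle I anticipate is bookkeeping rather than conceptual: writing down the minimal free resolution of $\Bbbk$ over $\mathbb{D}(\alpha,\beta)$ carefully enough, with the correct signs and the correct identification of the last differential, so that the $G$-degree of the top term is unambiguous. An alternative that sidesteps constructing the resolution by hand would be to invoke the known formula for the homological determinant of graded automorphisms of down-up algebras (which can be computed from the action on the one-dimensional top of the Koszul dual, or from the Nakayama-type data in \cite{KKZ1, RRZ}) and then transport it to the coaction setting via the $\Bbbk^G$/$\Bbbk G$ duality; the $G$-degree $g_1^2 g_2^2$ is exactly the group-algebra analogue of the scalar $\lambda_d^2\lambda_u^2$ one gets for a diagonal automorphism $d\mapsto \lambda_d d$, $u\mapsto \lambda_u u$.
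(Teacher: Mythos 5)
Your proposal is correct and follows essentially the same route as the paper: the paper likewise writes down the $G$-graded minimal free resolution of $\Bbbk$ over $\mathbb{D}_\beta$, whose top term is $A(g_1^{-2}g_2^{-2})$, and reads off $\Ext^3_A(\Bbbk,\Bbbk)\cong \Bbbk(g_1^2g_2^2)$ to conclude the codeterminant is $g_1^2g_2^2$. Your explicit identification of the last syzygy (with $\ell_1\propto u$, $\ell_2\propto d$) and the AS-Gorenstein duality check are just more detailed versions of the same degree bookkeeping.
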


\begin{proof} 
Let $A=\mathbb{D}_{\beta}$. Since $G$ coacts on $A$ homogeneously, 
$A$ is a ${\mathbb Z}\times G$-graded algebra. Recall that 
$\mathbb{D}_{\beta}$ is generated by $d$ and $u$ subject to relations
$$d^2 u = \beta u d^2, \quad 
d u^2 = \beta u^2 d.$$
By using the generators and relations 
of $A$, one checks that the $G$-graded resolution of the trivial 
$A$-module $\Bbbk$ is 
{\small $$0\to A(g_1^{-2}g_2^{-2}) 
\to A(g_1^{-1}g_2^{-2})\oplus A(g_1^{-2}g_2^{-1})
\to A(g_1^{-1})\oplus A(g_2^{-1})\to A\to\Bbbk \to 0.$$}

\noindent
Using this resolution to compute the $\Ext$-group, one sees that
$\Ext^3_{A}(\Bbbk,\Bbbk)\cong \Bbbk (g_1^2g_2^2)$ as a $G$-graded
vector space. Hence the $G$-coaction maps a basis element
${\mathfrak e}\in \Ext^3_{A}(\Bbbk,\Bbbk)$ to ${\mathfrak e}
\otimes g_1^2g_2^2$. By definition, the homological
codeterminant of the $G$-coaction is $g_1^2g_2^2$. The assertion 
follows.
\end{proof} 

Next we make some comments about 
\cite[Example 2.1]{CKZ}.

\begin{remark}
\label{xxrem1.6} Consider the algebra $\mathbb{D}:=\mathbb{D}_{1}$ as in 
\cite[Example 2.1]{CKZ}. 
\begin{enumerate}
\item[(1)]
By \cite[Theorem 0.6]{BHZ2}, if $H=\Bbbk G$ for any finite group
$G$ acting on $\mathbb{D}$, then $\Pty(\mathbb{D},G)\geq 2$ and 
$\mathbb{D}\ast G\cong \End_{\mathbb{D}^G}(\mathbb{D})$, so that Auslander's theorem 
holds for group actions on $\mathbb{D}$; this result was expected because 
all finite groups acting on $\mathbb{D}$ are ``small'', since they have 
no reflections, in a sense made precise in \cite{KKZ2}.
But, when $H=\Bbbk^G$ as in \cite[Example 2.1 and Lemma 2.2]{CKZ},
$H$ is a fractional-reflection Hopf algebra with respect to $\mathbb{D}$, so
by Lemma \ref{xxlem1.2}(1), $(\mathbb{D},H)$ does not have the isom-property,
namely, Auslander's theorem fails. By \cite[Theorem 0.3]{BHZ1}, 
$\Pty(\mathbb{D},\Bbbk^G)\leq 1$, and  by Lemma \ref{xxlem1.2}(2), the 
$\Bbbk^G$-action does not have trivial homological determinant.  
Hence group actions behave differently from group coactions.
\item[(2)]
By \cite[Corollary 4.11]{KKZ1} or \cite[Proposition 0.2(2)]{KKZ2}, 
if $H=\Bbbk G$ for some finite group $G$, then $\mathbb{D}^G$ is Gorenstein 
if and only if the $G$-action on $\mathbb{D}$ has trivial homological 
determinant. This result was expected because, again, these groups 
contain no reflections of $\mathbb{D}$. However, \cite[Example 2.1]{CKZ} 
shows that when $H=\Bbbk^G$, this statement fails, namely, $\mathbb{D}^{co\; G}$ 
is Gorenstein, but the $\Bbbk^G$-action does not have trivial homological 
determinant. This result is surprising, and there might be a 
relationship between the facts in parts (1) and (2).
\item[(3)]
Theorem \ref{xxthm0.1} implies that if the $\Bbbk^G$-action on $\mathbb{D}$ 
has trivial homological determinant, then $\Pty(\mathbb{D},\Bbbk^G)\geq 2$ 
and $\mathbb{D}\# \Bbbk^G\cong \End_{\mathbb{D}^{co\; G}}(\mathbb{D})$.
\item[(4)] 
In the commutative case, when a semisimple Hopf algebra acts on a 
polynomial ring $A:=\Bbbk[x_1,\cdots,x_n]$, Auslander's theorem 
fails if and only if there is a nontrivial Hopf subalgebra 
$H_0\subseteq H$ (in this case, $H$ and $H_0$ are group algebras 
$\Bbbk G$ and $\Bbbk G_0$ respectively for some $G_0\subseteq G$) 
such that $A^{H_0}$ is Artin-Schelter regular; this happens if and 
only if $G$ contains a reflection, or equivalently, $G$ is not small. 
Recall that a finite subgroup $G$ of ${\text{GL}}_n(\Bbbk)$ is 
{\it small} if it does not contain any 
reflections. Hence one might conjecture that Auslander's holds for a 
semisimple Hopf algebra if and only if there is no such Hopf subalgebra, 
and that this definition is the generalization for Hopf algebras of 
the notion of a ``small group''. However, \cite[Example 2.1]{CKZ},  
where $H=\Bbbk^G$, shows that this definition of an analogue of a 
``small subgroup'' does not work, since in this example Auslander's 
Theorem fails, but as one can easily check, or use 
\cite[Proposition 0.2(2)]{KKZ2}, that there is NO nontrivial Hopf 
subalgebra $H_0\subseteq H$ such that $\mathbb{D}^{H_0}$ is Artin-Schelter 
regular. So it is not clear how to generalize Auslander's theorem 
beyond our noncommutative analogue of subgroups of ${\rm{SL}}_n(\Bbbk)$ 
(namely $H$-actions with trivial homological determinant) to a 
noncommutative analogue for Hopf algebras of the notion of ``small'' 
groups (groups containing no reflections). 
\end{enumerate}
\end{remark}

\begin{question} 
\label{xxque1.7}
For actions (and coactions) by semisimple Hopf algebras $H$ on 
Artin-Schelter regular algebras $A$, is there an analogue of the 
action  on $\Bbbk[x_1, \dots, x_n]$ by a finite ``small'' 
subgroup of GL$_n(\Bbbk)$ (a condition for Hopf actions with 
non-trivial homological determinant for which Auslander's 
Theorem holds)?
\end{question}

To prove Theorem \ref{xxthm0.1}, we only need to show that 
$\Pty(\mathbb{D}(\alpha,\beta), \Bbbk^G)\geq 2$. The pertinency 
$\Pty(A,H)$ is defined in \eqref{E0.0.3}.

Let $\inth$ be the integral of a semisimple Hopf algebra $H$, 
and $I$ be the two-sided ideal of $A \# H$ generated by $1 \# \inth$.
Recall that a $\Bbbk^G$-action on an algebra $A$ is equivalent 
to a $G$-grading on $A$. 

We recall the following result from \cite{BHZ1} that will be used in the
pertinency computation. 

\begin{lemma}  
\label{xxlem1.8} 
Let $H := (kG)^\circ$ act on $A$ inner
faithfully, and write $A = \oplus_{g\in G} Ag$.
\begin{enumerate}
\item  
\cite[Lemma 5.1 (3)]{BHZ1} 
If $f \in \cap_{g \in G} AA_g$ then $f \# 1 \in I$.
\item  
\cite[Lemma 5.1 (6)]{BHZ1} 
$$\Pty(R,H) \geq  d - \GKdim A/( \cap_{g\in G} AA_g) 
\geq  d -\max\{\GKdim A/AA_g | g \in G \}.$$
\end{enumerate}
\end{lemma}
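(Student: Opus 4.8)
The plan is to make everything explicit using the idempotent description of $H=\Bbbk^G$. Let $\{e_g\}_{g\in G}$ be the basis of orthogonal idempotents dual to $G$, so that $e_ge_h=\delta_{g,h}e_g$, $\sum_g e_g=1$, and $\Delta(e_g)=\sum_{st=g}e_s\otimes e_t$; under the correspondence between the $\Bbbk^G$-action and the $G$-grading, $e_g$ acts as the projection $e_g\cdot a=a_g$ onto $A_g$, and the normalized integral is $\inth=e_1$. From the smash-product rule one computes $(a\# e_g)(b\# e_h)=a\,b_{gh^{-1}}\# e_h$, and hence, for all $a,b\in A$ and $k\in G$,
$$(a\# e_1)(1\#\inth)(b\# e_k)=a\,b_{k^{-1}}\# e_k .$$
Because $(a\# e_g)(1\#\inth)=\delta_{g,1}\,a\# e_1$, it suffices to take the left factor of the form $a\# e_1$; letting $a,b$ range over $A$ then shows that the two-sided ideal $I=(A\# H)(1\#\inth)(A\# H)$ equals $\bigoplus_{k\in G}(AA_{k^{-1}})\# e_k$.

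Part (1) is then immediate. Writing $1=\sum_{k}e_k$ in $H$ gives $f\#1=\sum_k f\# e_k$, and if $f\in\bigcap_{g}AA_g=\bigcap_k AA_{k^{-1}}$ then each $f\# e_k$ lies in $(AA_{k^{-1}})\# e_k\subseteq I$, so $f\#1\in I$.

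For part (2) I would pass to the quotient $S:=(A\# H)/I$ and analyse it through the subalgebra $A\#1\cong A$. Since $(a\#1)(b\# e_k)=ab\# e_k$, the algebra $A\# H$ is free of rank $|G|$ as a left $A$-module with $A\# H=\bigoplus_k A\# e_k$, and $I$ is a left $A$-submodule, so
$$S=\bigoplus_{k\in G}\bigl(A/AA_{k^{-1}}\bigr)\# e_k$$
as left $A$-modules. The kernel of the algebra map $A\to S$, $a\mapsto\overline{a\#1}$, is $\{a:a\in AA_{k^{-1}}\ \forall k\}=\bigcap_g AA_g=:J$. Each $AA_g$ is only a left ideal, but $J$ is two-sided: if $a\in AA_{gh^{-1}}$ and $x\in A_h$ then $ax\in AA_g$ since $A_{gh^{-1}}A_h\subseteq A_g$, and intersecting over all $g$ shows $JA\subseteq J$. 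Hence $A/J$ is an algebra, equal to the image $\bar A$ of $A$ in $S$.

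The crux is the Gelfand–Kirillov estimate. As $A\# H$ is module-finite over $A\#1\cong A$, its quotient $S$ is module-finite as a left module over $\bar A=A/J$, namely $S=\sum_k\bar A\,\overline{1\# e_k}$. By the standard fact that Gelfand–Kirillov dimension is preserved under finite module extensions of algebras, $\GKdim S=\GKdim\bar A=\GKdim(A/J)$, so
$$\Pty(A,H)=\GKdim A-\GKdim S=d-\GKdim\bigl(A/\textstyle\bigcap_g AA_g\bigr),$$
which is the first inequality (indeed an equality, with $d=\GKdim A$). For the second inequality, the definition $J=\bigcap_g AA_g$ yields an injection of left $A$-modules $A/J\hookrightarrow\bigoplus_g A/AA_g$, whence $\GKdim(A/J)\le\max_g\GKdim(A/AA_g)$ and the bound follows. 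The main obstacle is precisely this Gelfand–Kirillov step: one must argue that the \emph{algebra} Gelfand–Kirillov dimension of the noncommutative quotient $S$ is governed by its module-finiteness over $\bar A$ rather than merely bounded by $\GKdim A$, which requires both the preservation of $\GKdim$ under finite module extensions and the verification that $J$ is two-sided so that $\bar A=A/J$ is a genuine subalgebra. The explicit formula for $I$ and the module decomposition of $S$ are routine once the idempotent basis and the multiplication rule are recorded.
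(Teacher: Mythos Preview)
The paper does not prove Lemma~\ref{xxlem1.8}; it is quoted verbatim from \cite[Lemma 5.1(3),(6)]{BHZ1} without argument. Your proposal therefore supplies a self-contained proof where the paper gives none, and the argument is correct. The explicit description $I=\bigoplus_{k\in G}(AA_{k^{-1}})\# e_k$ follows from your smash-product computation (and one checks directly, as you implicitly use, that this subspace is closed under left and right multiplication by $A\# H$); part~(1) is then immediate. For part~(2) your identification of $\bar A$ with $A/J$, together with the module-finiteness of $S$ over $\bar A$ and the standard fact (e.g.\ Krause--Lenagan, Prop.~5.5) that $\GKdim$ is unchanged under finite one-sided module extensions of algebras, is exactly the right mechanism. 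Note that your argument actually produces an \emph{equality} $\Pty(A,H)=\GKdim A-\GKdim(A/J)$ for $H=\Bbbk^G$, which is sharper than the inequality recorded in the paper; the inequality in \cite{BHZ1} is stated that way because it is formulated in greater generality there.
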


The following is a modification of 
\cite[Lemma 5.1(4)]{BHZ1}.

\begin{lemma}
\label{xxlem1.9} 
Let $G$ be a finite group and $\Bbbk^G$ act on $A$ 
inner-faithfully and homogeneously. Let $z\in A$. 
\begin{enumerate}
\item[(1)]
Suppose that, for each $g\in G$, there is an $x\in A$ 
of $G$-degree $g$ and $y\in A$ such that $z=yx$. Then 
$z\# 1$ is in the ideal of $A\# \Bbbk^G$ generated by $e:=1\#\inth$.
\item[(2)] 
Suppose $z=f_n\cdots f_1$ is such that the collection {\rm{(}}with 
possible repetitions{\rm{)}}
$$\{1, \deg_G (f_1), \deg_G (f_2f_1), 
\cdots, \deg_G(f_{n-1}\cdots f_2f_1), \deg_G (z)\}$$ 
includes all elements in $G$. Then  $z\# 1$ is in the ideal of 
$A\# \Bbbk^G$ generated by $e:=1\#\inth$.
\end{enumerate}
\end{lemma}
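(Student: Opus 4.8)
The plan is to reduce part (2) to part (1). For part (1), I would argue as follows. Fix $z \in A$ with the stated property. The ideal generated by $e = 1\#\inth$ in $A\#\Bbbk^G$ contains all elements of the form $(a\#1)\,e\,(b\#1) = a\,e\,b$ for $a,b\in A$; more precisely, using that $\inth$ is the integral of $\Bbbk^G$ (so that $1\#\inth$ corresponds to the projection onto the identity-graded component, i.e. $\inth\cdot a = a_1$ where $a=\sum_{g} a_g$ is the $G$-graded decomposition), one computes that $(a\#1)\,e\,(b\#1)$, after expanding in the smash product, produces a sum whose $A$-component is $\sum_g a\, b'_g$ type terms governed by the grading. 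The key calculation — and I expect this to be the main technical point — is the precise form of $(x'\#1)\,e\,(1\#1)$ or of $e\,(x\#1)$ when $x$ is homogeneous of $G$-degree $g$: one should get something like $x\# p_g$ (up to the relevant convention), so that summing over $g$ and multiplying appropriately by the $y$'s recovers $z\#1$. Concretely, since $z = yx$ with $\deg_G x = g$, I would show $z\#1 = y\cdot(x\#1)$ lies in $Ae A$ by checking that $x\#1$ itself, or a suitable correction of it, is already in $AeA$ because $x$ is homogeneous; the hypothesis ``for each $g\in G$'' guarantees we can do this uniformly, and then one assembles the pieces. This is essentially the mechanism of \cite[Lemma 5.1(4)]{BHZ1}, adapted so that we do not need $z$ itself to be $G$-homogeneous.

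For part (2), I would prove by induction on $n$ that $z\#1 = (f_n\cdots f_1)\#1$ lies in $I$. Write $z_0 = 1$ and $z_k = f_k\cdots f_1$ for $1\le k\le n$, so $z = z_n$ and the hypothesis says $\{\deg_G z_0, \deg_G z_1, \dots, \deg_G z_n\} = G$. The idea is a telescoping/factorization argument: since the partial products $z_k$ realize every $G$-degree, for any target $g\in G$ there is an index $k$ with $\deg_G z_k = g$, and then $z = (f_n\cdots f_{k+1})\,z_k$ exhibits $z$ as $y\cdot x$ with $x = z_k$ of $G$-degree $g$. Applying part (1) directly — the hypothesis of (1) is exactly satisfied by this family of factorizations, one for each $g$ — yields $z\#1 \in I$. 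So in fact (2) is an immediate corollary of (1), with no induction needed; the content is just recognizing that the partial-product condition supplies precisely the family of right-divisors of the stated $G$-degrees required by (1).

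The main obstacle is getting the smash-product bookkeeping in part (1) exactly right: one must track the $\Bbbk^G$-action (equivalently the $G$-grading) through the multiplication $(a\#\xi)(b\#\eta)$ in $A\#\Bbbk^G$, keep careful account of whether one works with $\Bbbk^G$ or its dual $\Bbbk G$, and verify that multiplying $1\#\inth$ on the left and right by homogeneous elements of $A$ yields, after summation, an element whose "algebra part" is $z$ and whose "Hopf part" is $1$ — so that $z\#1 \in I$. This is the kind of computation that is routine in principle but where sign/convention errors are easy; I would set it up by first recalling explicitly that $1\#\inth$ acts on $A$ (under $\phi$) as the projection $A \to A_1 \subseteq A$ onto the identity component, and then leverage \cite[Lemma 1.8]{xxlem1.8}(1), which already tells us $f\#1\in I$ whenever $f\in\bigcap_{g\in G} AA_g$. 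Indeed, the cleanest route is probably: show that the hypothesis of (1) forces $z \in AA_g$ for every $g\in G$ (since $z = yx$ with $x\in A_g$ gives $z\in AA_g$ for that particular $g$, and ranging over all $g$ covers everything), hence $z\in\bigcap_{g} AA_g$, and then \cite{xxlem1.8}(1) gives $z\#1\in I$ immediately. Part (2) then follows as above.
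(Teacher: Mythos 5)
Your proposal is correct and, in its final form, is exactly the paper's own argument: part (1) follows by observing $z\in AA_g$ for every $g$, hence $z\in\bigcap_{g\in G}AA_g$, and invoking Lemma \ref{xxlem1.8}(1), while part (2) reduces to part (1) via the partial products $f_k\cdots f_1$ realizing all $G$-degrees. The preliminary smash-product bookkeeping you sketch is unnecessary once you take that route, which is the one the paper uses.
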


\begin{proof} (1) Since $z=yx\in A A_{g}$ for each $g$, we have
$z\in \bigcap_{g\in G} A A_g$. The assertion follows 
from Lemma \ref{xxlem1.8}.

(2) This is a special case of part (1).
\end{proof}

In the next lemma we use some arguments from Bergman's Diamond Lemma 
\cite{Be}. Recall that $\mathbb{D}(\alpha, \beta)$ is generated by $d$ and $u$.
We use the ordering $d<u$ in this paper. Two relations of 
$\mathbb{D}(\alpha, \beta)$, namely, \eqref{E0.0.1}-\eqref{E0.0.2} can be 
written as 
$$\begin{aligned}
u d^2&= {\rm{lower}}\;\; {\rm{terms}},\\
u^2 d&= {\rm{lower}}\;\; {\rm{terms}}
\end{aligned}
$$
where ``${\rm{lower}}\;\; {\rm{terms}}$'' stands for a linear combination 
of monomials that have lower degree (in the lexicographic order) than 
the terms explicitly appearing in the same equation. 

\begin{lemma}
\label{xxlem1.10} Retain the above notation.
\begin{enumerate}
\item[(1)]
Let $W$ be an ideal of $\mathbb{D}(\alpha, \beta)$ such that, in the factor
ring $\mathbb{D}(\alpha, \beta)/W$, there are relations
$$\begin{aligned}
d^{s} (ud)^i&= {\rm{lower}}\;\; {\rm{terms}},\\
u^t &={\rm{lower}}\;\; {\rm{terms}}
\end{aligned}
$$
for some $i,s,t\geq 0$. 
Then $\GKdim \mathbb{D}(\alpha,\beta)/W\leq 1$.
\item[(2)]
Let $W$ be an ideal of $\mathbb{D}_{\beta}$ such that, in the factor
ring $\mathbb{D}_{\beta}/W$, there are relations
$$\begin{aligned}
d^{2s} (du)^i&= {\rm{lower}}\;\; {\rm{terms}},\\
(ud)^j u^{2t} &={\rm{lower}}\;\; {\rm{terms}}
\end{aligned}
$$
for some $i,j,s,t\geq 0$. Then $\GKdim \mathbb{D}_{\beta}/W\leq 1$.
\end{enumerate}
\end{lemma}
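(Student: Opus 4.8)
\textbf{Proof plan for Lemma \ref{xxlem1.10}.}
The plan is to reduce both statements to a dimension count on a spanning set of monomials, using Bergman's Diamond Lemma to control which monomials survive in the factor ring. For \mathbb{D}(\alpha,\beta) with the ordering $d<u$ and the leading terms $ud^2$ and $u^2d$, a standard Diamond Lemma argument shows that a $\Bbbk$-basis for $\mathbb{D}(\alpha,\beta)$ is given by the normal monomials, namely those containing neither $ud^2$ nor $u^2d$ as a subword; these are exactly the monomials of the form $u^a (ud)^b d^c$ with $a,c\geq 0$ and $b\in\{0,1\}$, or equivalently one checks that every normal word is $d^c$, $u^a$, $d^c u^a$ with the $u$-block and the $d$-block separated by at most a single alternating $ud$. (I will state the precise normal form, which is the one already implicit in the displayed free resolution of $\Bbbk$.) The point is that the normal monomials of degree $n$ grow \emph{linearly} in $n$, which reflects $\GKdim \mathbb{D}(\alpha,\beta)=3$ via the Hilbert series, but after imposing the extra relations in part (1) we will cut this down to a \emph{bounded} number of monomials in each degree, giving $\GKdim \leq 1$.

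For part (1): in $\mathbb{D}(\alpha,\beta)/W$ we have, on top of the down-up relations, the two relations $d^{s}(ud)^i=(\text{lower})$ and $u^t=(\text{lower})$. First I would argue that $\mathbb{D}(\alpha,\beta)/W$ is spanned by the images of the normal monomials of $\mathbb{D}(\alpha,\beta)$ that in addition avoid the new leading words $d^{s}(ud)^i$ and $u^{t}$ as subwords — this is the Diamond Lemma reduction applied with the enlarged set of relations (we may need to resolve the new overlap ambiguities, but for a GK-dimension bound it suffices to spanning, so I can simply iterate the reductions without checking confluence). Writing a normal monomial as $u^{a}(ud)^{b}d^{c}$ with $b\in\{0,1\}$: avoiding $u^{t}$ forces $a\leq t-1$ when $b=c=0$, and more to the point the presence of the initial $u^{a}$ block is bounded by the $u^t$ relation; avoiding $d^{s}(ud)^{i}$ bounds the trailing structure. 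The $b\leq 1$ already bounds the middle. So the only potentially unbounded parameter is $c$, the exponent of the trailing $d^{c}$, but this exponent is either free (contributing the single one-parameter family $d^{c}$, hence $\GKdim\leq 1$) or killed: indeed if $i=0$ the first relation reads $d^{s}=(\text{lower})$, bounding $c<s$ and giving $\GKdim=0$; and if $i\geq 1$ then once $c\geq s$ the word $d^{c}(ud)\cdots$ reduces, so the only surviving monomials with large $c$ are the pure powers $d^{c}$. In every case the spanning monomials of degree $n$ number at most a constant independent of $n$ plus the single family $\{d^n\}$, so $\GKdim \mathbb{D}(\alpha,\beta)/W\leq 1$.

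For part (2): here $\alpha=0$, so $\mathbb{D}_\beta$ has relations $d^2u=\beta ud^2$ and $du^2=\beta u^2d$; the key structural fact I will use is that $d^2$ and $u^2$ are normal (they commute with everything up to scalars, since $d^2u=\beta ud^2$ says $d^2$ normalizes $u$, and similarly $u^2$), and the center contains $d^2,u^2$ up to the degree-graded twist, so that $\mathbb{D}_\beta$ is finite over the commutative-like subalgebra generated by $d^2,u^2,du,ud$. Concretely $\mathbb{D}_\beta$ is spanned by $\{(d^2)^p(du)^{\epsilon}d^{\delta}u^{\sigma}\cdot(\text{small data})\}$ and one reduces to: the relations $d^{2s}(du)^i=(\text{lower})$ and $(ud)^j u^{2t}=(\text{lower})$ bound the powers of $d^2$ and $u^2$ respectively after the alternating part is exhausted, again leaving in each degree only boundedly many monomials together with at most the one-parameter families $\{d^n\},\{u^n\}$. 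Hence $\GKdim \mathbb{D}_\beta/W\leq 1$.

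I expect the main obstacle to be the bookkeeping in the Diamond Lemma step: correctly identifying the normal form of $\mathbb{D}(\alpha,\beta)$ and then tracking how the enlarged relation set $W$ together with the down-up relations reduces an arbitrary monomial, in particular making sure that the newly created overlap ambiguities (between $ud^2$, $u^2d$, $d^s(ud)^i$ and $u^t$) do not produce an infinite family of irreducible monomials. I will handle this by observing that for a GK-dimension \emph{upper} bound we never need confluence — any spanning set by monomials suffices — so it is enough to exhibit, by repeated leftmost reduction, that every monomial of degree $n$ lies in the span of a set of monomials whose size is $O(1)$ apart from the explicit families $\{d^n\}$ (and $\{u^n\}$ in part (2)), which have GK-dimension $1$.
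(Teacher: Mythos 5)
Your normal form for $\mathbb{D}(\alpha,\beta)$ is wrong, and the error is load-bearing. With the order $d<u$, the rewriting rules coming from \eqref{E0.0.1}--\eqref{E0.0.2} have leading words $ud^2$ and $u^2d$, and the monomials avoiding these as subwords are exactly $d^a(ud)^bu^c$ with $a,b,c\geq 0$ \emph{all unbounded} (for instance $(ud)^b$ is irreducible for every $b$), not your $u^a(ud)^bd^c$ with $b\in\{0,1\}$; indeed your words are not even irreducible, since $u\cdot ud$ already contains the leading word $u^2d$. A bounded middle block would force linear growth of the graded pieces, i.e.\ $\GKdim=2$, contradicting $\GKdim\mathbb{D}(\alpha,\beta)=3$ (your aside that linear growth ``reflects $\GKdim=3$'' is the symptom). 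In part (1) you then declare ``$b\leq 1$ already bounds the middle,'' so your argument never uses the hypothesis $d^s(ud)^i=$ lower terms for its actual purpose: with the correct spanning set, a surviving monomial $d^a(ud)^bu^c$ must satisfy $c<t$ and (either $a<s$ or $b<i$), and only this combined constraint makes each graded piece uniformly bounded. That is the paper's proof of (1); your version, resting on the false basis claim, is not a proof, although it becomes one after this correction.

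Part (2) has a gap that correcting the normal form alone does not fix. Used merely as subword rewritings alongside $ud^2,u^2d$, the two hypothesized relations in general do not touch the two-parameter family $d^au^c$: e.g.\ for $i,j\geq 2$ neither $d^{2s}(du)^i$ nor $(ud)^ju^{2t}$ occurs as a subword of $d^au^c$, so a naive spanning count only yields $\GKdim\leq 2$. The paper's proof must first \emph{combine} the two relations: after normalizing $s=t=i=j=:a$, it multiplies them, using normality of $d^2$ and $u^2$, to obtain $d^{4a}u^{4a}=$ lower terms, and then, skew-commuting $u^2$ past letters, shows that $d^i(ud)^ju^k=$ lower terms whenever at least two of $i,j,k$ exceed $4a$; only then is the dimension in each degree uniformly bounded. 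Your sketch (``the relations bound the powers of $d^2$ and $u^2$ respectively after the alternating part is exhausted'') skips exactly this step, and the alternating part is not exhausted in the first place. Your observation that confluence is unnecessary for an upper bound is correct and is implicitly how the paper argues, but it does not supply the missing reductions.
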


\begin{proof} (1) Together with \eqref{E0.0.1}-\eqref{E0.0.2},
we have at least four relations
$$\begin{aligned}
u d^2&= {\rm{lower}}\;\; {\rm{terms}},\\
u^2 d&= {\rm{lower}}\;\; {\rm{terms}},\\
d^{s} (ud)^i&= {\rm{lower}}\;\; {\rm{terms}},\\
u^t &={\rm{lower}}\;\; {\rm{terms}}
\end{aligned}
$$
in the factor ring $\mathbb{D}(\alpha, \beta)/W$. By the Diamond 
Lemma \cite{Be} and using the first two relations, 
$\mathbb{D}(\alpha,\beta)/W$ has a $\Bbbk$-linear basis 
consisting of monomials of the form
$$ d^a (ud)^b u^c, \quad a,b,c\geq 0$$
with some constraints. (A similar statement is 
\cite[Lemma 1.1(3)]{CKZ} where we use the order $u<d$.) 
Two of the constraints are 
(i) either $a<s$ or $b<i$ and (ii) $c<t$, which follows from 
the last two relations of $\mathbb{D}(\alpha,\beta)/W$. Therefore, 
for each ${\mathbb N}$-degree $d$, the $\Bbbk$-dimension of 
$(\mathbb{D}(\alpha,\beta)/W)_d$ is uniformly bounded. As a 
consequence of a Gelfand-Kirillov dimension computation 
\cite[(E1.1.6)]{BHZ2}, $\GKdim \mathbb{D}(\alpha,\beta)/W\leq 1$.

(2) The proof is similar to the one of part (1) and uses the fact that
$d^2$ and $u^2$ are normal elements of $\mathbb{D}_\beta$.

Without loss of generality, we can assume that $s=t=i=j=: a>0$
and re-use the letters $i$ and $j$. Let 
$$\begin{aligned}
d^{2a}(du)^a &= {\rm{lower}}\;\; {\rm{terms}},\\
(ud)^a u^{2a} &= {\rm{lower}}\;\; {\rm{terms}}
\end{aligned}
$$
in $\mathbb{D}_{\beta}/W$. Then 
$$d^{4a}u^{4a}= \lambda (d^{2a}(du)^a)( (ud)^a u^{2a})
={\rm{lower}}\;\; {\rm{terms}}$$
in $\mathbb{D}_{\beta}/W$, for some $\lambda\in \Bbbk$. 
Then $d^{4a} u^{4a} = d^{4a+1} f$ in $\mathbb{D}_{\beta}/W$ for some
$f$. Since $u^2$ skew-commutes with $d$ and $u$, we obtain that 
$$d^{4a} (ud)^j u^{4a} = d^{4a+1} f' $$
or 
$$d^{4a} (ud)^j u^{4a} = {\rm{lower}}\;\; {\rm{terms}}.$$
Therefore 
$$d^i(ud)^ju^k= {\rm{lower}}\;\; {\rm{terms}}$$ 
in $\mathbb{D}_{\beta}/W$ when at least two of indices $i, j, k$ 
are larger than $4a$. By the Diamond Lemma argument as in 
the proof of part (1), for each ${\mathbb N}$-degree
$d$, the $\Bbbk$-dimension of $(\mathbb{D}_{\beta}/W)_d$ is uniformly 
bounded. By \cite[(E1.1.6)]{BHZ2}, $\GKdim \mathbb{D}_{\beta}/W\leq 1$.
\end{proof}

\section{Proof of Theorem \ref{xxthm0.1}}
\label{xxsec2}

In this section we prove the main result, the theorem of
Auslander for group coactions on down-up algebras. First 
we recall a result from \cite{CKZ}.

Let ${\mathbb{F}}$ be the algebra generated by $x$ and $y$,  subject to 
two relations
\begin{equation}
\label{E2.0.1}\tag{E2.0.1}
x^3=yxy \quad \mbox{ and } \quad y^3=xyx.
\end{equation}
As a graded algebra, ${\mathbb{F}}$ is isomorphic to $\mathbb{D}_{-1}$
\cite[Lemma 1.5(1)]{CKZ}. Let ${\mathbb{H}}$ be the algebra 
generated by $x$ and $y$,  subject to two relations
\begin{equation}
\label{E2.0.2}\tag{E2.0.2}
x^2 y+yx^2 -2y^3=0 \quad \mbox{ and } \quad 
-2x^3+xy^2+y^2 x=0.
\end{equation}
Then, as a graded algebra, ${\mathbb{H}}$ is isomorphic 
to $\mathbb{D}(-2,-1)$ \cite[Lemma 1.9(1)]{CKZ}.

\begin{lemma} \cite[Proposition 1.12]{CKZ}
\label{xxlem2.1}
Suppose $G$ is a finite non-cyclic group coacting on
$A:=\mathbb{D}(\alpha,\beta)$ homogeneously and inner-faithfully.
Then one of the following occurs.
\begin{enumerate}
\item[(1)]
$\alpha=0$ and $u$ and $d$ are $G$-homogeneous after
a change of variables.
\item[(2)]
$A$ is isomorphic to ${\mathbb{F}}$ and using the generators of ${\mathbb{F}}$,
both $x$ and $y$ are $G$-homogeneous.
\item[(3)]
$A$ is isomorphic to ${\mathbb{H}}$ and using the generators of ${\mathbb{H}}$,
both $x$ and $y$ are $G$-homogeneous.
\item[(4)]
$G$ is abelian and there are linearly independent elements
$x$ and $y$ of $\mathbb{D}(\alpha, -1)$ of degree one
such that
$$\begin{aligned}
\alpha x^2 y+(-2-\alpha) xyx+ \alpha yx^2 +(2-\alpha) y^3&=0,\\
(2-\alpha) x^3+ \alpha xy^2+(-2-\alpha) yxy+ \alpha y^2 x&=0
\end{aligned}
$$
and  $x$ and $y$ are $G$-homogeneous.
\item[(5)]
$G$ is abelian and $u$ and $d$ are $G$-homogeneous after a change
of variables.
\end{enumerate}
\end{lemma}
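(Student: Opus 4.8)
A homogeneous $G$-coaction on $A$ is the same datum as a $\mathbb{Z}\times G$-grading on $A$ refining the connected grading, and inner-faithfulness says that $G$ is generated by the $G$-degrees of the $G$-homogeneous elements; since $A$ is generated in degree one, $G$ is generated by the $G$-degrees occurring in $A_1=\Bbbk d\oplus\Bbbk u$. The plan is: (a) pin down the $G$-grading on $A_1$; (b) impose $G$-homogeneity of the two defining cubic relations written in the resulting coordinates; (c) match each surviving configuration with one of (1)--(5).

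For step (a): as a $2$-dimensional $G$-comodule, $A_1$ is a sum of $G$-homogeneous lines. If these lines share a common $G$-degree $g$ (in particular if $A_1$ is itself $G$-homogeneous), then every $G$-homogeneous element of $A$ has $G$-degree a power of $g$, so $G=\langle g\rangle$ is cyclic, contrary to hypothesis. Hence $A_1=\Bbbk x\oplus\Bbbk y$ with $x,y$ $G$-homogeneous of \emph{distinct} degrees $g_1\neq g_2$ and $G=\langle g_1,g_2\rangle$. Using $x,y$ as new generators, $A$ is presented by two cubic relations $R_1,R_2$ in $x,y$, each of which must be supported on monomials of a single $G$-degree among $g_1^3,\,g_1^2g_2,\,g_1g_2g_1,\,g_2g_1^2,\,g_1g_2^2,\,g_2g_1g_2,\,g_2^2g_1,\,g_2^3$.

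For step (b): I would first record which coincidences among these eight $G$-degrees do \emph{not} force $g_1g_2=g_2g_1$ --- for example $\deg(x^2y)=\deg(yx^2)$ iff $g_1^2$ centralizes $g_2$ (hence, since $G=\langle g_1,g_2\rangle$, $g_1^2\in Z(G)$), and $\deg(x^3)=\deg(yxy)$, $\deg(y^3)=\deg(xyx)$ impose no commutativity, whereas $\deg(x^2y)=\deg(xyx)$ already forces $g_1g_2=g_2g_1$. Intersecting this list of ``non-abelian-compatible'' monomial supports with the requirement that the algebra presented by $R_1,R_2$ actually be a graded down-up algebra with $\beta\neq0$ --- equivalently that $\langle R_1,R_2\rangle$ lie in the $\mathrm{GL}(A_1)$-orbit of the relation space of some $\mathbb{D}(\alpha,\beta)$, which is governed by the isomorphism problem and the graded automorphism groups of down-up algebras \cite{KMP, KK} --- should leave exactly three non-abelian shapes: the $\mathbb{D}_\beta$-shape (relations $x^2y=\beta yx^2$, $xy^2=\beta y^2x$, needing $g_1^2,g_2^2\in Z(G)$; here $\alpha=0$ and $d,u$ are $G$-homogeneous after the change of variables, giving case (1)); the shape \eqref{E2.0.1}, which forces $A\cong\mathbb{F}=\mathbb{D}_{-1}$ (case (2)); and the shape \eqref{E2.0.2}, which forces $A\cong\mathbb{H}=\mathbb{D}(-2,-1)$ (case (3)). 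If the configuration is none of these, the homogeneity constraints on $R_1,R_2$ force $g_1g_2=g_2g_1$, so $G$ is abelian; then, examining the general $\mathrm{GL}(A_1)$-substitution into $\mathbb{D}(\alpha,\beta)$, either it may be taken diagonal, so $d,u$ are already $G$-homogeneous (case (5)), or a direct computation shows $\beta=-1$ and the transformed relations are precisely those displayed in case (4).

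The main obstacle is the classification invoked in step (b): proving that the \emph{only} non-abelian-compatible down-up algebras are $\mathbb{D}_\beta$, $\mathbb{F}$ and $\mathbb{H}$, and that in the abelian case the only genuinely non-diagonal grading is the $\beta=-1$ family of case (4). This requires tight control of the $\mathrm{GL}(A_1)$-orbit of each down-up relation ideal --- equivalently of the graded automorphism groups from \cite{KK} and the isomorphism relation among the $\mathbb{D}(\alpha,\beta)$ --- together with a finite but delicate bookkeeping over the possible $G$-weight patterns on the eight cubic monomials. Once that is settled, the reduction in step (a) and the identification in step (c) are routine linear algebra.
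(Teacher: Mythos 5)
The first thing to note is that this paper does not prove Lemma \ref{xxlem2.1} at all: it is quoted verbatim from \cite[Proposition 1.12]{CKZ}, and the proof lives in that earlier paper. So the relevant comparison is with the argument in \cite{CKZ}, and your outline does follow the natural (and, in essence, the actual) route: a homogeneous $G$-coaction is a $G$-grading, inner-faithfulness plus non-cyclicity forces $A_1=\Bbbk x\oplus\Bbbk y$ with $x,y$ homogeneous of distinct degrees $g_1\neq g_2$ generating $G$, the relation space must be spanned by $G$-homogeneous cubics, and one then sorts the possible weight patterns against the $\mathrm{GL}(A_1)$-orbits of down-up relation ideals. Your step (a) is correct as stated.

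The genuine gap is that step (b) is not an argument but a statement of the proposition itself. The assertions that the ``non-abelian-compatible'' configurations are exactly the $\mathbb{D}_\beta$-shape, the $\mathbb{F}$-shape \eqref{E2.0.1} and the $\mathbb{H}$-shape \eqref{E2.0.2}, and that in the abelian non-diagonal case one is forced to $\beta=-1$ with precisely the two displayed relations of case (4), constitute the entire content of \cite[Proposition 1.12]{CKZ}; you flag them with ``should leave exactly three non-abelian shapes'' and then list the required classification as ``the main obstacle.'' Nothing in the proposal actually carries out the bookkeeping over the eight cubic $G$-degrees, the analysis of which degree coincidences force $g_1g_2=g_2g_1$, or the identification of the resulting relation spaces with those of $\mathbb{D}_\beta$, $\mathbb{D}_{-1}\cong\mathbb{F}$, $\mathbb{D}(-2,-1)\cong\mathbb{H}$ and the $\mathbb{D}(\alpha,-1)$ family (which in \cite{CKZ} rests on explicit computations and on the isomorphism/graded-automorphism results of \cite{KK}). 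In particular the abelian dichotomy ``diagonalizable, hence case (5), versus $\beta=-1$, hence case (4)'' is exactly the delicate computation, not a routine consequence. As a standalone proof the proposal therefore establishes only the easy reduction; the classification that the lemma asserts remains unproven and is instead deferred to the very source being cited.
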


The above lemma shows that there are plenty of interesting examples of 
finite group coactions on noetherian down-up algebras.

Note that the hypothesis of ``$G$ being non-cyclic'' is needed in 
the above lemma which was proved in \cite{CKZ}. In the present
paper we will also consider cyclic cases. In particular, our main 
theorem does not need the hypothesis of ``$G$ being non-cyclic''.

We separate the proof of Theorem \ref{xxthm0.1} into subcases 
according to the above lemma. In Cases 1 and 2 we assume that 
$G$ is not cyclic; the cyclic cases will be included in Case 3. 

\subsection{Case 1: $\alpha=0$, $u$ and $d$ are $G$-homogeneous}
\label{xxsec2.1}
In this subsection, as $\alpha=0$, $A$ is the down-up algebra 
$$\mathbb{D}_{\beta}:=\Bbbk\langle d,u \rangle/(d^2u-\beta ud^2, du^2-\beta u^2d),\;\;
\beta\in \Bbbk^{\times}.$$
Suppose that $\mathbb{D}_{\beta}$ is $G$-graded with $\deg_G d=g_1$ and 
$\deg_G u=g_2$. Since $\mathbb{D}_{\beta}$ is generated by $d$ and $u$, 
$G$ is generated by $g_1$ and $g_2$. Let 
\begin{equation}
\label{E2.1.1}\tag{E2.1.1}
X_1:=\{(g_2g_1)^i\mid i\geq 0\}\cup \{g_1(g_2g_1)^i\mid i\geq 0\}\subseteq G
\end{equation}
and 
\begin{equation}
\label{E2.1.2}\tag{E2.1.2}
X_2:=\{(g_1g_2)^i\mid i\geq 0\}\cup \{g_2(g_1g_2)^i\mid i\geq 0\}\subseteq G.
\end{equation}
As in \cite[Lemma 5.1]{BHZ1} let
\begin{equation}
\label{E2.1.3}\tag{E2.1.3}
J:={\rm{the}}\;\; {\rm{ideal}}\;\; {\rm{of}}\;\; A \;\;
{\rm{generated}}\;\; {\rm{by}}\;\; \bigcap_{g\in G} A A_g 
\end{equation}
when a group $G$ coacts on $A$.

\begin{lemma}
\label{xxlem2.2}
Suppose that $\langle g_1\rangle X_1=G=\langle g_2\rangle X_2$.
Then $\Pty(\mathbb{D}_{\beta},\Bbbk^G)\geq 2$. 
\end{lemma}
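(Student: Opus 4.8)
The plan is to invoke Lemma~\ref{xxlem1.8}(2), which reduces the claim $\Pty(\mathbb{D}_{\beta},\Bbbk^G)\geq 2$ to showing that
$$\GKdim \mathbb{D}_{\beta}/\Big(\bigcap_{g\in G} A A_g\Big)\leq 1,$$
since $\GKdim \mathbb{D}_{\beta}=3$. Writing $J$ as in \eqref{E2.1.3}, it suffices to exhibit, inside the factor ring $\mathbb{D}_{\beta}/J$, two relations of the shape required by Lemma~\ref{xxlem1.10}(2), namely
$$d^{2s}(du)^i = {\rm lower\ terms}, \qquad (ud)^j u^{2t} = {\rm lower\ terms}$$
for suitable $i,j,s,t\geq 0$; then that lemma immediately gives $\GKdim\leq 1$. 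So the whole proof comes down to producing two monomials of the allowed form that lie in $\bigcap_{g\in G} A A_g$ (up to ``lower terms'', which is harmless since in $\mathbb{D}_{\beta}/J$ such an element is zero modulo $J$ and hence the leading monomial is a relation).

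First I would analyze the $G$-degrees of the standard monomials. Using the normal elements $d^2$ and $u^2$ of $\mathbb{D}_{\beta}$, the monomial $d^a(ud)^b u^c$ has $G$-degree $g_1^a (g_2 g_1)^b g_2^c$; more to the point, the ``suffix'' degrees of a product like $f_n\cdots f_1$ built from alternating $d$'s and $u$'s run through exactly the sets $X_1$ and $X_2$ of \eqref{E2.1.1}--\eqref{E2.1.2}. Concretely, reading a word of the form $\cdots u\,d\,u\,d$ (ending in $d$) from the right, the partial-product $G$-degrees are $1, g_1, g_2g_1, g_1g_2g_1,\dots$, i.e. they sweep out $X_1$; reading a word ending in $u$ sweeps out $X_2$. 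The hypothesis $\langle g_1\rangle X_1 = G$ says: multiplying the elements of $X_1$ on the left by powers of $g_1$ covers all of $G$; equivalently, a long enough word of the form $d^{N}(ud)^{M}$ has the property that its collection of partial-product $G$-degrees (from the right) includes every element of $G$. Then Lemma~\ref{xxlem1.9}(2) applies and shows $d^{N}(ud)^{M}\# 1\in I$, hence $d^{N}(ud)^{M}\in \bigcap_{g} A A_g$, so $d^{N}(ud)^{M}\in J$, i.e. $d^{N}(ud)^{M}= 0$ in $\mathbb{D}_{\beta}/J$ — a relation of the first required shape (taking $2s=N$ after possibly doubling $N$, which is harmless). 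Symmetrically, $\langle g_2\rangle X_2 = G$ produces a relation $(ud)^{M'} u^{N'}= {\rm lower\ terms}$ of the second required shape.

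The one technical point to handle carefully is the passage from ``every element of $G$ appears among $\langle g_1\rangle X_1$'' to ``every element of $G$ appears among the partial-product degrees of a single word $d^{N}(ud)^{M}$.'' For each $g\in G$ we have $g = g_1^{a_g}\cdot w_g$ with $w_g\in X_1$, i.e. $w_g\in\{(g_2g_1)^i, g_1(g_2g_1)^i\}$; we need one word whose suffixes realize all the $g$'s simultaneously. Since $G$ is finite, take $M$ at least the largest exponent $i$ needed among the $w_g$, and $N$ at least the largest $a_g$; then the word $d^{N}(ud)^{M}$ has among its right-partial-products all of $(ud)^i$, $d(ud)^i$, $d^2(ud)^i$, \dots, $d^{a_g}(ud)^{i}$ for the relevant $i$'s, whose $G$-degrees are exactly $w_g, g_1 w_g, \ldots, g_1^{a_g} w_g = g$. (One must note $X_1$ already contains both the ``even'' and ``odd'' alternating suffixes, so no generality is lost.) This is the step I expect to be the main obstacle — not deep, but requiring a clean bookkeeping of which suffixes of which word realize which coset representatives, and a verification that ``lower terms'' never interferes because we are working modulo $J$. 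Once both relations are in hand, Lemma~\ref{xxlem1.10}(2) finishes the proof.
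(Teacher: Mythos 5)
Your reduction is the same as the paper's (Lemma \ref{xxlem1.8}(2) plus Lemma \ref{xxlem1.10}(2), with the target of putting monomials $d^{N}(ud)^{M}$ and $(ud)^{M'}u^{N'}$ into $J$), but the key step fails. You invoke Lemma \ref{xxlem1.9}(2), which requires all of $G$ to appear among the $G$-degrees of the \emph{right partial products of one fixed factorization} of the word. The right partial products (suffixes) of $d^{N}(ud)^{M}$ are only $(ud)^{i}$ and $d(ud)^{i}$ for $i<M$, together with $d^{k}(ud)^{M}$ for $0\le k\le N$; so their degrees form $\{(g_2g_1)^i,\ g_1(g_2g_1)^i\}\cup\{g_1^{k}(g_2g_1)^{M}\}$, \emph{not} all of $\langle g_1\rangle X_1$. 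Your claim that $d^{2}(ud)^{i},\dots,d^{a_g}(ud)^{i}$ occur among the right partial products is false for $i<M$: after the suffix $d(ud)^{i}$ the next letter to the left is $u$, not $d$. The gap is not cosmetic: take $G=\langle g_1\rangle\times\langle h\rangle\cong{\mathbb Z}/4\times{\mathbb Z}/4$ with $g_2=hg_1^{-1}$, so $g_2g_1=h$. Then $\langle g_1\rangle X_1=G=\langle g_2\rangle X_2$, but the suffix degrees of any $d^{N}(ud)^{M}$ lie in $\langle h\rangle\cup g_1\langle h\rangle\cup \langle g_1\rangle h^{M}$, at most $12$ of the $16$ elements; e.g.\ $g_1^2h$ is never realized (unless $h^M=h$, in which case other elements are missed). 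So Lemma \ref{xxlem1.9}(2) cannot be applied to this word under the stated hypothesis.

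The repair — and the paper's actual argument — is to use Lemma \ref{xxlem1.9}(1) (equivalently Lemma \ref{xxlem1.8}(1)) with a \emph{different factorization for each} $g$, exploiting the normality of $d^2$ rather than the suffix structure of a single word. Set $a=|G|$ and $f=d^{2a}(ud)^{a}$. Given $g\in G$, write $g=g_1^{i}(g_2g_1)^{j}$ with $0\le i,j\le a$ (possible since $\langle g_1\rangle X_1=G$ and all orders divide $a$). Because $d^2$ skew-commutes with $u$ and $d$, one has $f=c\,(ud)^{a-j}d^{2a-i}\cdot d^{i}(ud)^{j}$ for some $c\in\Bbbk^{\times}$, and the right factor $d^{i}(ud)^{j}$ is $G$-homogeneous of degree exactly $g$. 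Hence $f\in AA_g$ for every $g$, so $f\in\bigcap_{g\in G}AA_g\subseteq J$, and then $v=d\,f\,u=d^{2a}(du)^{a+1}\in J$; symmetrically $u^{2a}(ud)^{a}\in J$ using $\langle g_2\rangle X_2=G$ and normality of $u^2$. Lemma \ref{xxlem1.10}(2) then gives $\GKdim A/J\le 1$ and the pertinency bound. In short: your plan needs the rearrangement trick (different right factors of the \emph{same} element for different $g$), not the partial-product criterion of Lemma \ref{xxlem1.9}(2).
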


\begin{proof} Let $A=\mathbb{D}_{\beta}$. By Lemma \ref{xxlem1.8} (2)
it suffices to show that
$$\GKdim A/J\leq 1$$
where $J$ is defined as in \eqref{E2.1.3}. By Lemma \ref{xxlem1.10}(2), 
it suffices to show that $v:=d^{2a} (du)^{a+1}$ and $w:=u^{2a} (ud)^a$ 
are in the ideal $J$, where $a=|G|$. By symmetry, we show only that 
$v$ is in $J$. 

Since $v=d d^{2a}(ud)^a u$, it suffices to show that $f:=d^{2a} 
(ud)^a$ is in $J$. By hypothesis $\langle g_1\rangle X_1=G$, every 
element $g$ in $G$ is of the form  $g_1^i (g_2g_1)^j$ for some 
$a\geq i,j\geq 0$. Since $d^2$ is normal,  we can write $f$ as 
$c(ud)^{a-j} d^{2a-i} \cdot d^i (ud)^j$, for some $c\in \Bbbk^{\times}$ 
with $\deg_G d^i (du)^j=g_1^i (g_2g_1)^j=g$. Then $f\in A A_g$ for all 
$g$, which implies that $f\in J$ as required.
\end{proof}

\begin{lemma}
\label{xxlem2.3} Suppose $G$ is generated by $g_1,g_2$ and 
$\langle g_1^2\rangle =\langle g_2^2\rangle$. Then 
$\langle g_1\rangle X_1=G=\langle g_2\rangle X_2$.
\end{lemma}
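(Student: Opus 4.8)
The plan is to prove the set-theoretic identity $\langle g_1\rangle X_1 = G = \langle g_2\rangle X_2$ purely inside the finite group $G$, using only the two hypotheses that $G = \langle g_1, g_2\rangle$ and that the cyclic subgroups $\langle g_1^2\rangle$ and $\langle g_2^2\rangle$ coincide. By the evident left--right symmetry in the definitions of $X_1$ and $X_2$ (swapping the roles of $g_1$ and $g_2$), it suffices to prove $\langle g_1\rangle X_1 = G$. Unwinding \eqref{E2.1.1}, the set $X_1$ consists of all elements $(g_2 g_1)^i$ and $g_1(g_2 g_1)^i$ for $i \geq 0$, so $\langle g_1\rangle X_1$ contains every element of the form $g_1^{\pm 1}\cdots g_1^{\pm 1}(g_2 g_1)^i$; concretely, since $g_1^{k}(g_2g_1)^i \in \langle g_1\rangle X_1$ for all $k \in \mathbb{Z}$ and $i \geq 0$, the target is to show these products already exhaust $G$.

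First I would let $N := \langle g_1^2 \rangle = \langle g_2^2 \rangle$; this is the key normalizing object. Note $g_1$ normalizes $N$ trivially (it centralizes $g_1^2$, hence $N$), and I would check that $g_2$ also normalizes $N$: indeed $g_2 g_1^2 g_2^{-1} = g_2 (g_2^2)^m g_2^{-1} \in N$ for the appropriate $m$ with $g_1^2 = g_2^{2m}$ inside the cyclic group $N$ — wait, more carefully, since $g_1^2 \in N = \langle g_2^2\rangle$ and $g_2$ centralizes $g_2^2$ hence centralizes $N$, we get $g_2 g_1^2 g_2^{-1} = g_1^2 \in N$. So $N$ is normalized by both generators and hence $N \trianglelefteq G$. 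Then in the quotient $\bar G := G/N$, the images $\bar g_1, \bar g_2$ satisfy $\bar g_1^2 = \bar g_2^2 = 1$, so $\bar G$ is a dihedral-type group generated by two involutions, with every element a word that is an alternating product of $\bar g_1$ and $\bar g_2$; such a word can always be normalized to one of the four shapes $(\bar g_2\bar g_1)^i$, $\bar g_1(\bar g_2\bar g_1)^i$, $(\bar g_2\bar g_1)^i\bar g_2$, or $\bar g_1(\bar g_2\bar g_1)^i\bar g_2 = (\bar g_1\bar g_2)^{i+1}$.

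Next I would lift this back: given arbitrary $g \in G$, its image $\bar g$ equals one of those alternating products, so $g \in \langle g_1, g_2\rangle$ can be written as $g = n \cdot w$ where $n \in N = \langle g_1^2\rangle \subseteq \langle g_1\rangle$ and $w$ is a lift of the normalized alternating word. The crucial reduction is that any such $w$ lies in $\langle g_1 \rangle X_1$: the shapes $(g_2g_1)^i$ and $g_1(g_2g_1)^i$ are literally in $X_1$; the shape $(g_1g_2)^{i+1} = g_1 (g_2 g_1)^i g_2$ requires absorbing a trailing $g_2$, which I handle by observing $g_2 = g_2^2 \cdot g_2^{-1}$ and $g_2^2 \in N = \langle g_1^2\rangle$, so a trailing $g_2$ or $g_2^{-1}$ can be converted, at the cost of an element of $N$, into a trailing $g_1$-power, i.e. keeps us in $\langle g_1\rangle X_1$ after re-normalizing the $g_1$-exponent on the left (this uses that $\langle g_1\rangle X_1$ is closed under left multiplication by $\langle g_1\rangle$ and that $N \subseteq \langle g_1\rangle$ is central enough to slide past). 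Since $n \in \langle g_1\rangle$ as well, we conclude $g = n w \in \langle g_1\rangle X_1$, giving the desired equality.

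The main obstacle I anticipate is the bookkeeping in the last step: controlling how a trailing $g_2^{\pm 1}$ interacts when pushed through an alternating word, since $N$ need not be central in $G$ (only normal), so sliding an $N$-element from the right to the left past a block like $(g_2 g_1)^i$ can change it within $N$. The clean way around this is to work consistently in the quotient $\bar G$ to identify the normalized word shape, and then argue that $\langle g_1\rangle X_1$ is exactly the full preimage under $G \to \bar G$ of $\{$alternating words$\}$ — which, since $N \subseteq \langle g_1\rangle$ and $\bar g_1$ is an involution making the four shapes above exhaust $\bar G$, is all of $G$. I would spell out that the preimage of each of the four $\bar G$-shapes is contained in $\langle g_1\rangle X_1$ by the substitutions $g_2^2 \in N \subseteq \langle g_1\rangle$, and that is the whole content; everything else is routine.
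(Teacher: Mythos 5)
Your proposal is correct and takes essentially the same route as the paper: both pass to the quotient by the normal subgroup generated by $g_1^2$ and $g_2^2$ (which under the hypothesis equals $\langle g_1^2\rangle=\langle g_2^2\rangle$, and is in fact central, since each generator centralizes it), note that the quotient is dihedral so that the image of $X_1$ exhausts it, and lift back using $N\subseteq\langle g_1\rangle$. The only blemish is your ``absorbing a trailing $g_2$'' step, which is both unnecessary (in the quotient every element is already of the form $(\bar g_2\bar g_1)^i$ or $\bar g_1(\bar g_2\bar g_1)^i$, so shapes ending in $\bar g_2$ need not be treated separately) and, as literally written, does not turn a trailing $g_2$ into a $g_1$-power; your closing reformulation via the full preimage of $G/N$ repairs this and is the argument to keep.
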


\begin{proof} Let $N$ be the normal subgroup of $G$ generated by 
$g_1^2$ and $g_2^2$. Then $G/N$ is a dihedral group $D_{2n}$. In 
this case the image of $X_1$ in $G/N$ consists of all elements in 
$G/N$. Then $G=N X_1$. Under the hypothesis, we have 
$N=\langle g_1^2\rangle$. Hence $\langle g_1\rangle X_1=G$. By 
symmetry, $G=\langle g_2\rangle X_2$.
\end{proof}

Now we are ready to prove a part of Theorem \ref{xxthm0.1}.

\begin{proposition}
\label{xxpro2.4}
Retain the notation as in Theorem {\rm{\ref{xxthm0.1}}}. Suppose further 
that $\alpha=0$ and $u$ and $d$ are $G$-homogeneous. Then 
$\Pty(A,H)\geq 2$.
\end{proposition}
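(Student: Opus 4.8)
The plan is to reduce to the combinatorial statement of Lemma \ref{xxlem2.2}, so the real work is verifying its hypothesis $\langle g_1\rangle X_1 = G = \langle g_2\rangle X_2$ in all cases where $u$ and $d$ are $G$-homogeneous. First I would recall that since $A = \mathbb{D}_\beta$ is generated by $d$ and $u$ with $\deg_G d = g_1$ and $\deg_G u = g_2$, inner faithfulness forces $G = \langle g_1, g_2\rangle$. By Lemma \ref{xxlem1.5}, the hypothesis that the $\Bbbk^G$-action has trivial homological determinant is exactly $g_1^2 g_2^2 = 1$ in $G$. The strategy is then to split into the case $G$ cyclic and the case $G$ non-cyclic, and in each case deduce the hypothesis of Lemma \ref{xxlem2.2}.

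When $G$ is cyclic, $g_1$ and $g_2$ commute, so $g_1^2 g_2^2 = 1$ gives $g_1^2 = g_2^{-2}$, hence $\langle g_1^2\rangle = \langle g_2^2\rangle$; Lemma \ref{xxlem2.3} then immediately yields $\langle g_1\rangle X_1 = G = \langle g_2\rangle X_2$, and Lemma \ref{xxlem2.2} finishes this case. When $G$ is non-cyclic, I would again use $g_1^2 g_2^2 = 1$: the normal subgroup $N$ generated by $g_1^2$ and $g_2^2$ satisfies $g_1^2 = (g_2^2)^{-1}$ in $N$ only up to the non-commutativity, so I should argue more carefully — $N$ is generated by the single element $g_1^2$ together with its conjugates, but since $g_2^2 = g_1^{-2}$ as elements of $G$ (using $g_1^2 g_2^2 = 1$ literally, which holds regardless of commutativity), $N$ is actually the normal closure of the single element $g_1^2$. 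In fact $g_1^2 g_2^2 = 1$ forces $g_2^2 = g_1^{-2}$, so $\langle g_1^2\rangle$ already contains $g_2^2$; if moreover $\langle g_1^2\rangle$ is normal (which I would need to check, or else pass to the normal closure and track degrees as in the proof of Lemma \ref{xxlem2.3}), then $\langle g_1^2\rangle = \langle g_2^2\rangle = N$ and Lemma \ref{xxlem2.3} applies verbatim. Thus in every case the hypotheses of Lemma \ref{xxlem2.2} are met and $\Pty(A,H) \geq 2$.

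The step I expect to be the main obstacle is handling the non-cyclic case cleanly: Lemma \ref{xxlem2.3} as stated requires $\langle g_1^2\rangle = \langle g_2^2\rangle$ as subgroups, and while $g_1^2 g_2^2 = 1$ gives $g_2^2 \in \langle g_1^2\rangle$ and $g_1^2 \in \langle g_2^2\rangle$ on the nose, one must be sure no subtlety about normality of $\langle g_1^2\rangle$ enters — but in fact $\langle g_1^2\rangle = \langle g_2^2\rangle$ is automatic from $g_2^2 = g_1^{-2}$, and this single subgroup need not itself be normal for the argument, since Lemma \ref{xxlem2.3} only uses the normal closure $N$ and the observation that $N = \langle g_1^2\rangle$ under this hypothesis. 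So the obstacle is really just bookkeeping: confirming that the dihedral quotient argument of Lemma \ref{xxlem2.3} goes through with $N$ the normal closure of $g_1^2$ and that this normal closure equals $\langle g_1^2\rangle$ precisely when $g_1^2 g_2^2 = 1$. Once that is in hand, the conclusion is immediate from Lemmas \ref{xxlem2.2} and \ref{xxlem2.3}.
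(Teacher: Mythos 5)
Your proposal is correct and is essentially the paper's own argument: Lemma \ref{xxlem1.5} gives $g_1^2g_2^2=1$, i.e.\ $g_2^2=g_1^{-2}$, so $\langle g_1^2\rangle=\langle g_2^2\rangle$ on the nose (no case split into cyclic/non-cyclic and no normality check is needed, since that is exactly the hypothesis of Lemma \ref{xxlem2.3}, whose proof already handles the normal closure), and then Lemmas \ref{xxlem2.3} and \ref{xxlem2.2} yield $\Pty(A,H)\geq 2$.
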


\begin{proof}
By Lemma \ref{xxlem1.5}, when the $\Bbbk^G$-action on $\mathbb{D}_{\beta}$ has 
trivial homological determinant, $g_1^2=g_2^{-2}$. Hence 
$\langle g_1^2\rangle =\langle g_2^2\rangle$. By Lemma \ref{xxlem2.3},
$\langle g_1\rangle X_1=G=\langle g_2\rangle X_2$. Now the main 
assertion follows from Lemma \ref{xxlem2.2}. 
\end{proof}

\subsection{Case 2: $A={\mathbb{H}}$, $x$ and $y$ are $G$-homogeneous}
\label{xxsec2.2}
In this subsection we have that $A={\mathbb{H}}$ and that $x$ and $y$ in 
${\mathbb{H}}$ are $G$-homogeneous. Let $g_1=\deg_G x$ and $g_2=\deg_G y$. 
By the relations of ${\mathbb{H}}$, one sees that $g_1^2=g_2^2$. Two 
relations of ${\mathbb{H}}$ can be written as 
$$x(x^2-y^2)=-(x^2-y^2)x\quad {\rm{and}} \quad y(x^2-y^2)=-(x^2-y^2)y.$$
Define a filtration ${\mathcal F}$ on ${\mathbb{H}}$ by
$$F_i {\mathbb{H}}=(\Bbbk x+\Bbbk y+ \Bbbk z)^i, i\geq 0$$
where $z=x^2-y^2$. It is easy to see that the $G$-coaction
preserves this filtration. Let $B$ be gr$_{\mathcal F} {\mathbb{H}}$.
Then $B\cong (\Bbbk\langle x,y\rangle/(x^2-y^2))[z,\sigma]$
where $\sigma$ maps $x\to -x$ and $y\to -y$. Then $G$ coacts
on $B$ by $\deg_G x=g_1, \deg_G y=g_2$ and $\deg_G z=g_1^2$.
The following lemma follows from \cite[Lemma 3.6]{BHZ1}.

\begin{lemma}
\label{xxlem2.5} Retain the above notation. 
Then $\Pty({\mathbb{H}}, \Bbbk^G)\geq \Pty (B, \Bbbk^G)$.
\end{lemma}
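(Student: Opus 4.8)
\textbf{Proof proposal for Lemma \ref{xxlem2.5}.}

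The plan is to realize $B=\operatorname{gr}_{\mathcal F}{\mathbb{H}}$ as the associated graded algebra of ${\mathbb{H}}$ under the filtration $\mathcal F$ and then invoke the general comparison result \cite[Lemma 3.6]{BHZ1}, which asserts that pertinency cannot increase when one passes to the associated graded algebra with respect to a Hopf-stable filtration. Concretely, I would first check that $\mathcal F$ is an algebra filtration: $F_i{\mathbb{H}}\cdot F_j{\mathbb{H}}\subseteq F_{i+j}{\mathbb{H}}$ holds because $F_i{\mathbb{H}}=(\Bbbk x+\Bbbk y+\Bbbk z)^i$ is defined multiplicatively and $z=x^2-y^2\in F_2{\mathbb{H}}$ is consistent with $\deg z$ being twice $\deg x$; since ${\mathbb{H}}$ is generated by $x$ and $y$, the filtration is exhaustive. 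Next I would verify that the $\Bbbk^G$-action (equivalently the $G$-coaction) preserves $\mathcal F$: because $x,y$ are $G$-homogeneous of degrees $g_1,g_2$ with $g_1^2=g_2^2$, the element $z=x^2-y^2$ is $G$-homogeneous of degree $g_1^2$, so $\Bbbk x+\Bbbk y+\Bbbk z$ is a $G$-subcomodule, and hence so is each of its powers $F_i{\mathbb{H}}$. This gives an induced $G$-coaction on $B$ with $\deg_G x=g_1$, $\deg_G y=g_2$, $\deg_G z=g_1^2$, exactly as stated.

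With these compatibilities in hand, the hypotheses of \cite[Lemma 3.6]{BHZ1} are met — a semisimple Hopf algebra $H=\Bbbk^G$ acting on a filtered algebra ${\mathbb{H}}$ by filtration-preserving automorphisms, with $\operatorname{gr}_{\mathcal F}{\mathbb{H}}=B$ inheriting the $H$-action — and that lemma yields directly
$$\Pty({\mathbb{H}},\Bbbk^G)\geq \Pty(B,\Bbbk^G).$$
The mechanism behind \cite[Lemma 3.6]{BHZ1} that I would rely on is that the ideal $I$ generated by $1\#\inth$ in ${\mathbb{H}}\#\Bbbk^G$ induces, after filtering ${\mathbb{H}}\#\Bbbk^G$ by $\mathcal F$ (with $\Bbbk^G$ in degree $0$), the corresponding ideal in $B\#\Bbbk^G$ inside $\operatorname{gr}(({\mathbb{H}}\#\Bbbk^G)/I)$, so that $\GKdim(B\#\Bbbk^G)/I_B\leq \GKdim({\mathbb{H}}\#\Bbbk^G)/I_{{\mathbb{H}}}$ by the standard fact that GK-dimension does not increase when passing to the associated graded ring of a finite-dimensional (or, here, appropriately bounded) filtration; combined with $\GKdim B=\GKdim{\mathbb{H}}$ (both equal $3$, or can be read off since $B$ is a filtered deformation), this gives the asserted inequality on pertinency.

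The only point requiring genuine care — the ``main obstacle'' — is the identification $B\cong (\Bbbk\langle x,y\rangle/(x^2-y^2))[z,\sigma]$ and the verification that $\mathcal F$ is a \emph{good} filtration, i.e.\ that the associated graded algebra is not larger than expected and that $\GKdim B=\GKdim{\mathbb{H}}$. One must confirm that in $B$ the relations of ${\mathbb{H}}$ degenerate exactly to $x^2=y^2$ together with $zx=-xz$ and $zy=-yz$ (from $x z=-zx$, $yz=-zy$, which are precisely the two displayed relations $x(x^2-y^2)=-(x^2-y^2)x$ and $y(x^2-y^2)=-(x^2-y^2)y$ read in the associated graded), and that no new relations appear. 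A Bergman Diamond Lemma count on $\Bbbk\langle x,y,z\rangle/(x^2-y^2,\,zx+xz,\,zy+yz)$ against the count for ${\mathbb{H}}\cong\mathbb{D}(-2,-1)$ settles this: both have Hilbert series $1/((1-t)^2(1-t^2))\cdot$ (appropriate correction), so the filtration is good and $\operatorname{gr}_{\mathcal F}{\mathbb{H}}$ is the Ore extension claimed. Once that is in place, the inequality is immediate from \cite[Lemma 3.6]{BHZ1}.
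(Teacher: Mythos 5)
Your proposal takes essentially the same route as the paper: the paper sets up the filtration $\mathcal F$, observes that the $G$-coaction preserves it, identifies $B=\operatorname{gr}_{\mathcal F}{\mathbb{H}}$ as the skew extension $(\Bbbk\langle x,y\rangle/(x^2-y^2))[z;\sigma]$, and then deduces the inequality directly from \cite[Lemma 3.6]{BHZ1}, exactly as you do. One small point of care: in the paper's filtration $z=x^2-y^2$ is a \emph{generator} of $F_1{\mathbb{H}}$ (filtration degree $1$, not $2$), which is precisely what makes the relation degenerate to $x^2=y^2$ in the associated graded; this does not affect the validity of your argument.
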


By the above lemma, it suffices to show that $\Pty (B, \Bbbk^G)
\geq 2$. For the rest of the proof we follow the proof in Case 1.

\begin{lemma}
\label{xxlem2.6} 
Let $J$ be an ideal of $B$  containing both 
$x^{2s} (yx)^i$ and $(xy)^j z^t$ for some $i,j,s,t\geq 0$. 
Then $\GKdim B/J\leq 1$.
\end{lemma}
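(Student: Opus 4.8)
\textbf{Proof proposal for Lemma \ref{xxlem2.6}.}

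The plan is to mimic the Diamond Lemma argument of Lemma \ref{xxlem1.10}, but now working in the algebra $B \cong (\Bbbk\langle x,y\rangle/(x^2-y^2))[z,\sigma]$, where $z$ is a central-up-to-$\sigma$ variable and $\sigma(x)=-x$, $\sigma(y)=-y$. First I would fix the ordering $x<y<z$ (or whatever ordering makes $z$ largest, so that the Ore extension relations $zx=-xz$, $zy=-yz$ are reduction rules) and record that, modulo the relation $x^2=y^2$ and the two Ore relations, $B$ has a $\Bbbk$-linear PBW-type basis of monomials of the form $x^a(yx)^b y^c z^e$ with $a,b,c,e\ge 0$ subject to the usual overlap constraints coming from $y^2x = yx^2 \leftrightarrow$ (using $x^2=y^2$) a rewriting that keeps things in the stated normal form — essentially the same combinatorics as in $\mathbb{D}_\beta$ in the $x,y$ part, tensored with a polynomial variable $z$.

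Next I would exploit the hypotheses. Since $J$ contains $x^{2s}(yx)^i$, in $B/J$ we get a relation $x^{2s}(yx)^i = \text{lower terms}$, which (as in Lemma \ref{xxlem1.10}) bounds the exponents in the $x$-$y$ part of a normal-form monomial: any basis monomial of $B/J$ has either its $x$-exponent bounded or its $(yx)$-exponent bounded. Since $J$ contains $(xy)^j z^t$, and $z^t$ is (up to sign, via $\sigma$) normal, I would argue that $(xy)^j z^t = \text{lower terms}$ together with the normality of $z$ forces: once the $z$-exponent is at least $t$ and there is any $xy$-factor present, the monomial reduces to lower terms. Combining, I would conclude that in $B/J$ the $z$-exponent of a surviving normal-form monomial is bounded (whenever the $x$-$y$ part is nonempty in the relevant way), and the $x$-$y$ part has bounded total length. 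Hence for each $\mathbb{N}$-degree $n$, $\dim_\Bbbk (B/J)_n$ is uniformly bounded, so by \cite[(E1.1.6)]{BHZ2}, $\GKdim B/J \le 1$.

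The main obstacle I expect is bookkeeping the interaction between the relation $x^2=y^2$ and the two bounding relations: unlike the clean $\mathbb{D}_\beta$ case, here $x^{2s}(yx)^i$ can be rewritten using $x^2=y^2$ into several equivalent forms, so I need to be careful that the ``lower terms'' really do decrease a well-chosen monomial order and that the Diamond Lemma overlap ambiguities (the overlap of $y^2x$-type rewrites with $x^2=y^2$, and of the new relations with the Ore relations $zx=-xz$) all resolve — or, more economically, I would avoid re-verifying confluence from scratch by invoking that $B$ already has its known PBW basis and only the \emph{two new} relations in $B/J$ need to be used as one-directional reductions to cut down the basis, exactly as in the proof of Lemma \ref{xxlem1.10}(2). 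The normality of $z$ (i.e. $zB=Bz$ as sets) is the key fact that lets the single relation $(xy)^jz^t=\text{lower terms}$ propagate to kill $(\text{anything with an }xy)\cdot z^{\ge t}$, and I would state that explicitly as the crux of the argument.
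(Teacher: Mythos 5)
Your overall strategy (normal-form monomials plus a counting argument giving uniformly bounded graded dimensions, hence $\GKdim \le 1$) is the same as the paper's, and your basis for $B$ is essentially right (with the trailing $y$-exponent reducible to $0$ or $1$ since $y^2=x^2$). But there is a genuine gap: the two hypothesized elements of $J$ do not, by themselves, kill the monomials $x^iz^k$ (and $x^iz^ky$) in which both $i$ and $k$ are large. The relation coming from $x^{2s}(yx)^i$ only applies to monomials containing a $(yx)^i$-factor, and the relation coming from $(xy)^jz^t$ only applies to monomials containing an $(xy)^j$-factor; a pure $x$--$z$ monomial contains neither, so your claim that ``the $x$--$y$ part has bounded total length'' does not follow. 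Since for each ${\mathbb N}$-degree $n$ there are roughly $n/2$ monomials $x^iz^k$ with $i+2k=n$, these alone would give linear growth of $\dim_\Bbbk(B/J)_n$, i.e.\ only $\GKdim B/J\le 2$, which is not enough.

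The missing idea, which is the crux of the paper's proof, is to use that $J$ is an ideal and multiply the two given elements: after reducing to $s=t=i=j=a>0$, one has $f_1=x^{2a}(yx)^a$, $f_2=(xy)^az^a\in J$, and since $x^2=y^2$ is central in the $x,y$-part and $(yx)(xy)=x^4$, the product $f_1f_2$ equals $x^{6a}z^a$ (up to a nonzero scalar), so $x^{6a}z^a\in J$. With this \emph{third} element of $J$, together with the skew-commutativity of $x^2$ and $z$, one checks that any normal-form monomial $x^i(yx)^jz^k$ or $x^i(yx)^jz^ky$ vanishes in $B/J$ as soon as at least two of the indices $i,j,k$ exceed $6a$; only then does the counting argument give uniformly bounded graded dimensions and $\GKdim B/J\le 1$. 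Your proposal needs this product step (or some substitute that bounds the pure $x$--$z$ monomials) to be correct.
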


\begin{proof} 
Without loss of generality, we can assume that $s=t=i=j=:a>0$ 
and re-use letters $i$ and $j$. Let $f_1=x^{2a} (yx)^a$ and 
$f_2=(xy)^a z^a$ in $J$. Then $x^{6a} z^{a}\in \Bbbk f_1f_2 
\subseteq J$. Note that $B$ has a $\Bbbk$-linear basis 
$$\{x^i (yx)^j z^k\mid i,j,k \geq 0\}\cup
\{x^i (yx)^j z^k y\mid i,j,k \geq 0\}.$$
Since $x^2(=y^2)$ and $z$ are skew-commuting with $x,y,z$, 
every element is of the form $x^i (yx)^j z^k$ or
$x^i (yx)^j z^k y$ is 0 in $B/J$ when at least two of 
indices $i,j,k$ are larger than $6a$. An elementary counting argument 
shows that $\GKdim B/J\leq 1$. 
\end{proof}

Use the notation introduced in \eqref{E2.1.1} and 
\eqref{E2.1.2}:
$$X_1:=\{(g_2g_1)^i\mid i\geq 0\}\cup \{g_1(g_2g_1)^i\mid i\geq 0\}\subseteq G$$
and 
$$X_2:=\{(g_1g_2)^i\mid i\geq 0\}\cup \{g_2(g_1g_2)^i\mid i\geq 0\}\subseteq G.$$ 

\begin{lemma}
\label{xxlem2.7} Retain the above notation.
\begin{enumerate}
\item[(1)]
$\langle g_1^2\rangle X_1=G=\langle g_2^2\rangle X_2$.
\item[(2)]
$\Pty(B,\Bbbk^G)\geq 2$.
\item[(3)]
$\Pty({\mathbb{H}}, \Bbbk^G)\geq 2$.  
\end{enumerate}
\end{lemma}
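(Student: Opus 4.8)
The plan is to mirror the Case 1 argument. For part (1), I would recall that $g_1^2 = g_2^2$ in Case 2, so $\langle g_1^2\rangle = \langle g_2^2\rangle =: N$. By the same reasoning as in Lemma \ref{xxlem2.3}, one checks that $N$ is a normal subgroup of $G$ (it is generated by $g_1^2$, and conjugating $g_1^2$ by either generator lands back in $\langle g_1^2, g_2^2\rangle = N$, using $g_1^2=g_2^2$), and that the quotient $G/N$ is dihedral, generated by the images $\bar g_1, \bar g_2$, which are involutions. The image of $X_1$ in $G/N$ is then $\{(\bar g_2\bar g_1)^i\} \cup \{\bar g_1 (\bar g_2\bar g_1)^i\}$, which exhausts $G/N$ since in a dihedral group every element is a power of the rotation $\bar g_2\bar g_1$ or such a power times the reflection $\bar g_1$. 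Hence $G = N X_1$, and since $N = \langle g_1^2\rangle$, we get $\langle g_1^2\rangle X_1 = G$. By the $x\leftrightarrow y$ symmetry, $\langle g_2^2\rangle X_2 = G$ as well.

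For part (2), I would follow Lemma \ref{xxlem2.2} almost verbatim, replacing $\mathbb{D}_\beta$ by $B$, the element $d$ by $x$, $u$ by $y$, and the normal element $d^2$ by the normal (skew-central) elements $x^2 = y^2$ and $z$. Setting $a = |G|$, the goal is to show that suitable elements of the form $x^{2a}(yx)^{a+1}$ and $(xy)^j z^t$ lie in the ideal $J$ of $B$ generated by $\bigcap_{g\in G} B B_g$, so that Lemma \ref{xxlem2.6} forces $\GKdim B/J \le 1$, and then Lemma \ref{xxlem1.8}(2) (applied with $A = B$) gives $\Pty(B,\Bbbk^G) \ge d - \GKdim B/J \ge 3 - 1 = 2$. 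To put $x^{2a}(yx)^a$ into $J$: by part (1), every $g\in G$ has the form $g_1^{2m} \cdot (g_1^i (g_2g_1)^j)$ with the second factor the $G$-degree of $x^i (yx)^j$ (for suitable bounded $i,j$, using $\bar g_1^2 = 1$ in $G/N$); since $x^2$ and $z$ skew-commute past everything, one can slide the even power $x^{2m}$ (or a $z$-power — whichever realizes $g_1^{2m}$, noting $\deg_G z = g_1^2$ too) to the left and rewrite $x^{2a}(yx)^a$ as $(\text{stuff})\cdot(\text{element of }G\text{-degree }g)$ up to a scalar, so it lies in $B B_g$ for every $g$, hence in $J$. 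The $(xy)^j z^t$ element is handled symmetrically using $\langle g_2^2\rangle X_2 = G$. Part (3) is then immediate from part (2) together with Lemma \ref{xxlem2.5}.

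The main obstacle I anticipate is bookkeeping in part (2): making sure the even powers that must be slid to the left are actually available. In Case 1 one always has $d^{2a}$ literally sitting at the front of $f = d^{2a}(ud)^a$; here the analogous element $x^{2a}(yx)^a$ has $x^{2a}$ in front, but to realize an arbitrary $g = g_1^{2m}\cdot h$ one needs a skew-central element of $G$-degree $g_1^{2m}$ to commute out — this is fine since $(x^2)^m$ has $G$-degree $g_1^{2m}$ and is skew-central, but one must confirm the exponent $m$ stays bounded by $a = |G|$ (it does, since $g_1^{2m}$ ranges over $N = \langle g_1^2\rangle$, a group of order at most $|G|$) and that the leftover factor $x^i(yx)^j$ with bounded $i,j$ genuinely covers a set of coset representatives for $N$ in $G$ — precisely the content of part (1). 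A secondary point is that Lemma \ref{xxlem1.8} is stated for $A$, and one should note it applies equally with $B$ in the role of $A$ since $B$ is a $G$-graded (equivalently $\Bbbk^G$-comodule) algebra with $\GKdim B = \GKdim{\mathbb{H}} = 3$; alternatively, one invokes the $J$-version of that lemma as recorded in \eqref{E2.1.3} and the surrounding discussion. Everything else is the same skew-commutation-and-counting routine already carried out in Lemmas \ref{xxlem2.2} and \ref{xxlem2.6}.
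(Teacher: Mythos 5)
Your proposal is correct and follows the paper's own proof essentially step for step: part (1) is the same dihedral-quotient argument as in Lemma \ref{xxlem2.3}; part (2) puts exactly the elements $x^{2a}(yx)^a$ and $(xy)^a z^a$ (with $a=|G|$) into $J$ by sliding the central element $x^2=y^2$ and the skew-central $z$ past the other generators, then invokes Lemma \ref{xxlem2.6} and Lemma \ref{xxlem1.8}(2) applied to $B$; and part (3) is Lemma \ref{xxlem2.5}, all as in the paper. The one point to repair is your parenthetical normality argument in (1): it is not true in general that conjugates of $g_1^2$ lie in $\langle g_1^2,g_2^2\rangle$; the correct reason $\langle g_1^2\rangle$ is normal here is that $G$-homogeneity of the relations of ${\mathbb{H}}$ (both $x^2y$ and $yx^2$ occur in a single relation) forces $g_1^2=g_2^2$ to be central in $G$ --- which is also what the paper's identification $NX_1=\langle g_1^2\rangle X_1$ implicitly uses.
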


\begin{proof} (1) Let $N$ be the normal subgroup of $G$ 
generated by $g_1^2$ (or by $g_2^2$). Then $G/N$ is a 
dihedral group $D_{2n}$. In this case the image of $X_1$ 
in $G/N$ consists of all elements in $G/N$. Then 
$G=N X_1=\langle g_1^2\rangle X_1$. 
Similarly, $G=\langle g_2^2\rangle X_2$.

(2) By 
Lemma \ref{xxlem1.8}(2) it suffices to show that
$$\GKdim B/J\leq 1$$
where $J$ is the ideal of $B$ generated by $\bigcap_{g\in G} B B_g$.
By Lemma \ref{xxlem2.6}, it suffices to show that $f_1:=x^{2a} (yx)^a$ 
and $f_2:=(xy)^a z^a$ are in the ideal $J$, where $a=|G|$. By part (1), 
$\langle g_1\rangle X_1=G$, every element in $G$ is of the form 
$g_1^i (g_2g_1)^j$ for some $0\leq i,j\leq a$. By the fact that 
$x^2$ commutes with $y$, we obtain that $f_1=f_1' (x^i (yx)^j)$ 
for some $f_1'\in B$. Then $f_1\in B B_{g}$ for all $g$, which 
implies that $f_1\in J$. Since $z$ skew-commutes with $x$ and $y$, 
a similar argument shows that $f_2\in J$. Now the assertion 
follows by Lemma \ref{xxlem2.6}.

(3) This follows from part (2) and Lemma \ref{xxlem2.5}. 
\end{proof}

Part (3) of the above lemma says that Auslander's theorem 
holds in this case, even without the hypothesis that the homological
determinant of the $H$-action is trivial in this special case. 
For the sake of completeness we calculate the homological
(co)determinants of the $G$-coactions easily in the next lemma.

\begin{lemma}
\label{xxlem2.8} Suppose a finite group $G$ coacts on $A$.
\begin{enumerate}
\item[(1)] 
If $A={\mathbb{H}}$ and $x$ and $y$ are $G$-homogeneous with $G$-degree 
$g_1$ and $g_2$ respectively, then the homological codeterminant of the 
$G$-coaction is $g_1^4$, which is also $g_2^4$.
\item[(2)] 
If $A={\mathbb{F}}$ and $x$ and $y$ are $G$-homogeneous with $G$-degree 
$g_1$ and $g_2$ respectively, then the homological codeterminant of 
the $G$-coaction is $g_1^4$, which is also $g_2^4$.
\item[(3)]
Let $A=\mathbb{D}(\alpha, -1)$, for $\alpha\neq 2$, and $x=\frac{1}{2}(d+u)$ and 
$y=\frac{1}{2}(d-u)$. By \cite[Proposition 1.12(4)]{CKZ}, 
$A$ is generated by $x$ and $y$ and subject to relations
\begin{align}
\label{E2.8.1}\tag{E2.8.1}
\alpha x^2 y+(-2-\alpha) xyx+ \alpha yx^2 +(2-\alpha) y^3&=0,\\
\label{E2.8.2}\tag{E2.8.2}
(2-\alpha) x^3+ \alpha xy^2+(-2-\alpha) yxy+ \alpha y^2 x&=0.
\end{align}
Suppose that $G$ is abelian and that $x$ and $y$ are $G$-homogeneous 
with $G$-degree $g_1$ and $g_2$ respectively. Then the homological
codeterminant of the $G$-coaction is $g_1^4$, which is also $g_2^4$.
\end{enumerate}
\end{lemma}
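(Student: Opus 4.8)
The plan is to run, in each of the three cases, the same ${\mathbb Z}\times G$-graded $\Ext$-computation as in the proof of Lemma \ref{xxlem1.5}. In all three cases $A$ is Artin--Schelter regular of global dimension three, minimally generated in degree one by $x$ and $y$, and presented by two relations of degree three; by hypothesis $x$ and $y$ are $G$-homogeneous, so put $g_1=\deg_G x$ and $g_2=\deg_G y$. Then $A$ is ${\mathbb Z}\times G$-graded, and by definition the homological codeterminant of the $G$-coaction is the $G$-degree of a basis element ${\mathfrak e}$ of $\Ext^3_A(\Bbbk,\Bbbk)$, regarded as a ${\mathbb Z}\times G$-graded vector space. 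So everything comes down to writing out the ${\mathbb Z}\times G$-graded minimal free resolution of the trivial module $\Bbbk$.

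First I would observe that in each presentation one defining relation contains $x^3$ with a nonzero coefficient and the other contains $y^3$ with a nonzero coefficient: this is clear for $x^3=yxy$, $y^3=xyx$ of ${\mathbb F}$, for $x^2y+yx^2-2y^3=0$ and $-2x^3+xy^2+y^2x=0$ of ${\mathbb H}$, and for \eqref{E2.8.1}--\eqref{E2.8.2} of $\mathbb{D}(\alpha,-1)$ (the coefficients $2-\alpha$ of $y^3$ in \eqref{E2.8.1} and of $x^3$ in \eqref{E2.8.2} are nonzero because $\alpha\neq 2$). Hence the relation containing $x^3$ is $G$-homogeneous of $G$-degree $g_1^3$ and the one containing $y^3$ of $G$-degree $g_2^3$. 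Comparing the $G$-degrees of the remaining monomials in these relations then forces an identity in $G$; in every case this yields $g_1^4=g_2^4$ (in the ${\mathbb H}$ case and in the $\mathbb{D}(\alpha,-1)$ case --- where $G$ is abelian --- the mixed cubic terms even give $g_1^2=g_2^2$; for ${\mathbb F}$ one eliminates $g_1g_2$ between $g_1^3=g_2g_1g_2$ and $g_2^3=g_1g_2g_1$ to get $g_1^4=g_2^4$ directly).

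Next I would write down the minimal ${\mathbb Z}\times G$-graded free resolution of $\Bbbk$:
$$0\to A(g_1^{-4})\to A(g_1^{-3})\oplus A(g_2^{-3})\to A(g_1^{-1})\oplus A(g_2^{-1})\to A\to\Bbbk\to 0,$$
whose nonzero terms record, from the left, the unique linear syzygy, the two cubic relations, and the two degree-one generators. Applying $\Hom_A(-,\Bbbk)$ gives $\Ext^3_A(\Bbbk,\Bbbk)\cong\Bbbk(g_1^4)$ as a $G$-graded vector space, which is also $\Bbbk(g_2^4)$ since $g_1^4=g_2^4$. Thus the $G$-coaction sends a basis element ${\mathfrak e}$ to ${\mathfrak e}\otimes g_1^4$, so by definition the homological codeterminant equals $g_1^4$, which is also $g_2^4$; this is precisely the assertion of parts (1), (2) and (3).

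The point that needs care --- and I expect it to be the main obstacle --- is that the ${\mathbb Z}$-degree alone does not pin down the leftmost term of the resolution: several $G$-degrees of ${\mathbb Z}$-degree $4$ (such as $g_1^4$, $g_1^3g_2$, $g_1g_2^3$) are a priori compatible with the $G$-degrees $g_1^3$ and $g_2^3$ of the two relations. To isolate $g_1^4$ I would, as in Lemma \ref{xxlem1.10}, use Bergman's Diamond Lemma \cite{Be}: one checks that the unique-up-to-scalar linear syzygy between the two cubic relations is obtained by multiplying the relation containing $x^3$ on the left by $x$ and the relation containing $y^3$ on the left by a scalar multiple of $y$ --- for ${\mathbb F}$, concretely, $x$ times $x^3-yxy$ plus $y$ times $y^3-xyx$ reduces to zero in the degree-one free module $A(g_1^{-1})\oplus A(g_2^{-1})$, and the same pattern (with different scalars) holds for ${\mathbb H}$ and $\mathbb{D}(\alpha,-1)$ because those relations are again cubic and palindromic in $x$ and $y$. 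The resulting syzygy element then has $G$-degree $g_1\cdot g_1^3=g_1^4$, which agrees with $g_2\cdot g_2^3=g_2^4$ exactly because of the identity $g_1^4=g_2^4$ extracted in the previous step; once this compatibility is in place, the resolution above is confirmed and the three parts follow at once.
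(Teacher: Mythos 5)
Your proposal is correct and takes essentially the same route as the paper: the paper's proof of Lemma \ref{xxlem2.8} simply says the argument is the same as for Lemma \ref{xxlem1.5}, i.e.\ compute the ${\mathbb Z}\times G$-graded minimal free resolution of $\Bbbk$ and read off the $G$-degree of $\Ext^3_A(\Bbbk,\Bbbk)$, which is exactly what you do. The details you supply are sound: $G$-homogeneity of the cubic relations (using $2-\alpha\neq 0$ and that $A$ is a domain) gives relation degrees $g_1^3$ and $g_2^3$ together with $g_1^4=g_2^4$, and the degree-$4$ syzygy is indeed of the form $x\otimes(\text{relation containing }x^3)+\lambda\, y\otimes(\text{relation containing }y^3)$ in all three cases (for $\mathbb{D}(\alpha,-1)$ one can also see this by transporting the known syzygy $d\otimes r_2+u\otimes r_1$ through the linear change of variables $d=x+y$, $u=x-y$), so the top term is $A(g_1^{-4})$ and the codeterminant is $g_1^4=g_2^4$.
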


\begin{proof} Since the proofs are similar to the proof of Lemma 
\ref{xxlem1.5}, the details are omitted.
\end{proof}

\subsection{Case 3: $G$ is abelian}
\label{xxsec2.3}
Let $G$ be a finite abelian group and let $\widehat{G}$ 
be the character group $\Hom_{groups} (G, \Bbbk^{\times})$.
Since $\Bbbk$ is algebraically closed of characteristic zero,
$\widehat{G}$ is isomorphic to $G$ as an abstract group.
As a consequence, $(\Bbbk G)^{*}$ is isomorphic to
$\Bbbk G$ as a Hopf algebra.

Let $A$ be a down-up algebra $\mathbb{D}(\alpha,\beta)$ generated 
by $d$ and $u$. Every graded algebra automorphism $g$ of
$A$ can be written as a $2\times 2$-matrix with respect to
the basis $\{d,u\}$. We say $g$ is {\it diagonal} (respectively, 
{\it non-diagonal}) if its matrix presentation with respect 
to $\{d,u\}$ is diagonal (respectively, non-diagonal).
When the basis $\{d,u\}$ is replaced by 
$\{d',u'\}=\{c_1 d, c_2 u\}$ for some $c_1,c_2\in \Bbbk^{\times}$, 
the matrix presentation of $g$ could change accordingly, but the
diagonal property of $g$ will not change. We call this kind of change of 
basis a {\it scalar base change} which we use in the proof of Lemma 
\ref{xxlem2.9}.

Let $G$ be a finite abelian group that coacts on $A$ inner-faithfully
and homogeneously. This $G$-coaction on $A$ is equivalent to a 
$\widehat{G}$-action on $A$ preserving the ${\mathbb N}$-grading. 
Therefore we can consider the $\widehat{G}$-action instead of the
$G$-coaction. The theorem of Auslander was proved for finite group 
actions on graded noetherian down-up algebras in 
\cite[Theorem 0.6]{BHZ2} except for the case $A=\mathbb{D}(\alpha, -1)$ 
for $\alpha\neq 2$. In fact their proof \cite[Proof of Theorem 0.6]{BHZ2}
works for any diagonal automorphisms of $\mathbb{D}(\alpha,-1)$, too, and 
\cite[Proposition 4.6]{HZ} handles another special class of groups 
acting on $A=\mathbb{D}(\alpha, -1)$ for $\alpha\neq 2$. In this subsection 
we prove Auslander's theorem only for a finite abelian group 
$\widehat{G}$ of graded automorphisms of $\mathbb{D}(\alpha,-1)$ with 
$\alpha\neq 2$ that is not all diagonal. Combining with the results 
in \cite{BHZ2}, we take care of all abelian groups (including cyclic 
ones). 

Throughout the rest of this subsection let $A$ be $\mathbb{D}(\alpha, -1)$ 
for some $\alpha\neq 2$. The next lemma classifies all possible 
finite abelian groups that are not diagonal having trivial homological
determinant.

\begin{lemma}
\label{xxlem2.9}
Consider the following subgroup of ${\rm{GL}}_2(\Bbbk)$ 
$$T=\left\{ 
\begin{pmatrix}a &0 \\ 0 & a\end{pmatrix},
\begin{pmatrix}0 &b \\ b & 0\end{pmatrix}: a,b\in \{\pm 1, \pm i\}\right\}.$$
The following hold.
\begin{enumerate}
\item[(1)]
$T$ is an abelian group acting naturally on $A$, with respect to the basis 
$\{d,u\}$,  inner-faithfully and homogeneously with trivial homological 
determinant. 
\item[(2)]
Let $\widehat{G}$ be a finite abelian group acting on $A$ 
inner-faithfully and homogeneously with trivial homological 
determinant. If $\widehat{G}$ contains a non-diagonal matrix,
then $\widehat{G}$ is a subgroup of $T$ after a scalar base change.
\item[(3)]
Let $\widehat{G}$ be as in part {\rm{(2)}}. Then, up to a scalar base change,
$\widehat{G}$ is one of the following.
$$\left\{ 
\begin{pmatrix}1 &0 \\ 0 & 1\end{pmatrix},
\begin{pmatrix}0 &1 \\ 1 & 0\end{pmatrix}\right\},\quad 
\left\{ 
\begin{pmatrix}a &0 \\ 0 & a\end{pmatrix},
\begin{pmatrix}0 &b \\ b & 0\end{pmatrix}: a,b\in \{\pm 1\}\right\},$$
$$\left\{ 
\begin{pmatrix}a &0 \\ 0 & a\end{pmatrix},
\begin{pmatrix}0 &b \\ b & 0\end{pmatrix}: a\in \{\pm 1\}, b\in \{\pm i\}\right\},
\quad {\rm{or}} \quad T.$$
\end{enumerate}
\end{lemma}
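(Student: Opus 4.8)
The plan is to prove the three parts of Lemma~\ref{xxlem2.9} in sequence, starting from the classification of graded automorphisms of $\mathbb{D}(\alpha,\beta)$ in \cite{KK} and the homological (co)determinant computation already available to us. Throughout I work with the $\widehat{G}$-action (the dual of the $G$-coaction), and I use that the homological determinant of an automorphism group action on $A=\mathbb{D}(\alpha,-1)$ is multiplicative in the group, so it suffices to understand the homological determinant of a single $g\in\widehat{G}$. For a \emph{diagonal} automorphism $g=\mathrm{diag}(a,b)$ acting by $d\mapsto ad$, $u\mapsto bu$, the homological determinant can be read off from the $\mathbb{Z}$-graded minimal free resolution of $\Bbbk$ (the same resolution as in Lemma~\ref{xxlem1.5}): it equals $a^2b^2$. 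For a \emph{non-diagonal} $g$ one reduces to the diagonal case after passing to eigenvectors, or one computes directly that the induced action on $\Ext^3_A(\Bbbk,\Bbbk)$ is multiplication by $(\det g)^2$; in either case the conclusion is that $g$ has trivial homological determinant if and only if $(\det g)^2=1$, i.e.\ $\det g\in\{\pm1\}$.

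For part~(1) I would verify directly that $T$ is closed under multiplication (the product of two scalar matrices is scalar, of two anti-diagonal matrices is scalar, and of a scalar and an anti-diagonal one is anti-diagonal), that it is abelian (scalar matrices are central in $\mathrm{GL}_2$, and two anti-diagonal matrices $\mathrm{antidiag}(b,b)$, $\mathrm{antidiag}(b',b')$ commute because their product is $bb'I$ either way), and that the action on $A$ preserves the relations \eqref{E0.0.1}--\eqref{E0.0.2} with $\alpha$ arbitrary and $\beta=-1$. The key point is that the anti-diagonal generator, which swaps $d\leftrightarrow u$ up to the scalar $b$, sends relation \eqref{E0.0.1} to a scalar multiple of \eqref{E0.0.2} precisely when $\beta=-1$: carrying out the substitution $d\mapsto bu$, $u\mapsto bd$ in $d^2u-\alpha dud-\beta ud^2$ gives $b^3(u^2d-\alpha udu-\beta du^2)$, which is $-b^3\beta^{-1}$ times relation~\eqref{E0.0.2} exactly when $\beta=-1$. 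Inner-faithfulness is immediate since the anti-diagonal element already does not act as a scalar. Finally, each generator has $\det\in\{\pm1,\mp1\}$ up to sign---$\det\mathrm{diag}(a,a)=a^2\in\{1,-1\}$ and $\det\mathrm{antidiag}(b,b)=-b^2\in\{-1,1\}$---so $(\det g)^2=1$ for all $g\in T$, whence the homological determinant is trivial.

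For parts~(2) and~(3), suppose $\widehat{G}$ is finite abelian, acts inner-faithfully and homogeneously with trivial homological determinant, and contains a non-diagonal element $h$. By \cite{KK} (and the analysis of which automorphisms preserve $\mathbb{D}(\alpha,-1)$, as used in Lemma~\ref{xxlem2.1}) every graded automorphism of $\mathbb{D}(\alpha,-1)$ with $\alpha\neq2$ that is not diagonal is, after a scalar base change $\{d,u\}\rightsquigarrow\{c_1d,c_2u\}$, anti-diagonal of the form $\mathrm{antidiag}(b,b)$; pin down the allowed $b$ using that $h$ preserves the relations and that $h$ has finite order, which forces $b$ to be a root of unity, and then trivial homological determinant forces $(\det h)^2=b^4=1$, i.e.\ $b\in\{\pm1,\pm i\}$. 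Now I claim every diagonal $g=\mathrm{diag}(a_1,a_2)\in\widehat{G}$ is in fact scalar: commutativity of $g$ with the fixed anti-diagonal $h$ forces $a_1=a_2$ (conjugating $\mathrm{diag}(a_1,a_2)$ by $\mathrm{antidiag}(b,b)$ swaps the two diagonal entries), so $g=\mathrm{diag}(a,a)$ with $a$ a root of unity, and trivial homological determinant gives $a^4=1$, so $a\in\{\pm1,\pm i\}$. Any non-diagonal element of $\widehat{G}$ is $g\cdot h$ for some diagonal $g$ if and only if $\widehat G$ contains $h$; more simply, every element of $\widehat{G}$ is either scalar with entry in $\{\pm1,\pm i\}$ or anti-diagonal $\mathrm{antidiag}(b,b)$ with $b\in\{\pm1,\pm i\}$ (the latter because any two non-diagonal elements have diagonal product, so a single non-diagonal coset representative suffices). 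This places $\widehat{G}\subseteq T$, proving~(2). For~(3), enumerate the subgroups of $T$ that contain a non-diagonal element: $T$ has order $8$, its diagonal part is the cyclic group $\{\mathrm{diag}(a,a):a^4=1\}\cong C_4$ and each subgroup is determined by (the order of) its intersection with this $C_4$ together with a choice of anti-diagonal coset representative; running through the cases---diagonal part trivial, of order $2$, or all of $C_4$---and using that a scalar base change may further normalize the chosen anti-diagonal generator's scalar $b$, yields exactly the four groups listed. I expect the main obstacle to be part~(2): one must be careful that the ``scalar base change'' is genuinely enough to bring an arbitrary non-diagonal automorphism of $\mathbb{D}(\alpha,-1)$ into anti-diagonal form (this is where the hypothesis $\alpha\neq2$ and the explicit automorphism group from \cite{KK} are essential, since for generic $\alpha$ the automorphism group is smaller, and for $\alpha=2$ it is larger), and that commutativity then really does kill every off-diagonal degree of freedom in the diagonal elements.
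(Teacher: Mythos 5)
Your proposal is correct and follows essentially the same route as the paper: the paper's proof of parts (1) and (3) is likewise a direct computation/enumeration, and for part (2) it uses exactly your chain of ideas---commutativity with a non-diagonal element forces diagonal elements to be scalar, the homological determinant values (hdet $=\det^2$, i.e.\ $a^4$ for $\mathrm{diag}(a,a)$ and $b^4$ for the anti-diagonal element, quoted from \cite[Theorem 1.5]{KK}) pin the entries to fourth roots of unity, and a scalar base change normalizes the anti-diagonal element to equal entries, after which any further non-diagonal element is forced into $T$. Your handling of the remaining non-diagonal elements (factoring them as a scalar times the normalized anti-diagonal one) is only cosmetically different from the paper's direct commutation argument.
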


\begin{proof} (1) This follows by a direct computation.

(2) Suppose that $f:=\begin{pmatrix}a &0 \\ 0 & d\end{pmatrix}$
and $g:=\begin{pmatrix}0 &b \\ c & 0\end{pmatrix}$ are in $\widehat{G}$.
The commutativity of $G$ forces  $a=d$. By \cite[Theorem 1.5]{KK},
the homological determinant of $\begin{pmatrix}a &0 \\ 0 & a\end{pmatrix}$
is $a^4$. Thus $a\in \{\pm 1, \pm i\}$ as $\widehat{G}$ has trivial 
homological determinant. In other words, $f\in T$. After a scalar 
base change, we may assume that $b=c$ in the matrix $g$. By 
\cite[Theorem 1.5]{KK}, the homological determinant of $g$ (with $b=c$)
is $b^4$. Then $b\in \{\pm 1, \pm i\}$ and $g\in T$. Now assume that
$\widehat{G}$ contains another non-diagonal automorphism 
$h:=\begin{pmatrix}0 &c' \\ c & 0\end{pmatrix}$. Then 
the equation $gh=hg$ implies that $c'=c$. So $h\in T$ and $\widehat{G}$ is a 
subgroup of $T$. 

(3) This follows by a direct computation.
\end{proof}

Using the classification in Lemma \ref{xxlem2.9}, we can work out 
the corresponding coactions. Let $x=\frac{1}{2}(d+u)$ and 
$y=\frac{1}{2}(d-u)$, or equivalently, $d=x+y$ and $u=x-y$. By the 
proof of \cite[Proposition 1.12(4)]{CKZ}, we have the following.

\begin{lemma}
\label{xxlem2.10}
Suppose $G$ is a finite abelian group coacting on $A:=\mathbb{D}(\alpha,-1)$, 
for $\alpha\neq 2$, such that 
\begin{enumerate}
\item[(a)]
the $G$-coaction has trivial homological codeterminant, and 
\item[(b)]
the corresponding $\widehat{G}$-action contains a non-diagonal matrix 
with respect to the basis $\{d,u\}$. 
\end{enumerate}
Then the following hold.
\begin{enumerate}
\item[(1)]
There are linearly independent elements
$x$ and $y$ of $\mathbb{D}(\alpha, -1)$ of degree one
such that
$$\begin{aligned}
\alpha x^2 y+(-2-\alpha) xyx+ \alpha yx^2 +(2-\alpha) y^3&=0,\\
(2-\alpha) x^3+ \alpha xy^2+(-2-\alpha) yxy+ \alpha y^2 x&=0
\end{aligned}
$$
and  $x$ and $y$ are $G$-homogeneous.
\item[(2)]
Let $\deg_G x=g_1$, $\deg_G y=g_2$. Then $g_1^2=g_2^2$ and 
$g_1^4=g_2^4=1$ in $G$. 
\end{enumerate}
\end{lemma}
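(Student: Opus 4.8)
The plan is to deduce the lemma directly from the classification in Lemma~\ref{xxlem2.9} by diagonalising the group and then reading off the degrees of the resulting basis.  Hypotheses (a) and (b) are precisely the hypotheses of Lemma~\ref{xxlem2.9}(2), so after a scalar base change $\{d,u\}\mapsto\{d',u'\}=\{c_1d,c_2u\}$ (with $c_1,c_2\in\Bbbk^{\times}$) the action group $\widehat{G}$ is a subgroup of the group $T$ of Lemma~\ref{xxlem2.9}.  Thus, written in the basis $\{d',u'\}$, every element of $\widehat{G}$ is either the scalar matrix with diagonal entry $a\in\{\pm1,\pm i\}$, or the anti-diagonal matrix sending $d'\mapsto bu'$, $u'\mapsto bd'$ for some $b\in\{\pm1,\pm i\}$.

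For part~(1), set $x=\tfrac12(d'+u')$ and $y=\tfrac12(d'-u')$; these are linearly independent elements of $A$ of degree one.  The scalar rescaling $d\mapsto c_1d$, $u\mapsto c_2u$ is a graded algebra automorphism of $\mathbb{D}(\alpha,-1)$, since it carries each relation \eqref{E0.0.1}--\eqref{E0.0.2} (with $\beta=-1$) to a nonzero scalar multiple of itself; hence $A$ is presented by $d',u'$ subject to the same relations, i.e.\ is again $\mathbb{D}(\alpha,-1)$.  Therefore Lemma~\ref{xxlem2.8}(3) (equivalently \cite[Proposition~1.12(4)]{CKZ}), applied with $d',u'$ in place of $d,u$, shows that $A$ is generated by $x$ and $y$ subject to the two relations displayed in part~(1).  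It remains to see that $x$ and $y$ are $G$-homogeneous, i.e.\ common eigenvectors for $\widehat{G}$: a scalar generator acts on each of $x,y$ by the scalar $a$, while the anti-diagonal generator above sends $x\mapsto bx$ and $y\mapsto -by$.  Hence $\Bbbk x$ and $\Bbbk y$ are $\widehat{G}$-stable, so $x$ and $y$ are $G$-homogeneous, which proves~(1).

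For part~(2), write $\deg_Gx=g_1$ and $\deg_Gy=g_2$, so that under the canonical identification $G\cong\widehat{\widehat{G}}$ each $\chi\in\widehat{G}$ acts by $\chi\cdot x=\chi(g_1)\,x$ and $\chi\cdot y=\chi(g_2)\,y$.  By the eigenvalue computation above, on a scalar generator these two eigenvalues both equal $a$, and on an anti-diagonal generator they equal $b$ and $-b$; in either case they have equal squares, and those squares are fourth roots of unity.  Since $\widehat{G}$ is generated by such elements and the maps $\chi\mapsto\chi(g_1)$ and $\chi\mapsto\chi(g_2)$ are group homomorphisms $\widehat{G}\to\Bbbk^{\times}$, we obtain $\chi(g_1^2)=\chi(g_2^2)$ and $\chi(g_1^4)=\chi(g_2^4)=1$ for every $\chi\in\widehat{G}$; as characters separate the points of the finite abelian group $G$, this gives $g_1^2=g_2^2$ and $g_1^4=g_2^4=1$.  (Alternatively, once $x,y$ are known to be $G$-homogeneous, $g_1^4=g_2^4=1$ is immediate from Lemma~\ref{xxlem2.8}(3) together with hypothesis~(a).)  I expect the only delicate point to be the reduction in the first two paragraphs — quoting Lemma~\ref{xxlem2.9}(2) with the correct hypotheses and checking that the scalar base change leaves the presentation $\mathbb{D}(\alpha,-1)$ intact so that Lemma~\ref{xxlem2.8}(3) applies verbatim to $d',u'$; the remainder is routine eigenvalue bookkeeping.
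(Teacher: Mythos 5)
Your proposal is correct and follows essentially the same route as the paper: invoke the classification of Lemma \ref{xxlem2.9} (after a scalar base change), pass to $x=\tfrac12(d'+u')$, $y=\tfrac12(d'-u')$, observe these are common $\widehat{G}$-eigenvectors so they are $G$-homogeneous, and quote \cite[Proposition 1.12(4)]{CKZ} (Lemma \ref{xxlem2.8}(3)) for the two relations. The only difference is cosmetic: for part (2) the paper reads $g_1^2=g_2^2$ off the relations using $\alpha\neq 2$ and gets $g_1^4=g_2^4=1$ from Lemma \ref{xxlem2.8}(3) with hypothesis (a), whereas you obtain both by eigenvalue bookkeeping over $T$ together with the fact that characters separate points of $G$ — an equally valid derivation under the standing assumption (as in Subsection \ref{xxsec2.3}) that $\Bbbk$ is algebraically closed of characteristic zero.
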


\begin{proof} (1) A part of the proof appeared in the proof of 
\cite[Proposition 1.12]{CKZ}, so we give only a sketch of the 
argument here.

First, the $\widehat{G}$-action on $A$ has the special forms 
as listed in Lemma \ref{xxlem2.9}(3). Using the forms given 
there, let $x=\frac{1}{2}(d+u)$ and $y=\frac{1}{2}(d-u)$, or 
$d=x+y$ and $u=x-y$. Then both $x$ and $y$ are $\widehat{G}$-eigenvectors.
This means that both $x$ and $y$ are $G$-homogeneous in the 
corresponding $G$-coaction. The two relations are obtained in the 
proof of \cite[Proposition 1.12]{CKZ} by direct computation, 
which we will not repeat here.

(2) By the relations and the hypothesis that $\alpha\neq 2$, 
one sees that $g_1^2=g_2^2$. The second assertion is Lemma 
\ref{xxlem2.8}(3).
\end{proof}

Ueyama \cite{U1} introduced the notion of a {\em graded isolated 
singularity}, and we recall his definition here. For a graded 
algebra $A$, let $\grmod A$ denote the category of finitely 
generated graded left $A$-modules. For a graded finitely 
generated $A$-module  an element $x \in M$ is called {\em torsion} 
if there exists a positive integer $n$ such that $ A_{\geq n} x= 0$. 
The module $M$ is called a {\em torsion module} if every 
element of $M$ is torsion. Let $\tors A$ denote the full 
subcategory of $\grmod A$ consisting of torsion modules.
We can then define the quotient 
category $\tails A = \grmod A / \tors A$. Following \cite{U1}, 
we say that $A^G$ has a {\em graded isolated singularity} 
if $\gldim(\tails A^G) < \infty$. Mori and Ueyama prove that 
if the Auslander map is an isomorphism, then $A^G$ has a 
graded isolated singularity if and only if $A\#G/I $ is 
finite-dimensional \cite[Theorem 3.10]{MU}. Examples of 
graded isolated singularities are of particular interest, 
since when $A^G$ has a graded isolated singularity, the 
category of graded CM $A^G$-modules has several nice 
properties (see \cite{U2}).

Next we compute the pertinency for $G$-coactions.   

\begin{lemma}
\label{xxlem2.11} 
Retain the hypothesis of Lemma {\rm{\ref{xxlem2.10}}}. 
\begin{enumerate}
\item[(1)]
If $g_2=1$ and $g_1\neq 1$ 
then $\Pty(A, \Bbbk^G)=3$. 
As a consequence, $A^{co\; G}$ has a graded isolated singularity.
\item[(2)]
If $g_2\neq 1$, $g_1\neq 1$, $g_1\neq g_2$, and $g_1^2=g_2^2=1$, 
then $\Pty(A, \Bbbk^G)\geq 2$.
\item[(3)]
If $g_1\neq 1$, $g_1^2=1$ and $g_2=g_1$, then $\Pty(A, \Bbbk^G)=3$.
As a consequence, $A^{co\; G}$ has a graded isolated singularity.
\item[(4)]
If $g_1^2\neq 1$, $g_2=g_1$, then $\Pty(A, \Bbbk^G)=3$.
As a consequence, $A^{co\; G}$ has a graded isolated singularity. 
\item[(5)]
If $g_1^2\neq 1$, and $g_2=g_1^{-1}$, then $\Pty(A, \Bbbk^G)\geq 2$.
\item[(6)]
If $G=T$, then $\Pty(A,\Bbbk^G)\geq 2$.
\end{enumerate}
\end{lemma}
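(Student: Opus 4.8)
\textbf{Proof proposal for Lemma \ref{xxlem2.11}.}
The plan is to treat each of the six items by the same machinery used in Cases 1 and 2: reduce the pertinency estimate to a Gelfand-Kirillov dimension computation of $A/J$ (where $J$ is the ideal generated by $\bigcap_{g\in G} A A_g$) via Lemma \ref{xxlem1.8}(2), then exhibit explicit words $z$ in $x,y$ that lie in $J$ by checking that $z\in A A_g$ for every $g\in G$, and finally invoke a Diamond Lemma counting argument analogous to Lemma \ref{xxlem1.10} and Lemma \ref{xxlem2.6} to conclude $\GKdim A/J\le 1$. The relations in Lemma \ref{xxlem2.10}(1) show that, after a scalar normalization, $A$ carries normal-like elements built from $x^2,y^2$ (indeed $g_1^2=g_2^2$ forces $x^2$ and $y^2$ to have the same $G$-degree $g_1^2$, and one checks $x^2\pm y^2$ or similar combinations are normal, paralleling the element $z=x^2-y^2$ in the $\mathbb H$ case); this is what lets us slide factors past each other when verifying $z\in A A_g$ for all $g$.

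For the ``$\Pty=3$'' statements (1), (3), (4), the group $G$ is cyclic, generated by a single element $g_1$ (since $g_2\in\{1,g_1,g_1^{-1}\}$ in each of these), so $\deg_G$ takes only the values that are powers of $g_1$, and a short word such as $z=x^{|G|}$ or $z=(xy)^{|G|}$ already meets the covering condition of Lemma \ref{xxlem1.9}(2): the partial products $1,\deg_G f_1,\deg_G f_2f_1,\dots$ run through all of $\langle g_1\rangle=G$. Then $z\#1\in I$, and one shows $\GKdim((A\#H)/I)=0$ by checking that $A/J$ is finite-dimensional: having $x^m\in J$ (or $y^m\in J$) together with the down-up relations rewriting $ud^2,u^2d$ as lower terms kills all but finitely many monomials, so $\Pty=\GKdim A-0=3$; the isolated-singularity consequence is then immediate from \cite[Theorem 3.10]{MU} since Auslander's theorem holds here. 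For (2) and (5), where $G$ is non-cyclic (Klein four, or generated by $g_1$ with $g_1^2\neq 1$ and $g_2=g_1^{-1}$), I would mimic Lemma \ref{xxlem2.2}/\ref{xxlem2.7}: take $a=|G|$ and show $x^{2a}(yx)^{a}$ and the symmetric word $(xy)^a y^{2a}$ (or $z$-powers, with $z$ the appropriate normal element) lie in $J$ by writing each as a product with a prescribed middle factor $x^i(yx)^j$ realizing an arbitrary group element, using normality of $x^2$; then a Lemma \ref{xxlem1.10}(2)- or Lemma \ref{xxlem2.6}-type counting argument gives $\GKdim A/J\le 1$, hence $\Pty\ge 2$. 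Case (6), $G=T$ of order $8$, is the same non-cyclic argument with $g_1,g_2$ the generators coming from the two matrix types.

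The main obstacle I anticipate is the verification that suitable quadratic expressions in $x,y$ are genuinely normal (or normalizing) in $A=\mathbb D(\alpha,-1)$ under the relations \eqref{E2.8.1}--\eqref{E2.8.2}, since unlike $\mathbb D_\beta$ (where $d^2,u^2$ are manifestly normal) the generators $x,y$ do not a priori skew-commute with any obvious central-ish element; establishing that $x^2$ is normal, or identifying the correct normal element playing the role of $z$, requires a careful look at the specific relations for each subcase of $\widehat G$ in Lemma \ref{xxlem2.9}(3). A secondary technical point is making the Diamond Lemma basis count uniform across all the subcases — one must be sure that imposing $x^m\in W$ (or the two-variable relations of the non-cyclic cases) really does bound $\dim (A/W)_n$, which uses that the remaining ``lower terms'' cannot regenerate high powers; this is where the specific shape $d^{2s}(du)^i$, $(ud)^ju^{2t}$ in Lemma \ref{xxlem1.10}(2) was engineered, and I would need the analogue in the $x,y$ presentation. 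Once normality is in hand, the rest is bookkeeping.
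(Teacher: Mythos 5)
Your overall frame (bound $\Pty$ below via $\GKdim A/J\le 1$ using Lemma \ref{xxlem1.8}(2), produce witness words in $J$ via Lemma \ref{xxlem1.9}, then count with a Diamond Lemma argument) is the paper's frame, but the two ingredients you lean on do not exist, and you have flagged but not repaired them. First, the normality claim is false: in $\mathbb{D}(\alpha,-1)$ the degree-two normal elements are spanned by $du-r\,ud$ with $r^2-\alpha r+1=0$; since $x^2-y^2=\tfrac12(du+ud)$, it is normal only when $\alpha=-2$ (the ${\mathbb{H}}$ case, which is Case 2, not Case 3), and $x^2$, $x^2+y^2$ involve $d^2+u^2$ and are never normal for $\alpha\neq 0$. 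So there is no ``sliding'' available in the $x,y$ presentation, and no analogue of Lemma \ref{xxlem1.10}(2) for leading words like $x^{2a}(yx)^a$; this is exactly the step you defer, and it is where your argument for parts (2), (5) and (6) stops. The paper avoids it: for (2) it puts the short words $xyx,yxy$ into $J$, rewrites $y^3$ and $y^2x$ using the defining relations (here $\alpha\neq 2$ is used), and exhibits an explicit linear-growth spanning set; for (5) it uses $x^3,y^3\in J$ similarly. Second, for the $\Pty=3$ cases (1), (3), (4), a single word such as $x^{|G|}\in J$ together with the $d,u$ straightening rules does not make $A/J$ finite dimensional — that inference is a non sequitur (e.g.\ powers of $y$ are untouched when $g_2=1$). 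The actual arguments are different and need more of $J$: in (3) and (4) every monomial of length $4$ lies in $J$ because the suffix degrees already exhaust $G$, which immediately gives $\GKdim A/J=0$; in (1) one needs $x^2, xyx, xy^2x\in J$ and then the relation \eqref{E2.8.1} (with $\alpha\neq2$) to force $y^3\in J$.

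The largest gap is part (6), which you treat as ``the same non-cyclic argument.'' It is not: for $G=T$ (order $8$) the paper has to produce explicit degree-$7$ monomials in $J$ via Lemma \ref{xxlem1.9}(2), convert back to the $d,u$ coordinates where the PBW basis and Lemma \ref{xxlem1.10}(1) live, and solve a small linear system whose coefficients depend on the scalar $a$ in the relation $y^3=ax^2y+byx^2$; the system degenerates for $a=\pm1$ and for $a^2=2$, and the latter requires a separate family of degree-$9$ monomials in $J$ and a computer-assisted (Sage) computation before Lemma \ref{xxlem1.10}(1) applies. Your sketch gives no mechanism for this case, and since none of the shortcuts you propose (normal quadratic elements, a Lemma \ref{xxlem1.10}(2)-type count in $x,y$) are available, part (6) — and with it the lemma — is not proved by your outline. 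If you want to salvage the plan, replace the normality step by direct suffix-degree checks to load $J$ with low-degree words, do the counting either by hand in $x,y$ (cases (2),(5)) or after converting to $d,u$ (case (6)), and be prepared for the parameter case-split in (6).
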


\begin{proof} Let $J$ be the ideal generated by $\bigcap_{g\in G} AA_g$
as defined in \eqref{E2.1.3}.

(1) In this case $\deg_G x=g_1\neq 1$ and $\deg_G y=1$. Then 
$$\deg_{G} x^2= \deg_{G} xyx=\deg_{G} xy^2x=1.$$ 
It is easy to see that $x^2, xyx, xy^2 x\in J$. By the first 
relation of $A$, $y^3\in J$. Thus $A/J$ is finite dimensional, 
or $\GKdim A/J=0$. This means that $\Pty(A, \Bbbk^G)=3$, and 
by \cite[Corollary 3.8]{BHZ1}, $A^{co\; G}$ has a graded isolated 
singularity.

(2) It is easy to check that $xyx,yxy\in AA_{g}$ for all $g\in G$. 
So $xyx, yxy\in J$. Using relations of $A$, we have, in $A/J$,
$$\begin{aligned}
y^3&= a x^2 y+ b yx^2,\\
y^2 x&=xy^2+c x^3
\end{aligned}
$$
for some $a,b,c\in \Bbbk$. By using Bergman's Diamond Lemma 
\cite{Be} with degree lexicographic monomial order with $y >x$, 
$A/J$ has a monomial basis and each of the monomials
does not contain subwords $y^3$, $y^2x$, $xyx$, $yxy$. This 
implies that $A/J$ is spanned by
$$\{x^i: i\geq 0\}
\cup \{ y x^i, x^i y, x^i y^2: i\geq 0\} 
\cup \{ yx^i y, yx^i y^2: i\geq 0\}.$$
Thus $\GKdim A/J\leq 1$, and hence $\Pty(A,\Bbbk^G)\geq 2$.

(3,4) In these cases, every monomial of degree 4 is in $J$. So
$\GKdim A/J=0$ as required.

(5) In this case, one can show that $x^3\in J$ as $\deg_G x$ 
generates the group $G$. Similarly, we have $y^3\in J$.
Using the relations in $A$, one sees that, in $A/J$,
$$\begin{aligned}
x^3&=0,\\
y^3&=0,\\
yx^2&=-x^2y+a xyx,\\
y^2x&=-xy^2+a yxy
\end{aligned}
$$
for some $a\in \Bbbk$. By Bergman's Diamond Lemma \cite{Be},
$A/J$ is spanned by 
$$\{x^i (yx)^j y^k: 0\leq i,k\leq 2, j\geq 0\}.$$
Therefore $\GKdim A/J\leq 1$ and $\Pty(A,\Bbbk^G)\geq 2$.

(6) Let $\{d_7,\cdots,d_1\}$ be an ordered set of elements (possibly 
with repetitions) in $G$ such that the set 
$\{ \prod_{s=1}^7 d_s, \prod_{s=1}^6 d_s, \cdots, d_2 d_1, d_1\}$ 
is equal to $G\setminus \{1\}$. Suppose $f_s\in A$ are homogeneous
of degree $d_s$ for all $s=1,\cdots,7$. By Lemma \ref{xxlem1.9}(2), 
the product $f_7 f_6\cdots f_1$ is in $J$. Using this observation 
one sees that the following elements are in $J$:
$$y^2 x y^3 x, \; xyx^3yx, \; yx^3yx^2, \; x^3yx^3, \; 
x^2yx^3y, \; yxy^3xy, \; xy^3xy^2, \; y^3x y^3.$$
(The reason for verifying a product of 7 letters is that any 
subword of these monomials does not have 
$G$-degree 1. This list is  all degree 7 monomials in $J$.) 
Using the fact that 
$x=\frac{1}{2}(d+u)$ and $y=\frac{1}{2}(d-u)$, we obtain the 
following relation in $A/J$:
\begin{align}
\notag 
0&=2^7(x^3yx^3 - y^3xy^3)\\
\notag
&=(-2)u^7 + {\rm{lower}}\;\; {\rm{terms}}.
\end{align}
or equivalently,
\begin{equation}
\notag
u^7= {\rm{lower}}\;\; {\rm{terms}}
\end{equation}
In other words, we can write $u^7$ in terms of terms in lower degree
in the lexicographic order. 
Similarly, by using 
$x=\frac{1}{2}(d+u)$ and $y=\frac{1}{2}(d-u)$, we calculate
the following in $A/J$:
$$\begin{aligned}
0&=2^7(x^3yx^3 + y^3xy^3)\\
&= (-2a^5 -2a^4 +6a^3 +8a^2 -4a -6)udu^5 \\
&\quad + (-2a^6 +2a^5 +6a^4 -6a^3 -6a^2 +4a +2)udududu\\
&\quad + {\rm{lower}}\;\; {\rm{terms}};\\
0&=2^7(y^2xy^3x + x^2yx^3y)\\
&=(-2a^5 + 2a^4 + 10a^3 - 4a^2 - 12a + 2)udu^5 \\
&\quad + (-2a^6 - 2a^5 + 6a^4 + 6a^3 - 6a^2 -4a + 2)udududu\\
&\quad + {\rm{lower}}\;\; {\rm{terms}};\\
0&=2^7(xyx^3yx + yxy^3xy)\\
&= (2a^5 - 2a^4 - 6a^3 + 8a^2 + 4a - 6)udu^5\\
&\quad + (-2a^6 - 2a^5 + 6a^4 + 6a^3 - 6a^2 - 4a +2)udududu\\
&\quad + {\rm{lower}}\;\; {\rm{terms}};\\
0&=2^7(yx^3yx^2 + xy^3xy^2)\\
&= (2a^5 +2a^4 -10a^3 -4a^2 +12a+2)udu^5\\
&\quad +(-2a^6 +2a^5 +6a^4 -6a^3 -6a^2 +4a +2)udududu\\
&\quad + {\rm{lower}}\;\; {\rm{terms}};
\end{aligned}
$$
where ``lower term'' means a linear combination of 
monomials of degree 7 that have lower degrees than 
terms appearing the expression (in this case, 
$udududu$) with respect to lexicographic order.
Recall that $a$ is the scalar that appeared in one of the relations
of $A$, 
$$y^3=ax^2y+byx^2,$$
see the proof of part (2). 
If $a^2\neq 1$ and $a^2\neq 2$, then by a linear 
algebra computation, both $udu^5$ and $udududu$ 
can be expressed as  ``lower terms'':
\begin{align}
\notag
udu^5&= {\rm{lower}}\;\; {\rm{terms}},\\
\notag
udududu&={\rm{lower}}\;\; {\rm{terms}}.
\end{align}
Since $u^7$ and $udududu$ (and then $(ud)^4$) are equal 
to lower terms in $A/J$, Lemma \ref{xxlem1.10}(1) implies that 
$\GKdim A/J\leq 1$, as required.

If $a=1$, then we have
$$\begin{aligned}
0&=2^7(x^3yx^3 + y^3xy^3) \\
&= -6du^6 + 8d^3u^4 - 8d^4udu + 8d^5u^2 + 2d^7;\\
0&=2^7(y^2xy^3x + x^2yx^3y)\\
& = -4udu^5 + 2du^6 - 4d^2udu^3 + 4d^3u^4 - 4d^3udud 
     + 4d^4udu - 8d^5u^2 + 2d^7;\\
0&=2^7(xyx^3yx + yxy^3xy) \\
& = -2du^6 - 4d^3u^4 + 2d^7;\\
0&=2^7(yx^3yx^2 + xy^3xy^2)\\
& = 4udu^5 - 2du^6 + 4d^2udu^3 - 4d^3udud + 4d^4udu - 8d^5u^2 + 2d^7.
\end{aligned}
$$
By a linear algebra computation, we have
\begin{equation}
\notag
d^3 (ud)^2= {\rm{lower}}\;\; {\rm{terms}}.
\end{equation}
Since $u^7$ and $d^3 (ud)^2$ are equal to lower terms in 
$A/J$, Lemma \ref{xxlem1.10}(1) implies that 
$\GKdim A/J\leq 1$, as required.

If $a=-1$, then we have 
$$\begin{aligned}
0&= 2^7(x^3yx^3 + y^3xy^3) \\
&= 2du^6 - 4d^3u^4 + 2d^7;\\
0&= 2^7(y^2xy^3x + x^2yx^3y)\\
& = 4udu^5 + 2du^6 - 4d^2udu^3 - 4d^3udud + 4d^4udu + 8d^5u^2 + 2d^7;\\
0&= 2^7(xyx^3yx + yxy^3xy)\\
& = 6du^6 + 8d^3u^4 - 8d^4udu - 8d^5u^2 + 2d^7;\\
0&= 2^7(yx^3yx^2 + xy^3xy^2)\\
& = -4udu^5 - 2du^6 + 4d^2udu^3 + 4d^3u^4 - 4d^3udud + 4d^4udu + 8d^5u^2 + 2d^7.
\end{aligned}
$$
By a linear algebra computation, we have
\begin{equation}
\notag
d^3 (ud)^2= {\rm{lower}}\;\; {\rm{terms}}.
\end{equation}
Since $u^7$ and $d^3 (ud)^2$ are equal to lower terms in 
$A/J$, Lemma \ref{xxlem1.10}(1) implies that 
$\GKdim A/J\leq 1$, as required.

If $a^2=2$, we need to use a different set of elements in $J$.
By a similar argument as before and by Lemma \ref{xxlem1.9}(2),
the following elements are in $J$:

$$x^3 y^3 x^3, y^3x^3y^3,             
y^2 x y^3 x^3, x^2 y x^3 y^3, 
y^3 x^3 y x^2, x^3 y^3 x y^2,       
y x y^3 x y x^2, x y x^3 y x y^2,$$

$$y^2 x y^3 x y^2, x^2 y x^3 y x^2,   
y x y^3 x y^3, x y x^3 y x^3,       
y x^4 y x^3, x y^4 x y^3,           
x y^4 x y x^2, y x^4 y x y^2.$$      

By a Sage computation, we have, in $A/J$, 

$$
\begin{aligned}
0&= 2^9(x^3 y^3 x^3 - y^3x^3y^3)\\
&= (-2)u^9 + (8+4a)ududu^5 + (18+12a)ududududu+ {\rm{lower}}\;\; {\rm{terms}}\\
0&= 2^9(y^2 x y^3 x^3 - x^2 y x^3 y^3)\\
&=(-2)u^9 + (-4-4a)ududu^5 + (-2-4a)ududududu + {\rm{lower}}\;\; {\rm{terms}}\\
0&= 2^9(y^3 x^3 y x^2 - x^3 y^3 x y^2)\\
&= 2u^9 + (-4)ududu^5 + 2ududududu + {\rm{lower}}\;\; {\rm{terms}}\\
0&= 2^9(y^2 x y^3 x y^2 - x^2 y x^3 y x^2)\\
&= (-2)u^9 + 2ududududu + {\rm{lower}}\;\; {\rm{terms}}\\
\end{aligned}
$$

An easy linear algebra computation shows that
$$
\begin{aligned}
u^9 &= {\rm{lower}}\;\; {\rm{terms}}\\
(ud)^2u^5 &= {\rm{lower}}\;\; {\rm{terms}}\\
(ud)^4u &= {\rm{lower}}\;\; {\rm{terms}}\\
\end{aligned}
$$
Since $u^9$ and $(ud)^4u$ (and then $(ud)^5$) are equal 
to lower terms in $A/J$, Lemma \ref{xxlem1.10}(1) implies that 
$\GKdim A/J\leq 1$, as required. This finishes the proof.
\end{proof}

\subsection{Case 4: $A={\mathbb{F}}$, $x$ and $y$ are $G$-homogeneous}
\label{xxsec2.4} The final case is when $A={\mathbb{F}}$ and the proof is also quite 
tricky. We start with a result of \cite{CKZ}.

\begin{lemma}\cite[Lemma 1.6]{CKZ}
\label{xxlem2.12} Let ${\mathbb{F}}$ be generated by $x$ and $y$ and subject to two 
relations \eqref{E2.0.1}.
\begin{enumerate}
\item[(1)]
Define an order on monomials by extending $x<y$ lexicographically. 
Then we have a complete set of five relations that is the reduction system
in the sense of \cite[p.180]{Be}.
$$\begin{aligned}
y^3& =xyx,\\
yxy&=x^3,\\
y^2x^3&=xyx^2y,\\
yx^2yx&=x^3 y^2,\\
yx^4&=x^4y.
\end{aligned}
$$
\item[(2)] We also have the other relations:
$$\begin{aligned}
y^4& =x^4,\\
yxyx&=x^4,\\
xyxy&=y^4.
\end{aligned}
$$
\item[(3)]
There is a $\Bbbk$-linear basis consisting of the monomials
of the form
$$x^i (yx^3)^j (yx^2)^{\epsilon} (y^2x^2)^k y^a x^b$$
where $i,j,k\geq 0$, $\epsilon$ is either $0$ or $1$,
and
$$(a,b)=(0,0), (1,0), (1,1), (1,2), (1,3), (2,0), (2,1), (2,2)$$
if $j+\epsilon+k=0$,
$$(a,b)=(1,0), (1,1), (1,2), (1,3), (2,0), (2,1), (2,2)$$
if $j>0$ and $\epsilon+k=0$, and
$$(a,b)=(1,0), (2,0), (2,1), (2,2)$$
if $\epsilon+k>0$.
\end{enumerate}
\end{lemma}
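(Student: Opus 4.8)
The plan is to apply Bergman's Diamond Lemma \cite{Be} to the presentation $\mathbb{F}=\Bbbk\langle x,y\rangle/(x^3-yxy,\; y^3-xyx)$ with the degree-lexicographic monomial order determined by $x<y$. All relations are homogeneous, so reductions preserve degree and the induced order on each graded piece is a finite total order; hence reduction sequences terminate and the Diamond Lemma is applicable. The two defining relations have leading words $y^3$ and $yxy$, giving the initial rewriting rules $y^3\to xyx$ and $yxy\to x^3$.

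First I would run the completion procedure. There are no inclusion ambiguities for $\{y^3,yxy\}$, and the overlap ambiguities come from the degree-$5$ words $y^4$, $yyyxy$, $yxyyy$ and $yxyxy$. The ambiguity from $y^4$ resolves (both reductions reach $x^4$), but $yyyxy$ reduces to the distinct irreducibles $xyx^2y$ and $y^2x^3$, $yxyyy$ to $x^3y^2$ and $yx^2yx$, and $yxyxy$ to $x^4y$ and $yx^4$. The Diamond Lemma then forces three new rules, and the chosen order selects $y^2x^3\to xyx^2y$, $yx^2yx\to x^3y^2$ and $yx^4\to x^4y$; each of these is automatically a consequence of the two defining relations, being a pair of reductions of a common word. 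I would then recompute all overlap ambiguities among the five leading words $y^3$, $yxy$, $y^2x^3$, $yx^2yx$, $yx^4$ and verify that each resolves, so the five-rule system is confluent; this yields part (1). As a safeguard against a missed rule, I would run the parallel check that the irreducible monomials span $\mathbb{F}$ while, by $\mathbb{F}\cong\mathbb{D}_{-1}$ and the resolution of $\Bbbk$ in Lemma~\ref{xxlem1.5}, $\mathbb{F}$ is Artin--Schelter regular with Hilbert series $1/((1-t)^3(1+t))$; matching the count of irreducible monomials degree by degree against this series forces linear independence, hence a basis, hence unique normal forms and confluence.

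Part (2) is then a direct computation in the completed system: $y^4=y^3\cdot y\to xyx\cdot y=xyxy\to x\cdot x^3=x^4$, so $y^4=x^4$; $yxyx\to x^3\cdot x=x^4$; and $xyxy\to x\cdot x^3=x^4=y^4$, so $xyxy=y^4$.

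For part (3) I would produce the explicit normal forms. A monomial is irreducible precisely when it contains none of $y^3$, $yxy$, $y^2x^3$, $yx^2yx$, $yx^4$ as a subword; a short case analysis shows that every $y$-run has length $1$ or $2$, that a non-final single $y$ must be followed by exactly $x^3$ before the next $y$ and a non-final $y^2$ by exactly $x^2$, and that no $y^2x^2$ block may precede a $yx^3$ or a $yx^2$, which forces the shape $x^i(yx^3)^j(yx^2)^\epsilon(y^2x^2)^ky^ax^b$. Tracking which forbidden subword bounds the tail in the three regimes $j+\epsilon+k=0$, $j>0=\epsilon+k$ and $\epsilon+k>0$ gives exactly the admissible pairs $(a,b)$ listed, and one checks the factorization is exhaustive and unique. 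I expect this last step to be the genuine obstacle: keeping the interface between the $(y^2x^2)^k$ block, the single $(yx^2)^\epsilon$, and the allowed tails correct, and ensuring no irreducible word is omitted or double-counted. The completion in part (1) is conceptually routine but voluminous, which is precisely why the Hilbert-series cross-check is worth doing alongside it.
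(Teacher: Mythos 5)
The paper offers no proof of this lemma to compare against: it is imported verbatim from \cite[Lemma 1.6]{CKZ}. Your strategy is the expected one and, for parts (1) and (2), correct: under deg-lex with $x<y$ the defining relations give $y^3\to xyx$ and $yxy\to x^3$, the overlaps $y^3xy$, $yxy^3$, $yxyxy$ force exactly the three new rules $y^2x^3\to xyx^2y$, $yx^2yx\to x^3y^2$, $yx^4\to x^4y$, and confluence of the five-rule system can be certified either by checking all overlaps or, as you propose, by matching the count of irreducible monomials against the Hilbert series $1/((1-t)^2(1-t^2))=1/((1-t)^3(1+t))$ of $\mathbb{D}_{-1}\cong{\mathbb{F}}$; since irreducible monomials always span, equality of counts forces them to be a basis, which by Bergman's equivalences gives resolvability of all ambiguities. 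Two small inaccuracies do not affect this: $y^4$ is a degree-$4$ (not degree-$5$) overlap, and the self-overlaps of $y^3$ also include $y^5$, which resolves to the same rule $yx^4\to x^4y$. Your derivations of $y^4=x^4$, $yxyx=x^4$, $xyxy=y^4$ are fine.

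The genuine problem is in your structural description of the irreducible words for part (3). The claim that ``a non-final single $y$ must be followed by exactly $x^3$ before the next $y$'' is false: the words $yx^2y$ and $yx^2y^2x^2$ contain none of the five leading words, yet their first (single, non-final) $y$ is followed by only $x^2$; these are exactly the normal forms with $\epsilon=1$ in the lemma, so your intermediate claim contradicts the shape you are trying to establish and, if used, would omit legitimate basis elements (already in degree $4$ you would count $8$ monomials instead of $\dim {\mathbb{F}}_4=9$). The correct constraints extracted from forbidding $yxy$, $yx^4$, $y^2x^3$, $yx^2yx$ are: every internal $x$-run has length $2$ or $3$; an $x$-run following a $y^2$-run has length at most $2$ (hence exactly $2$ if internal); and after an $x$-run of length exactly $2$ that is preceded by a $y$, the next $y$-run must have length $2$ unless it is a terminal lone $y$. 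This is what produces the single optional $(yx^2)$ block, forbids a $(y^2x^2)$ block from preceding $(yx^3)$ or $(yx^2)$, and dictates the admissible tails $(a,b)$ (the exclusions such as $(a,b)=(0,0)$ when $j+\epsilon+k>0$ being there only to make the factorization unique). With that correction --- which, to your credit, your Hilbert-series cross-check would have flagged, and which sits precisely at the interface you identified as the delicate step --- the rest of your plan goes through.
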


Let $G$ be a finite group coacting on $A:={\mathbb{F}}$ such that $x$ and $y$ 
are $G$-homogeneous. Let $J$ be defined as in \eqref{E2.1.3}.

\begin{lemma}
\label{xxlem2.13}
Suppose there are $\alpha,\beta,\alpha',\beta'\geq 0$ such that
$$(yx^3)^{\alpha} (y^2x^2)^{\beta} \text{ and } 
(yx^3)^{\alpha'} (yx^2) (y^2x^2)^{\beta'}\in J,$$
then $\GKdim A/J\leq 1$. 
\end{lemma}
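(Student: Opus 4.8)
The plan is to mimic the structure used in Lemma~\ref{xxlem1.10}(2) and Lemma~\ref{xxlem2.6}: exploit normal (or skew-commuting) elements in ${\mathbb{F}}$ to ``push'' the two given elements of $J$ into a single high-degree monomial lying in $J$, and then invoke the monomial basis of Lemma~\ref{xxlem2.12}(3) together with the Gelfand--Kirillov dimension estimate \cite[(E1.1.6)]{BHZ2}. First I would normalize: by raising both given elements to a common power I may assume $\alpha=\beta=\alpha'=\beta'=:a$ (and possibly enlarge $a$), so that $f_1:=(yx^3)^a(y^2x^2)^a$ and $f_2:=(yx^3)^a(yx^2)(y^2x^2)^a$ both lie in $J$. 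The key structural input is that $x^4$ is \emph{central} (or at least normal) in ${\mathbb{F}}$ — this follows from the relation $yx^4=x^4y$ in Lemma~\ref{xxlem2.12}(1), and similarly $y^4=x^4$ is normal — so large powers of $x$ can be commuted freely past any word.

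Next I would use the extra relations of Lemma~\ref{xxlem2.12}(2) (especially $y^4=x^4$, $yxyx=x^4$, $xyxy=y^4$) to rewrite products $f_1 g$ or $g f_2$ for suitable auxiliary words $g$, with the goal of showing that every monomial of the form $x^i(yx^3)^j(yx^2)^{\epsilon}(y^2x^2)^k y^a x^b$ in which the relevant exponents are sufficiently large (bounded by some explicit multiple of $a$) already lies in $J$. Concretely, because $f_1,f_2\in J$ and $J$ is a two-sided ideal, multiplying on the left or right and using centrality of $x^4$ (and the normality of $y^4$) should show that any basis monomial with $j$ or $k$ (or the combination governing its degree) exceeding a fixed bound is zero in $A/J$. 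Since the basis in Lemma~\ref{xxlem2.12}(3) is parametrized essentially by $i,j,k$ (with $\epsilon\in\{0,1\}$ and $(a,b)$ from a finite list), bounding $j$ and $k$ — while $i$ can grow but contributes only a linear amount to the dimension in each degree — forces $\dim_{\Bbbk}(A/J)_n$ to be uniformly bounded, hence $\GKdim A/J\le 1$ by \cite[(E1.1.6)]{BHZ2}.

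I expect the main obstacle to be the bookkeeping in the reduction step: unlike $\mathbb{D}_\beta$, where $d^2$ and $u^2$ are genuinely normal and the ``two indices large'' trick from Lemma~\ref{xxlem1.10}(2) applies cleanly, in ${\mathbb{F}}$ the normal element is only $x^4$ (degree $4$), so I must be more careful about which monomials the products $f_1,f_2$ actually annihilate, and I will likely need to combine $f_1$ and $f_2$ multiplicatively (as in the $d^{4a}u^{4a}$ computation) to reach a monomial involving a high power of $x$ alone, then use centrality of $x^4$ to spread this across the basis. Verifying that the resulting constraints genuinely bound \emph{both} the $j$- and $k$-exponents (and not just their sum in a way that still allows an unbounded family) is the delicate point; I would handle it by an explicit, if tedious, Diamond-Lemma-style argument paralleling the proof of Lemma~\ref{xxlem2.6}, checking that once one of $\alpha$-type and $\beta$-type exponents is large the monomial reduces, so that the surviving monomials in each $\mathbb{N}$-degree are governed by at most one free parameter.
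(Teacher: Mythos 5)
Your overall skeleton matches the paper's proof: use the centrality of $x^4=y^4$ (equivalently the relations $yxyx=x^4$, $V_0V_2=W_0W_2=x^8$ in the notation of Lemma \ref{xxlem2.14}) to multiply a hypothesis element by a suitable auxiliary word and place a pure power $x^{4w}$ in $J$, then count the surviving basis monomials of Lemma \ref{xxlem2.12}(3) and conclude via \cite[(E1.1.6)]{BHZ2}. But your bookkeeping claim is wrong in a way that matters. You assert that the hypotheses force every basis monomial $x^i(yx^3)^j(yx^2)^{\epsilon}(y^2x^2)^k y^ax^b$ with $j$ \emph{or} $k$ large into $J$, so that both $j$ and $k$ are bounded while $i$ runs free. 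In fact the two given elements of $J$ only kill monomials in which $j$ and $k$ are \emph{simultaneously} large: the element $(yx^3)^{\alpha}(y^2x^2)^{\beta}$ occurs as a contiguous subword exactly when $\epsilon=0$, $j\geq\alpha$ and $k\geq\beta$, and the element $(yx^3)^{\alpha'}(yx^2)(y^2x^2)^{\beta'}$ handles the case $\epsilon=1$ in the same way. Nothing in the hypotheses puts, say, $x^i(yx^3)^j$ (with $k=0$) into $J$ for large $j$: such a word contains neither generator, nor even an $x^4$ subword, so your intended separate bound on $j$ is unobtainable. (Also note that simply multiplying $f_1$ by $f_2$ does not produce a pure $x$-power; as you say elsewhere, one must multiply a generator by auxiliary words built from $x^2yx$ and $x^2y^2$.)

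The correct count, which is what the paper does, is the complementary one: $x^{4w}\in J$ bounds $i$ (any monomial with $i\geq 4w$ contains $x^{4w}$ as a subword), while the two hypothesis elements forbid $j$ and $k$ from both being large. Hence for a nonzero basis monomial at least two of $\{i,j,k\}$ are uniformly bounded; since $\epsilon$ and the tail $y^ax^b$ range over finite sets, each graded piece of $A/J$ has uniformly bounded dimension and $\GKdim A/J\leq 1$. Your closing remark that the surviving monomials should be governed by at most one free parameter is the right target, but the free parameter is one of $j,k$ rather than $i$, and the mechanism you propose for eliminating the other two parameters is precisely the step that needs this repair.
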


\begin{proof} We again use the Diamond Lemma \cite{Be}. 
By the fact that $x^4=y^4$ is central in $A$ 
[Lemma \ref{xxlem2.12}(2)], there is a monomial $f$ in $x$ and $y$ such 
that $f(yx^3)^i (y^2x^2)^k=x^{4w}$ for some $w$. So we have the 
following equations in $A/J$:
$$\begin{aligned}
x^{4w}&=0,\\
(yx^3)^{\alpha} (y^2x^2)^{\beta}&=0,\\
(yx^3)^{\alpha'} (yx^2) (y^2x^2)^{\beta'}&=0.
\end{aligned}
$$
Therefore, if there are $i, j, k$ with 
$x^i (yx^3)^j (yx^2)^{\epsilon} (y^2x^2)^k y^a x^b\neq 0$
in $A/J$ for $(a,b)$ as in Lemma \ref{xxlem2.12}(b), 
then there is a uniform bound on at least two of $\{i,j,k\}$. Then
$\GKdim A/J\leq 1$ by  \cite[(E1.1.6)]{BHZ2} and Lemma \ref{xxlem2.12}(3).
\end{proof}

The goal of the rest of this subsection is to find some monomials 
$(yx^3)^i (y^2x^2)^k$ and $(yx^3)^{i'} (yx^2) (y^2x^2)^{k'}$ in 
$J$. We introduce the following notation. Let
$$\begin{aligned}
X_0& =x,\\
X_1& =y.
\end{aligned}
$$
We will use $X_i$ for $i\in {\mathbb Z}/(2)$. Let
$$\begin{aligned}
V_0&= yx^3,\\
V_1&= xyx^2,\\
V_2&=x^2 yx,\\
V_3&= x^3 y,
\end{aligned}
$$
and 
$$\begin{aligned}
W_0&= y^2 x^2,\\
W_1&= xy^2 x,\\
W_2&= x^2 y^2,\\
W_3&= yx^2 y.
\end{aligned}
$$
We will use $V_i$ and $W_i$ for $i\in {\mathbb Z}/(4)$. The following lemma follows
from a direct computation and the relations given in Lemma \ref{xxlem2.12}.

\begin{lemma}
\label{xxlem2.14} Retain the above notation.
\begin{enumerate}
\item[(1)]
$X_i V_j= V_{j+1} X_i$ and $X_i W_j= W_{j+1} X_i$ 
for all $i\in {\mathbb Z}/(2)$ and $j\in {\mathbb Z}/(4)$.
\item[(2)]
Elements $\{V_0,\cdots, V_3, W_0,\cdots, W_3\}$ are pairwise commutative.
\item[(3)]
The following relations hold
\begin{enumerate}
\item[(a)]
$V_0 V_2=x^8=V_1 V_3$.
\item[(b)]
$W_0 W_2=x^8= W_1 W_3$.
\item[(c)]
$V_0 V_1=x^4 W_0$.
\end{enumerate}
\end{enumerate}
\end{lemma}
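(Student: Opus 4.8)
\emph{Proof proposal.} The plan is to verify every identity by a direct computation with the reduction system of Lemma~\ref{xxlem2.12}, arranging the argument so that part~(2) follows formally from part~(1), and part~(3) reduces to a handful of length-eight products that collapse once one uses that $x^4=y^4$ is central in ${\mathbb{F}}$ (the relation $yx^4=x^4y$ of Lemma~\ref{xxlem2.12}(1) says $x^4$ commutes with $y$, and $y^4=x^4$ comes from Lemma~\ref{xxlem2.12}(2)).

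\emph{Part~(1).} I would check the sixteen identities $X_iV_j=V_{j+1}X_i$ and $X_iW_j=W_{j+1}X_i$, $i\in{\mathbb Z}/(2)$, $j\in{\mathbb Z}/(4)$, one at a time. Most are pure rebracketing of a monomial, e.g. $xV_0=x(yx^3)=(xyx^2)x=V_1x$, and likewise $xV_1=V_2x$, $xV_2=V_3x$, $xW_0=W_1x$, $xW_1=W_2x$, $yV_3=V_0y$, $yW_2=W_3y$, $yW_3=W_0y$. The remaining ones each invoke exactly one relation of Lemma~\ref{xxlem2.12}: $xV_3=x^4y=yx^4=V_0x$, $yV_0=y^2x^3=xyx^2y=V_1y$, and $yV_2=yx^2yx=x^3y^2=V_3y$ use $yx^4=x^4y$, $y^2x^3=xyx^2y$, $yx^2yx=x^3y^2$ respectively; $yV_1=(yxyx)x=x^5=x^2(yxy)=(x^2yx)y=V_2y$ uses $yxyx=x^4$ and $yxy=x^3$; and similarly $xW_2=x^3y^2=yx^2yx=W_3x$, $xW_3=xyx^2y=y^2x^3=W_0x$, $yW_0=y^3x^2=xyx^3=xy^2xy=W_1y$ (using $y^3=xyx$ and $x^3=yxy$), $yW_1=(yxy)yx=x^3yx=x^2y^3=W_2y$ (using $yxy=x^3$ and $y^3=xyx$). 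There is nothing difficult here beyond matching monomials against the five reductions of Lemma~\ref{xxlem2.12}(1).

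\emph{Part~(2).} This I would deduce with no further computation. Rewriting part~(1) as $V_jX_i=X_iV_{j-1}$ and $W_jX_i=X_iW_{j-1}$ and iterating four times shows that for every length-four monomial $m$ in $x,y$ one has $V_jm=mV_{j-4}=mV_j$ and $W_jm=mW_j$, since the index shifts by $4\equiv 0$ in ${\mathbb Z}/(4)$. Taking $m$ to be $V_k$ or $W_k$ gives $V_jV_k=V_kV_j$, $V_jW_k=W_kV_j$ and $W_jW_k=W_kW_j$, which is exactly part~(2).

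\emph{Part~(3).} For~(c): $V_0V_1=yx^3\cdot xyx^2=(yx^4)(yx^2)=x^4y\cdot yx^2=x^4y^2x^2=x^4W_0$, using $yx^4=x^4y$. For~(a): $V_0V_2=yx^3\cdot x^2yx=(yx^4)(xyx)=x^4y\cdot y^3=x^4y^4=x^8$ using $yx^4=x^4y$, $xyx=y^3$, $y^4=x^4$; and $V_1V_3=xyx^2\cdot x^3y=x(yx^4)(xy)=x\cdot x^4y\cdot xy=x^5(yxy)=x^5x^3=x^8$. For~(b): $W_0W_2=y^2x^2\cdot x^2y^2=y^2x^4y^2=x^4y^4=x^8$ by centrality of $x^4$ and $y^4=x^4$; and $W_1W_3=xy^2x\cdot yx^2y=(xy)(yxy)(x^2y)=xy\cdot x^3\cdot x^2y=xyx^5y=x^8$ as in the previous line. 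The closest thing to an obstacle here is purely bookkeeping: one must consistently orient the rewrites toward $x$-heavy forms so that a factor of $x^4$ surfaces and can be moved freely, and then every identity collapses in one or two steps, so no genuine difficulty arises in this lemma.
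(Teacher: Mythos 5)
Your proposal is correct and follows the same route the paper intends: the paper simply asserts that Lemma \ref{xxlem2.14} ``follows from a direct computation and the relations given in Lemma \ref{xxlem2.12},'' and your case-by-case verification of part (1) and the one-line collapses in part (3) are exactly that computation, with all sixteen commutation identities and the $x^8$ identities checking out against the reduction system. Your derivation of part (2) formally from part (1) (sliding $V_j$ or $W_j$ past a length-four monomial shifts the index by $4\equiv 0$) is a tidy economy over recomputing each commutator, but it is still the same direct-computation argument the paper has in mind.
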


Now let $G$ be a finite group coacting on ${\mathbb{F}}$ such that $x$ and $y$ are 
$G$-homogeneous. We also assume that the $G$-coaction has trivial homological
codeterminant, namely, $\deg_G (x^4)=1$. Let

$$\begin{aligned}
x_i& =\deg_G X_i,\\
v_j& =\deg_G V_j,\\
w_k &=\deg_G W_k
\end{aligned}
$$
for $i\in {\mathbb Z}/(2)$ and $j,k\in {\mathbb Z}/(4)$. Let $N$ be the 
subgroup generated by $\{v_i\}_{i=0}^{3}\cup \{w_k\}_{k=0}^3$. By the  
lemma above, we have

\begin{lemma}
\label{xxlem2.15} Retain the above notation.
\begin{enumerate}
\item[(1)]
$G$ is generated by $x_0(=g_1)$ and $x_1(=g_2)$.
\item[(2)]
$N$ is an abelian subgroup of $G$.
\item[(3)]
$N$ is a normal subgroup of $G$.
\item[(4)]
$N$ is generated by $v_0$ and $v_1$ and 
$G/N$ is generated by the image $\overline{x_0}$ of $x_0$; 
and $\overline{x_0}=\overline{x_1}$ in $G/N$.
\item[(5)]
$N$ is also generated by $\{v_i, w_j\}$ for any pair $(i,j)$.
\item[(6)]
$G= N \cup N x_{i_1} 
\cup N x_{j_1}x_{j_2} \cup N x_{k_1}x_{k_2}x_{k_3}$ for any fixed $i_s, j_s,k_s
\in {\mathbb Z}/(2)$.
\item[(7)]
Let $n$ be the order of $v_0$. For any fixed $i_s\in {\mathbb Z}/(2)$ and 
$j_s, k_s\in {\mathbb Z}/(4)$ for $s=1,2,3,4$,
any element in $G$ is a right subword of 
$$v_{j_4}^n w_{k_4}^n x_{i_3} v_{j_3}^n w_{k_3}^nx_{i_2} 
v_{j_2}^n w_{k_2}^nx_{i_1} v_{j_1}^n w_{k_1}^n.$$
\end{enumerate}
\end{lemma}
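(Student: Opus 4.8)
The plan is to prove the seven assertions in the order listed, feeding each into the next. Part (1) is essentially the definition of inner-faithfulness: $\mathbb{F}$ is generated by the $G$-homogeneous elements $x,y$ of $G$-degrees $x_0=g_1$ and $x_1=g_2$, so the grading is supported on $\langle x_0,x_1\rangle$, and inner-faithfulness of the coaction forces $G=\langle x_0,x_1\rangle$, exactly as recorded earlier for $\mathbb{D}_\beta$. For (2) and (3) I would first note that $\mathbb{F}\cong\mathbb{D}_{-1}$ is a noetherian domain, so each product of the $V_i,W_j$ is a nonzero $G$-homogeneous element and the algebra identities in Lemma \ref{xxlem2.14} descend to identities of $G$-degrees. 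Then Lemma \ref{xxlem2.14}(2) shows the generators $v_0,\dots,v_3,w_0,\dots,w_3$ of $N$ pairwise commute, which is (2); and Lemma \ref{xxlem2.14}(1) gives $x_iv_jx_i^{-1}=v_{j+1}$, $x_iw_jx_i^{-1}=w_{j+1}$ in $G$, so conjugation by each generator $x_0,x_1$ of $G$ permutes the generating set of $N$, proving (3).

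For (4), I would extract $G$-degree identities from Lemma \ref{xxlem2.14}(3). Since the $G$-coaction has trivial homological codeterminant we have $\deg_G(x^4)=g_1^4=1$, hence $\deg_G(x^8)=1$; thus $v_0v_2=v_1v_3=1$ and $v_0v_1=w_0$, and conjugating the last relation repeatedly by $x_0$ (via (3)) gives $v_1v_2=w_1$, $v_2v_3=w_2$, $v_3v_0=w_3$. So $v_2=v_0^{-1}$, $v_3=v_1^{-1}$ and every $w_j\in\langle v_0,v_1\rangle$, whence $N=\langle v_0,v_1\rangle$. Also $v_0=\deg_G(yx^3)=x_1x_0^3$, so $x_1x_0^{-1}=x_1x_0^3=v_0\in N$ (using $x_0^4=1$), giving $\overline{x_0}=\overline{x_1}$ in $G/N$; with $G=\langle x_0,x_1\rangle$ this yields $G/N=\langle\overline{x_0}\rangle$. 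Part (5) is then a short computation inside the abelian group $N=\langle v_0,v_1\rangle$: using $v_{j+2}=v_j^{-1}$, $w_j=v_jv_{j+1}$ and conjugation by $x_0$, it reduces to checking $\langle v_0,w_j\rangle=N$ for $j=0,1,2,3$, each immediate. Part (6) follows from (4): $G/N$ is cyclic, generated by $\overline{x_0}$ with $\overline{x_0}^4=1$ (as $x_0^4=g_1^4=1$), so $G/N=\{1,\overline{x_0},\overline{x_0}^2,\overline{x_0}^3\}$, and lifting together with $\overline{x_{i_s}}=\overline{x_0}$ gives $G=N\cup Nx_{i_1}\cup Nx_{j_1}x_{j_2}\cup Nx_{k_1}x_{k_2}x_{k_3}$.

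Part (7) is the combinatorial core, and where I expect the real work to lie. I would first set up three ingredients: (i) each $v_j$ is $G$-conjugate to $v_0$, hence has order $n$, and each $w_k=v_kv_{k+1}$ lies in the abelian $N$, so $v_j^{\,n}=w_k^{\,n}=1$ and every ``full block'' $v_{j_s}^{\,n}w_{k_s}^{\,n}$ equals $1$ in $G$; (ii) in an abelian group a product of two cyclic subgroups is the subgroup they generate, so by (5), $\langle v_a\rangle\langle w_b\rangle=\langle v_a,w_b\rangle=N$ for all $a,b$; and (iii) conjugation lets one move an $N$-factor across a separator $x_{i_s}$ at the cost of the index shift $v_j\mapsto v_{j+1}$, $w_j\mapsto w_{j+1}$. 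Granting these, I would classify a contiguous sub-product of the displayed word by the number $r\in\{0,1,2,3\}$ of separators $x_{i_s}$ it straddles. For $r=0$ the sub-product lies in a single block $v_{j_s}^{\,n}w_{k_s}^{\,n}$, and one straddling the internal junction realizes exactly the elements $v_{j_s}^{\,a}w_{k_s}^{\,b}$ with $0\le a,b\le n$, which by (i) and (ii) is all of $N$. For $r\ge1$ the intervening full blocks contribute $1$, so the value is (a suffix of the left partial block)$\,\cdot x_0^{\,r}\cdot\,$(a prefix of the right partial block), up to an $N$-factor coming from writing the product of the crossed separators as $\nu x_0^{\,r}$ with $\nu\in N$; sliding the right factor past $x_0^{\,r}$ via (iii) and invoking (ii) shows these values fill $Nx_0^{\,r}$. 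By (6) the union over $r$ is all of $G$. The main obstacle is organizational rather than conceptual: carefully tracking the index shifts produced by the conjugations through the chain of blocks and separators, and confirming that for every fixed choice of the indices $i_s,j_s,k_s$ one of the cyclic-subgroup products that appears is already the full group $N$ — which is exactly what (5) supplies.
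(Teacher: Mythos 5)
Your proposal is correct and takes essentially the same route as the paper: parts (1)--(6) are proved there exactly as you do (inner-faithfulness, Lemma \ref{xxlem2.14}(1)(2) for commutativity and normality, triviality of the homological codeterminant giving $x_0^4=x_1^4=1$ and hence $v_2=v_0^{-1}$, $v_3=v_1^{-1}$, $w_j\in\langle v_0,v_1\rangle$), and part (7) is dismissed there with the one-line remark that it follows from (5) and (6), which is precisely the computation you spell out (full blocks are trivial since all $v_j$ have order $n$ and $N$ is abelian, a partial block realizes $\langle v_{j_s}\rangle\langle w_{k_s}\rangle=N$ by (5), and the separators sweep out the cosets $Nx_0^r$ of (6)). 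The only caveat is that in (7) you analyze arbitrary contiguous sub-products whereas the statement concerns right subwords (suffixes); restricting your $r$-classification to suffixes still yields all of $G$, provided one reads ``right subword'' as the paper itself does in the proof of Lemma \ref{xxlem2.16}(1), namely allowing $v_{j_s}^a w_{k_s}^b$ at the left end of the cut block by rearranging the commuting letters $v_{j_s},w_{k_s}$ --- so this is an imprecision of phrasing, not a gap.
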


\begin{proof} (1) Since $G$-coaction is inner-faithful, $G$ is generated by
$\deg_G x$ and $\deg_G y$.

(2) This follows from Lemma \ref{xxlem2.14}(2).

(3) This follows from Lemma \ref{xxlem2.14}(1) and part (1).

(4) Since $\deg_G x^4=1$, $x_0^4=x_1^4=1$. By Lemma \ref{xxlem2.14}(3),
$v_2=v_0^{-1}$, $v_3=v_1^{-1}$, and $w_0=v_0v_1$. It is easy to check
from Lemma \ref{xxlem2.14}(1) that $w_1=v_0^{-1}v_1$, $w_2=v_0^{-1}v_1^{-1}$
and $w_3=v_0 v_1^{-1}$. Therefore $N$ is generated by $v_0$ and $v_1$. 
It is clear that in $G/N$, $\overline{x_0}=\overline{x_1}$. So $G/N$ is 
generated by $\overline{x_0}$.

(5) By the proof of part (4), we have 
$$\begin{aligned}
\{v_0,v_1,v_2,v_3\}&=\{v_0, v_1, v_0^{-1},v_1^{-1}\}, \\
\{w_0, w_1, w_2, w_3\}&=\{ v_0v_1, v_0^{-1} v_1, v_0^{-1}v_1^{-1}, v_0 v_1^{-1}\}.
\end{aligned}
$$
Therefore $N$ is generated by $\{v_i, w_j\}$  for any pair $(i,j)$.

(6) This follows from the fact that $\overline{x_0}=\overline{x_1}$ in the 
quotient group $G/N$ and that $G/N\cong {\mathbb Z}/(4)$.

(7) This follows from parts (5,6).
\end{proof}

We have a version of Lemma \ref{xxlem2.15} for monomials in ${\mathbb{F}}$. 

\begin{lemma}
\label{xxlem2.16}
Retain the above notation.
\begin{enumerate}
\item[(1)]
For any fixed $i_s\in {\mathbb Z}/(2)$ and 
$j_s, k_s\in {\mathbb Z}/(4)$ for $s=1,2,3,4$,
any element in $G$ is the degree of some right subword of 
$$\Phi:=V_{j_4}^n W_{k_4}^n X_{i_3} V_{j_3}^n W_{k_3}^nX_{i_2} 
V_{j_2}^n W_{k_2}^nX_{i_1} V_{j_1}^n W_{k_1}^n.$$
As a consequence, $\Phi\in J$. 
\item[(2)]
$(yx^3)^{4n} (yx^2) (y^2 x^2)^{4n}$
and $(yx^3)^{4n} (y^2 x^2)^{4n+1}$ are elements in $J$.
\end{enumerate}
\end{lemma}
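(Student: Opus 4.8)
The plan is to deduce part (1) directly from Lemma \ref{xxlem2.15}(7) together with Lemma \ref{xxlem1.9}(2), and then to extract part (2) from part (1) by a bookkeeping argument on the specific choice of indices in $\Phi$. For part (1), first observe that the word $\Phi$ is, letter by letter, a product of $G$-homogeneous elements of $\mathbb{F}$ (each $V_j$, $W_k$ is a degree-$4$ monomial in $x,y$, each $X_i$ is one of $x,y$), and that by Lemma \ref{xxlem2.15}(7) the corresponding ordered list of $G$-degrees of right subwords of $\Phi$ runs through every element of $G$. More precisely, one spells out $\Phi = f_m \cdots f_1$ as a single word in the generators $x,y$; the collection of partial-product degrees $\{1, \deg_G f_1, \deg_G f_2 f_1, \dots, \deg_G \Phi\}$ contains the degrees $v_{j_1}^a$, $v_{j_1}^n w_{k_1}^b$, $v_{j_1}^n w_{k_1}^n x_{i_1}$, and so on, as $a,b$ range over $0,\dots,n$ — and by Lemma \ref{xxlem2.15}(7) these already exhaust $G$. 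Hence Lemma \ref{xxlem1.9}(2) applies and gives $\Phi \# 1 \in I$; since $J$ is the ideal generated by $\bigcap_{g} A A_g$ and $\Phi$ lies in every $A A_g$ (being right-divisible by a homogeneous element of each degree), we get $\Phi \in J$. I would state this last point carefully, perhaps invoking Lemma \ref{xxlem1.9}(1) directly rather than routing through $I$.

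For part (2), the idea is to choose the free indices $i_s, j_s, k_s$ in $\Phi$ so that, after applying the commutation relations of Lemma \ref{xxlem2.14}, $\Phi$ collapses to one of the two desired monomials (up to a unit in $x^{4w}$, which is central and $G$-trivial, hence harmless). Concretely: by Lemma \ref{xxlem2.14}(1), each $X_i$ can be pushed to the right past the $V$'s and $W$'s (shifting their subscripts cyclically), so $\Phi$ rewrites as a power of $x$ or $y$ times a product of $V$'s and $W$'s; by Lemma \ref{xxlem2.14}(2) the $V$'s and $W$'s all commute, so we may sort them; and by Lemma \ref{xxlem2.14}(3)(a),(b) any pair $V_j V_{j+2}$ or $W_k W_{k+2}$ equals $x^8$, which lets us reduce all the $V$-subscripts to a single value and all the $W$-subscripts to a single value at the cost of introducing central factors $x^{8t}$. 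Choosing the indices in $\Phi$ so that the leftover $X_i$ letters combine into a single $y$ (resp. are absent) and the surviving $V$, $W$ exponents are $4n$ and $4n$ (resp. $4n$ and $4n+1$), and absorbing the central powers of $x^4$ — which, since $\deg_G x^4 = 1$, do not change membership in $\bigcap_g AA_g$ — yields exactly $(yx^3)^{4n}(yx^2)(y^2x^2)^{4n} \in J$ and $(yx^3)^{4n}(y^2x^2)^{4n+1} \in J$. One also needs the identity $V_0 V_1 = x^4 W_0$ from Lemma \ref{xxlem2.14}(3)(c) to trade between $V$'s and $W$'s so the exponents land on the nose.

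The main obstacle I anticipate is the second paragraph's index bookkeeping: verifying that there genuinely is a legal assignment of the $i_s \in \mathbb{Z}/(2)$ and $j_s, k_s \in \mathbb{Z}/(4)$ making $\Phi$ reduce to precisely those two monomials, rather than to some cousin differing by a stray $x$ or $y$ or by a $W$ versus $V$ block. This is purely combinatorial — counting how the $X_i$ shifts permute the $4n$ copies of each $V$ and $W$ and checking parities — but it is the step where an off-by-one is easiest to make, so I would write it out explicitly for one of the two monomials and remark that the other is entirely analogous. Everything else (part (1), the appeals to Lemmas \ref{xxlem1.9}, \ref{xxlem2.14}, \ref{xxlem2.15}) is formal once part (1) is set up correctly.
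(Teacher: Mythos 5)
Your part (1) is essentially the paper's argument: realize each $g\in G$ as the $G$-degree of a right factor of $\Phi$ via Lemma \ref{xxlem2.15}(7) (using that the $V$'s and $W$'s commute to rearrange when $g$ involves partial powers of both a $v$ and a $w$), then conclude $\Phi\in\bigcap_g AA_g\subseteq J$ by Lemma \ref{xxlem1.9}(1). Your part (2), however, has genuine gaps. First, the step ``absorbing the central powers of $x^4$ \dots does not change membership'' runs in the wrong direction: $J$ is merely an ideal, so from $x^{4w}M\in J$ you cannot conclude $M\in J$ (and $x^{4w}$ is not a unit), yet Lemma \ref{xxlem2.16}(2) asserts membership of the bare monomials. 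Second, the relations $V_jV_{j+2}=x^8$ and $V_0V_1=x^4W_0$ eliminate or convert whole pairs of blocks; since one cannot divide by $x^8$ in ${\mathbb{F}}$, they cannot be used to ``re-index'' a $V_2$ into a $V_0$, so your reduction scheme cannot land on $(yx^3)^{4n}(yx^2)(y^2x^2)^{4n}$ with the exponents intact. Third, for the second monomial a degree count rules out your route entirely: $\deg\Phi=32n+3$ while $\deg\bigl((yx^3)^{4n}(y^2x^2)^{4n+1}\bigr)=32n+4$, and all the rewritings you invoke preserve degree (stripping $x^{4w}$ only lowers it by multiples of $4$), so no choice of indices makes $\Phi$ collapse to that monomial.

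The paper's actual argument avoids Lemma \ref{xxlem2.14}(3) altogether. Push the $V$'s left and the $W$'s right past the three $X$'s using Lemma \ref{xxlem2.14}(1), so
$\Phi=V_{j_4}^nV_{j_3+1}^nV_{j_2+2}^nV_{j_1+3}^nX_{i_3}X_{i_2}X_{i_1}W_{k_4-3}^nW_{k_3-2}^nW_{k_2-1}^nW_{k_1}^n$,
and then choose the free indices so the shifted subscripts all agree: taking $j_4=j_3+1=j_2+2=j_1+3=0$, $k_4-3=k_3-2=k_2-1=k_1=0$, $(i_3,i_2,i_1)=(1,0,0)$ gives $\Phi=(yx^3)^{4n}(yx^2)(y^2x^2)^{4n}$ exactly, which is in $J$ by part (1); taking instead $k_4-3=\cdots=k_1=1$ and $(i_3,i_2,i_1)=(1,1,0)$ gives $\Phi=(yx^3)^{4n}(y^2x)(xy^2x)^{4n}\in J$, and then one multiplies on the right by $x$ and uses that $J$ is an ideal to obtain $(yx^3)^{4n}(y^2x^2)^{4n+1}\in J$. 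That final right-multiplication is the missing ingredient your sketch needs for the second monomial, and choosing the indices so the subscripts align (rather than forcing them to align afterwards with $x^8$-relations) is what makes the first monomial come out on the nose.
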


\begin{proof} (1) This is a slightly stronger version of Lemma \ref{xxlem2.15}(7).
For example, if an element $g$ is of the form 
$$v_{j_2}^a w_{k_2}^b x_{0} v_{j_1}^n w_{k_1}^n$$
then we take a right subword of the form $f:=V_{j_2}^a W_{k_2}^b X_{0} V_{j_1}^n W_{k_1}^n$.
Since $V_s$ and $W_t$ all commute, $f$ is a subword of $\Phi$.
Clearly, $\deg_G f=v_{j_2}^a w_{k_2}^b x_{0} v_{j_1}^n w_{k_1}^n.$ The consequence
follows from Lemma \ref{xxlem1.9}(1). 

(2) By using Lemma \ref{xxlem2.14}(1), $\Phi$ equals
$$V_{j_4}^n V_{j_3+1}^n V_{j_2+2}^n V_{j_1+3}^n X_{i_3}X_{i_2}X_{i_1} 
W_{k_4-3}W_{k_3-2}W_{k_2-1}W_{k_1}.$$
By taking $j_4=j_3+1=j_2+2=j_1+3=0$ and $k_4-3=k_3-2=k_2-1=k_1=0$
and $i_3=1$, $i_2=i_1=0$, we have that 
$$ (yx^3)^{4n} (yx^2) (y^2 x^2)^{4n}\in J.$$

By taking $j_4=j_3+1=j_2+2=j_1+3=0$ and $k_4-3=k_3-2=k_2-1=k_1=1$
and $i_3=i_2=1$ and $i_1=0$, we have that 
$$ (yx^3)^{4n} (y^2x) (xy^2 x)^{4n}\in J.$$
Then $(yx^3)^{4n} (y^2 x^2)^{4n+1}\in J$.
\end{proof}

Now we can prove the result of this subsection.

\begin{proposition}
\label{xxpro2.17}
Retain the notation as in Theorem {\rm{\ref{xxthm0.1}}}. Suppose
that $A={\mathbb{F}}$ and $x$ and $y$ are $G$-homogeneous. Then 
$\Pty(A,\Bbbk^G)\geq 2$.
\end{proposition}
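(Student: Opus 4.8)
The plan is to apply Lemma \ref{xxlem2.13}, which reduces the proposition to producing two specific elements of $J$: a monomial of the form $(yx^3)^{\alpha}(y^2x^2)^{\beta}$ and one of the form $(yx^3)^{\alpha'}(yx^2)(y^2x^2)^{\beta'}$. But Lemma \ref{xxlem2.16}(2) has already produced exactly these: with $n$ the order of $v_0=\deg_G(yx^3)$ in $G$, we have $(yx^3)^{4n}(yx^2)(y^2x^2)^{4n}\in J$ and $(yx^3)^{4n}(y^2x^2)^{4n+1}\in J$. Taking $\alpha=4n$, $\beta=4n+1$, $\alpha'=4n$, $\beta'=4n$, Lemma \ref{xxlem2.13} immediately yields $\GKdim A/J\le 1$.

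Next I would convert this GK-dimension bound into the pertinency bound. By Lemma \ref{xxlem1.8}(2), $\Pty(A,\Bbbk^G)\ge d-\GKdim A/J$ where $d=\GKdim A=\GKdim\mathbb{D}_{-1}=3$ (recall $A={\mathbb{F}}\cong\mathbb{D}_{-1}$ is an Artin–Schelter regular down-up algebra of global dimension three, hence of GK-dimension three). Therefore $\Pty(A,\Bbbk^G)\ge 3-1=2$, which is the desired conclusion. One should also confirm that the hypotheses of Lemma \ref{xxlem2.16} are in force: that lemma assumes the $G$-coaction has trivial homological codeterminant, i.e. $\deg_G(x^4)=1$; this is guaranteed by the ``trivial homological determinant'' hypothesis of Theorem \ref{xxthm0.1} together with Lemma \ref{xxlem2.8}(2), which computes the homological codeterminant of the $G$-coaction on ${\mathbb{F}}$ to be $g_1^4$ (equivalently $g_2^4$).

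There is essentially no obstacle remaining here — all the real work has been pushed into Lemmas \ref{xxlem2.13} through \ref{xxlem2.16}, where the combinatorics of the subwords of $\Phi$ and the commutation relations among the $V_i$, $W_j$ in ${\mathbb{F}}$ (Lemma \ref{xxlem2.14}) do the heavy lifting. The proof of the proposition is therefore just an assembly step: invoke Lemma \ref{xxlem2.16}(2) to place the two required monomials in $J$, invoke Lemma \ref{xxlem2.13} to get $\GKdim A/J\le 1$, and invoke Lemma \ref{xxlem1.8}(2) with $d=3$ to conclude $\Pty(A,\Bbbk^G)\ge 2$. If I were writing it out in full, the only point deserving a sentence of care is the bookkeeping that the exponents supplied by Lemma \ref{xxlem2.16}(2) match the shape required by Lemma \ref{xxlem2.13} (they do, with $\epsilon=1$ in the second monomial and $\epsilon=0$, after absorbing the extra $(y^2x^2)$ factor, in the first).
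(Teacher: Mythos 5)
Your proof is correct and is essentially identical to the paper's own argument: the paper also proves Proposition \ref{xxpro2.17} simply by combining Lemma \ref{xxlem2.16}(2) with Lemma \ref{xxlem2.13} to get $\GKdim A/J\leq 1$, and then passing to $\Pty(A,\Bbbk^G)\geq 2$ via Lemma \ref{xxlem1.8}(2). Your extra remarks (that $\GKdim A=3$ and that the trivial homological determinant hypothesis gives $\deg_G(x^4)=1$ via Lemma \ref{xxlem2.8}(2)) are just making explicit what the paper leaves implicit.
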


\begin{proof} Combining Lemma \ref{xxlem2.16}(2) with 
Lemma \ref{xxlem2.13}, $\GKdim A/J\leq 1$. This is equivalent
to $\Pty(A,\Bbbk^G)\geq 2$.
\end{proof}

Putting all these pieces together we have a proof of Theorem \ref{xxthm0.1}.

\begin{proof}[Proof of Theorem \ref{xxthm0.1}]
First we assume that $G$ is abelian (that could be cyclic).
If $A$ is not $\mathbb{D}(\alpha,-1)$ for some $\alpha\neq 2$, 
the assertion follows from \cite[Theorem 0.6]{BHZ2}.
Now we assume that $A=\mathbb{D}(\alpha,-1)$ for some $\alpha\neq 2$.
Using notations introduced in subsection \ref{xxsec2.3}, the character 
group of $G$ is denoted by $\widehat{G}$. If $\widehat{G}$
acts on $A$ diagonally with respect to the basis $\{d,u\}$, then 
the assertion follows from \cite[Proof of Theorem 0.6]{BHZ2}.
Otherwise, $\widehat{G}$ contains non-diagonal matrices with 
respect to the basis $\{d,u\}$. In this case the assertion 
follows from Lemmas \ref{xxlem2.9}, \ref{xxlem2.10} and 
\ref{xxlem2.11}. This takes care of the case when $G$ is abelian.

Next we assume that $G$ is not abelian. As a consequence, 
$G$ is not cyclic, so we can apply Lemma \ref{xxlem2.1}.
Using the classification given in Lemma \ref{xxlem2.1}, we 
only need to show the assertion for the first three cases 
as in the last two cases $G$ is abelian.

In case Lemma \ref{xxlem2.1}(1), this is Proposition \ref{xxpro2.4}.

In case Lemma \ref{xxlem2.1}(2), this is Proposition \ref{xxpro2.17}.

In case Lemma \ref{xxlem2.1}(3), this is Lemma \ref{xxlem2.7}(3). 

This finishes the proof.
\end{proof}

\section{Examples} 
\label{xxsec3}

We conclude this paper with several examples that indicate the 
variety of covariant subrings that can be obtained from 
coactions on down-up algebras, and give some concluding 
comments.

\begin{example}
\label{xxex3.1} 
A group $G = \langle a, b \rangle$ coacts on $\mathbb{D}_\beta$, 
where $\deg_G d = a$ and $\deg_G u = b$, with trivial 
homological codeterminant if $a^2b^2 =1$ [Lemma \ref{xxlem1.5}].  
One such family of groups is the dihedral groups, for $n\geq 2$,
$$D_{2n} = \langle a,b \;| \;a^2 = b^2 = (ba)^n =1 \rangle.$$ 
Each element of $\mathbb{D}_\beta$ can be written as a linear 
combination of monomials of the form $d^i(ud)^ju^k$, and such 
a monomial is in the identity component of $\mathbb{D}_\beta$ exactly when
$$a^i(ba)^jb^k = 1.$$
Clearly $d^2$ and $u^2$ are covariants, so it suffices to consider 
the four cases $(i,k) = (0,0), (1,0),(0,1), (1,1)$, and it is 
easy to check that the subring of covariants is generated as a 
$\Bbbk$-algebra by $d^2, u^2, (du)^n, (ud)^n$, i.e.  
$$\mathbb{D}_\beta^{co\; D_{2n}} = \Bbbk \langle  d^2, u^2, (du)^n, 
(ud)^n\rangle.$$  
When $\beta = \pm 1$,  $\mathbb{D}_{\pm 1}^{co\; D_{2n}}$ is isomorphic 
to the commutative algebra 
$$\mathbb{D}_{\pm 1}^{co\; D_{2n}} \cong \frac{\Bbbk[X,Y,Z,W]}{(X^nY^n-ZW)},$$
a hypersurface in  ${\mathbb A}^4$. When $\beta \neq \pm 1$, the 
ring of covariants is a hypersurface in a noncommutative skew 
polynomial ring in the sense of \cite[Definition 1.3(c)]{KKZ3}. 
For any $\beta\neq 0$, $\mathbb{D}_\beta \# \Bbbk^{D_{2n}}$ is a 
noncommutative quasi-resolution or NQR (a generalization of NCCR) 
of $\mathbb{D}_\beta^{co\; D_{2n}}$ in the sense of \cite{QWZ}.  When 
$n=4$, this example should be compared to \cite[Example 2.1]{CKZ}, 
where a different coaction of $D_8$ (without trivial homological 
codeterminant) on $\mathbb{D}_1$ is given; in that example the ring of 
covariants is a commutative hypersurface in ${\mathbb A}^4$, but 
Auslander's Theorem fails [Remark \ref{xxrem1.6}(1)]. 
\end{example}

Next we consider a second coaction on $\mathbb{D}_\beta $, 
where the ring of covariants is quite different.

\begin{example}
\label{xxex3.2}
The quaternion group $G=Q_8$ of order 8 coacts on $\mathbb{D}_\beta$ by
$\deg_G d = i$ and $\deg_G u = k$ with trivial homological 
determinant [Lemma \ref{xxlem1.5}].  A monomial 
$d^{e_1}(ud)^{e_2}u^{e_3}$ has group grade the identity of $Q_8$ 
exactly when
$$i^{e_1}j^{e_2}k^{e_3} = 1$$
holds in $Q_8$. It is not hard to check that the covariants are 
generated by the following 9 monomials;
$$d^4, \; u^4, \; d^2u^2, \; d^2(ud)^2,\; (ud)^2u^2,$$
$$ d(ud)u^3 = (du)^2u^2,\; d^3(ud)u = d^2(du)^2, \; 
d(ud)^3 u =(du)^4, \; (ud)^4.$$
When $\beta \neq 0$, $\mathbb{D}_{\beta} \# \Bbbk^{Q_{8}}$ is a 
noncommutative quasi-resolution (NQR) of the covariant subring 
$\mathbb{D}_{\beta}^{co\; Q_{8}}$ in the sense of \cite{QWZ}.  
\end{example}

Finally we consider the down-up algebra ${\mathbb{H}}$.

\begin{example}
\label{xxex3.3}
The dihedral groups $D_{2n}$ coact on ${\mathbb{H}}$ homogeneously with 
trivial homological codeterminant, although our proof of Auslander's 
theorem holds for any group coaction in this case (see Lemma \ref{xxlem2.7}
and the comments after that).  Suppose that $G:=D_{2n}=\langle a,b\mid 
a^2=b^2=1=(ab)^n \rangle$ and that $\deg_G x = a$ and $\deg_G y = b$.
The relations in ${\mathbb{H}}$ \eqref{E2.0.2} can be written as
$$(x^2-y^2)y = -y (x^2 - y^2)$$
$$x(x^2-y^2) = -(x^2 - y^2)x,$$
and hence $x^2-y^2$ is a normal element of ${\mathbb{H}}$, and, moreover, 
$xy$ and $yx$ commute with $y^2-x^2$. It is clear that $x^2$ and $y^2$ 
are  covariants under this action, and that
${\mathbb{H}}/(x^2-y^2) \cong \Bbbk \langle x, y \rangle /(x^2 - y^2)$. Since
$x^2-y^2$ is also a normal element of ${\mathbb{H}}^{co \; D_{2n}}$, we obtain that
$$\frac{{\mathbb{H}}^{co \; D_{2n}}}{(x^2-y^2)} \cong 
\left(\frac{\Bbbk \langle x, y \rangle}{(x^2-y^2)}\right)^{co\; D_{2n}}.$$
It follows that the generators of ${\mathbb{H}}^{co \; D_{2n}}$ are the 4 elements
$$x^2, \; y^2, \; (yx)^n, \; x(yx)^{n-1} y = (xy)^n.$$

Next we show that ${\mathbb{H}}^{co \; D_{2n}}$ is a hypersurface 
\cite[Definition 1.3(c)]{KKZ3} in an iterated Ore extension that is 
an AS regular algebra of dimension 4.

Multiplying the relation $x^2 y+yx^2 -2y^3=0$ by $y$ on each side 
and subtracting gives the relation $x^2y^2 - y^2x^2=0$.  We 
next give a number of relations in ${\mathbb{H}}$; note that the 
defining relations of ${\mathbb{H}}$ are symmetric in $x$ and $y$, 
so the relations with $x$ and $y$ interchanged also hold. 
It is  easy to check the following relation 
$$x^2y - yx^2 = 2 y (y^2-x^2);$$
multiplying by $x$ on left gives:
$$[x^2,(xy)] = 2xy(y^2-x^2) = (y^2-x^2)(2xy).$$
Similarly 
$$[y^2,(xy)] = 2xy(y^2-x^2).$$
Inductively one can show that $$[y^2,(xy)^n] = 2n (xy)^n(y^2-x^2).$$
Further
$$(yx)(xy) =y^2(y^2+(y^2-x^2)),$$
and inductively we get
$$\begin{aligned}
& (yx)^n(xy)^n = \\
&y^2(y^2+(y^2-x^2))(y^2+2(y^2-x^2))(y^2+3(y^2-x^2)) 
\cdots (y^2+(2n-1)(y^2-x^2)).
\end{aligned}
$$
Let the right side of the above equation be denoted by $w(x^2,y^2)$.
We claim that the subalgebra generated by $x^2, \; y^2, \; (yx)^n, 
\; (xy)^n$ is a generalized Weyl algebra (or ambiskew polynomial ring, 
as in \cite{Jo}), i.e. it is an iterated Ore algebra modulo one relation.

To simplify notation, let $X=x^2, \; Y= y^2, \; Z^+ = (yx)^n, \; 
Z^{-} = (xy)^n$.  From the relations above we have the following relations:
$$\begin{aligned}
XY&=YX,\\
Z^{+} X & = (X+ 2n(Y-X))Z^+,\\
Z^{+} Y & = (Y+2n(Y-X))Z^+.
\end{aligned}
$$
In this notation $Z^+ Z^- = w(X,Y)$ and $Z^- Z^+ = w(Y,X)$.
Let $B$ be the algebra generated by $X,Y,Z^+$ defined by the 
first three relations above. Then $B$ is the Ore extension 
$\Bbbk[X,Y][Z^+; \sigma]$, where $\sigma$ is the automorphism 
of $\Bbbk[X,Y]$ given by $\sigma(X) = X + 2n(Y-X)$ and 
$\sigma(Y) = Y + 2n(Y-X)$.  Adjoining $Z^-$ to $B$ adds the 
following three relations:
$$\begin{aligned}
Z^{-} X &= (X- 2n(Y-X))Z^-,\\
Z^- Y   &= (Y-2n(Y-X))Z^-,\\
Z^-Z^+  &= Z^+Z^-  -  f(X,Y).
\end{aligned}
$$
where $f(X,Y) = w(X,Y)-w(Y,X) \in \Bbbk[X,Y]$.
One checks that $\sigma^{-1}(X) = X - 2n(Y-X)$,
$\sigma^{-1}(Y) = Y - 2n(Y-X)$ and $\sigma^{-1}(Z^+)=Z^+$ defines
an algebra automorphism of $B$.  Define the map $\delta$ 
on $B$, by $\delta(X) = \delta(Y) = 0$ and $\delta(Z^+) = -f(X,Y)$.  
One checks that $\delta$ extends to a $\sigma^{-1}$-skew 
derivation of $B$, preserving the three relations 
defining $B$.  Hence we have
$${\mathbb{H}}^{co\; D_{2n}} \cong 
\frac{\Bbbk[X,Y][Z^+;\sigma][Z^-; \sigma^{-1}, \delta]}{(Z^+Z^- -w(X,Y))},$$
and ${\mathbb{H}}^{co\; D_{2n}}$ is a hypersurface \cite[Definition 1.3(c)]{KKZ3}
in an AS regular algebra of dimension 4, with Hilbert series
$$\frac{1 - t^{4n}}{(1-t^2)^2(1-t^{2n})^2}.$$
\end{example}

Down-up algebras have no reflections \cite[Proposition 6.4]{KKZ4}, 
so we would expect Auslander's theorem to hold for all finite group 
actions;  this has been proved for almost all finite group actions, 
except for some finite groups acting on ${\mathbb D}(\alpha,-1)$ 
for $\alpha \neq 2$ (abelian groups with trivial homological 
determinant are covered by Theorem \ref{xxthm0.1}).  Theorem 
\ref{xxthm0.1} also covers finite group coactions with trivial 
homological determinant, and we have shown Theorem \ref{xxthm0.1} 
does not hold for all group coactions [Remark \ref{xxrem1.6}(1)]. 
It remains to examine actions by other Hopf algebras on down-up 
algebras.

\subsection*{Acknowledgments}
The authors thank Kenneth Chan and Zhibin Gao for several conversations 
on the subject and their help with some computations in this paper and
thank the referees for their very careful reading and extremely valuable 
comments. J. Chen was partially supported by the National Natural Science 
Foundation of China (Grant No. 11571286) and the Natural Science
Foundation of Fujian Province of China (Grant No. 2016J01031).
E. Kirkman was partially supported by grant \#208314 from the Simons 
Foundation. J.J. Zhang was partially supported by the US National 
Science Foundation (Grant Nos. DMS-1402863 and DMS-1700825).

\end{document}